\documentclass{article}

\usepackage{amssymb,amsmath,amscd}
\usepackage[amsmath,thmmarks]{ntheorem}
\theoremseparator{.}
\theoremsymbol{\rule{1ex}{1ex}}
\theorembodyfont{\upshape}
\newtheorem{definition}{Definition}[section]
\newtheorem{remark}[definition]{Remark}

\newtheorem{example}[definition]{Example}

\newtheorem*{proof}{Proof}

\theorembodyfont{\itshape}

\newtheorem{Proposition}[definition]{Proposition}

\newtheorem{Corollary}[definition]{Corollary}
\newtheorem*{nntheorem}{Theorem}
\theoremsymbol{}
\newtheorem{lemma}[definition]{Lemma}
\newtheorem{proposition}[definition]{Proposition}
\newtheorem{theorem}[definition]{Theorem}
\newtheorem{corollary}[definition]{Corollary}

\usepackage[a4paper,hscale=0.8,vscale=0.8,hmarginratio=1:1,includehead,headheight=14pt]{geometry}
\usepackage{fancyhdr}
\pagestyle{fancy}

\fancyhead{}
\fancyfoot{}
\fancyhead[L]{\large\nouppercase \rightmark}
\fancyhead[R]{\large\thepage}
\usepackage{amssymb,amsmath,amscd,dsfont}
\usepackage[all,cmtip,line]{xy}
\usepackage{slashed}
\newcommand{\op}[1]{\ensuremath{\operatorname{#1}}}
\newcommand{\wt}[1]{\ensuremath{\widetilde{#1}}}
\newcommand{\wh}[1]{\ensuremath{\widehat{#1}}}
\newcommand{\fk}{\ensuremath{\mathfrak{k}}}
\newcommand{\fv}{\ensuremath{\mathfrak{v}}}
\newcommand{\fa}{\ensuremath{\mathfrak{a}}}
\newcommand{\fg}{\ensuremath{\mathfrak{g}}}
\newcommand{\fp}{\ensuremath{\mathfrak{p}}}
\newcommand{\fh}{\ensuremath{\mathfrak{h}}}
\newcommand{\mf}[1]{\ensuremath{\mathfrak{#1}}}
\newcommand{\bC}{\ensuremath{\mathbb{C}}}
\newcommand{\R}{\ensuremath{\mathbb{R}}}
\newcommand{\N}{\ensuremath{\mathbb{N}}}
\newcommand{\Z}{\ensuremath{\mathbb{Z}}}
\newcommand{\id}{\ensuremath{\operatorname{id}}}
\newcommand{\Map}{\ensuremath{\operatorname{Map}}}
\newcommand{\cat}[1]{\ensuremath{\boldsymbol{\operatorname{#1}}}}
\newcommand{\Hom}{\ensuremath{\operatorname{Hom}}}
\newcommand{\im}{\ensuremath{\operatorname{im}}}
\newcommand{\SO}{\ensuremath{\operatorname{SO}}}
\newcommand{\SU}{\ensuremath{\operatorname{SU}}}
\newcommand{\SL}{\ensuremath{\operatorname{SL}}}
\newcommand{\Sp}{\ensuremath{\operatorname{Sp}}}
\newcommand{\se}{\ensuremath{\nobreak\subseteq\nobreak}}
\newcommand{\from}{\ensuremath{\nobreak\colon\nobreak}}
\renewcommand{\to}{\ensuremath{\nobreak\rightarrow\nobreak}}

\bibliographystyle{new}

\usepackage{graphicx}

\usepackage{hyperref}
\hypersetup{
 urlcolor = red,
 colorlinks = true,
 linkcolor = blue,
 citecolor = blue,
 linktocpage = true,
 pdftitle = {The topological group cohomology of Lie groups},
 pdfauthor = {Christoph Wockel},
 bookmarksopen = true,
 bookmarksopenlevel = 1,
 unicode = true,
 hypertexnames =false 
}

\usepackage[all,cmtip,line]{xy} \CompileMatrices

\usepackage[notcite,notref,final]{showkeys}

\usepackage{enumitem}
\setlist[enumerate]{label={\alph*})}

\newcommand{\loc}{\ensuremath{ \op{loc}}}
\newcommand{\Lie}{\ensuremath{ \op{Lie}}}
\newcommand{\dR}{\ensuremath{ \op{dR}}}
\newcommand{\CE}{\ensuremath{ \op{CE}}}
\newcommand{\vE}{\ensuremath{ \op{vE}}}
\newcommand{\Top}{\ensuremath{ \op{top}}}

\newcommand{\SM}{\ensuremath{\op{SM}}}

\newenvironment{tabsection}{}{}

\begin{document}

\begin{flushright}
   {\sf ZMP-HH/14-1}\\
   {\sf Hamburger$\;$Beitr\"age$\;$zur$\;$Mathematik$\;$Nr.$\;$498}\\[2mm]
\end{flushright}

\title{Topological group cohomology of Lie groups and Chern-Weil theory for
compact symmetric spaces} \author{Christoph Wockel}
{\let\newpage\relax\maketitle}

\begin{abstract}
 In this paper we analyse the topological group cohomology of
 finite-dimensional Lie groups. We introduce a technique for computing it (as
 abelian groups) for torus coefficients by the naturally associated long exact
 sequence. The upshot in there is that certain morphisms in this long exact
 coefficient sequence can be accessed (at least for semi-simple Lie groups)
 very conveniently by the Chern-Weil homomorphism of the naturally associated
 compact dual symmetric space. Since the latter is very well-known, this gives
 the possibility to compute the topological group cohomology of the classical
 simple Lie groups. In addition, we establish a relation to characteristic
 classes of flat bundles.
\end{abstract}

\medskip

\textbf{Keywords:} Topological group, group cohomology, classifying space,
symmetric space, compact dual, subalgebra non-cohomologous to zero, Chern-Weil
homomorphism, flat characteristic class, bounded continuous cohomology

\medskip

\textbf{MSC:} 22E41 (primary); 20J06, 17B56, 57T15 (secondary)

\tableofcontents

\section*{Introduction} %
\label{sec:introduction}

\begin{tabsection}
 Topological group cohomology is the cohomology theory for topological groups
 that incorporates both, the algebraic and the topological structure of a
 topological group $G$ with coefficients in some topological $G$-module $A$.
 There are two obvious guesses for this, which already capture parts of the
 theory in special cases.
 
 The first one is the (singular) cohomology of the classifying space $BG$ of
 $G$. This leads to well-defined cohomology groups $H^{n}_{\pi_{1}(BG)}(BG;A)$
 for discrete coefficient groups $A$ (where $H^{n}_{\pi_{1}(BG)}$ denotes the
 cohomology of the corresponding local coefficient system on $BG$). However, if
 $A$ is non-discrete, then $H^{n}_{\pi_{1}(BG)}(BG;A)$ is not even
 well-defined, since $BG$ is only defined up to homotopy equivalence. Moreover,
 $BG$ is trivial if $G$ is contractible, so no homotopy invariant construction
 on $BG$ could capture for instance the Heisenberg group as a central extension
 of $\R\times \R$ by $U(1)$.
 
 The second obvious guess would be the cohomology of the cochain complex of
 continuos $A$-valued functions (see Section
 \ref{sec:a_recap_of_topological_group_cohmology}). We call this the van Est
 cohomology $H^{n}_{\vE}(G;A)$ of $G$, since it has first been exhaustively
 analysed (in the case of Lie groups) by van Est in the 50's and 60's. However,
 this has a reasonable interpretation as a relative derived functor only in the
 case that $A$ is a topological vector space
 \cite{HochschildMostow62Cohomology-of-Lie-groups}, and captures in this
 respect the case that is contrary to the case of discrete coefficients.
 
 The topological group cohomology interpolates between these two extreme case.
 It has first been defined by Segal and Mitchison in
 \cite{Segal70Cohomology-of-topological-groups} (see also
 \cite{Deligne74Theorie-de-Hodge.-III,Moore76Group-extensions-and-cohomology-for-locally-compact-groups.-III,Cattaneo77On-locally-continuous-cocycles,Brylinski00Differentiable-Cohomology-of-Gauge-Groups,Flach08Cohomology-of-topological-groups-with-applications-to-the-Weil-group})
 and recently been put into a unifying framework in
 \cite{WagemannWockel13A-Cocycle-Model-for-Topological-and-Lie-Group-Cohomology}.
 We denote the corresponding cohomology groups by $H^{n}(G;A)$ with no
 additional subscript. If $A=\fa/\Gamma$ for some contractible $G$-module $\fa$
 and some submodule $\Gamma$, then the topological group cohomology
 interpolates between the classifying space cohomology and the van Est
 cohomology in the sense that there is a long exact sequence
 \begin{equation}\label{eqn:intro_2}
  \cdots\to H^{n-1}(G;A)\to H^{n}_{\pi_{1}(BG)}(BG;\Gamma)\to H^{n}_{\vE}(G;\fa)\to 
  H^{n}(G;A)\to H^{n+1}_{\pi_{1}(BG)}(BG;\Gamma)\to\cdots
 \end{equation}
 (see Section \ref{sec:the_long_exact_sequence}).
 
 The bulk of this paper is devoted to analyse this exact sequence in the case
 that $G$ is a Lie groups and that the coefficients are smooth. In particular,
 we establish a connection to Lie algebra cohomology that we then exploit in
 the sequel to calculate certain important morphisms of the above sequence in
 explicit terms. This then permits to calculate $H^{n}(G;U(1))$ for some (in
 principle all) semi-simple Lie groups in terms of the (well-known) Chern-Weil
 homomorphism of compact symmetric spaces. Moreover, we establish a connection
 to characteristic classes of flat bundles.
 
 \medskip
 
 We now shortly list the results of the individual sections. Section
 \ref{sec:a_recap_of_topological_group_cohmology} recalls the basic facts about
 topological group cohomology. In Section \ref{sec:the_long_exact_sequence} we
 introduce the long exact sequence \eqref{eqn:intro_2} and reinterpret it in
 terms of relative group cohomology. In particular, we will motivate why it is
 natural to think of the morphisms
 \begin{equation}\label{eqn:intro_1}
  \varepsilon^{n}\from  H^{n}_{\pi_{1}(BG)}(BG;\Gamma)\to H^{n}_{\vE}(G;\fa)
 \end{equation}
 as connecting morphisms (instead of
 $H^{n-1}(G;A)\to H^{n}_{\pi_{1}(BG)}(BG;\Gamma)$). Since these morphisms play
 a distinguished r\^ole for the whole theory we call them \emph{characteristic
 morphisms}\footnote{See Remark \ref{rem:flat_characteristic_classes} for the
 interpretation in terms of flat characteristic classes and the relation to the
 characteristic morphisms $H^{n}_{\Lie}((\fg,K);\R)\to H^{n}_{\op{gp}}(G,\R)$
 from \cite{Morita01Geometry-of-characteristic-classes}.}. Note that both sides
 of \eqref{eqn:intro_1} are well-known in many cases, so the question arises
 whether $\varepsilon^{n}$ also has a known interpretation.
 
 In Section \ref{sec:the_relation_to_relative_lie_algebra_cohomology} we
 establish the relation to Lie algebra cohomology. Those cohomology classes
 which have trivial Lie algebra cohomology classes have a natural
 interpretation as flat bundles (or higher bundles, such as bundle gerbes).
 This gives in particular rise to the interpretation of the image of
 $\varepsilon^{n}$ as flat characteristic classes.
 
 Section \ref{sec:subalgebras_non_cohomologous_to_zero} then treats the case in
 which all characteristic morphisms vanish. This condition can be checked very
 conveniently for semi-simple Lie groups, since there it can be read off the
 associated compact dual $G_{u}/K$ of the non-compact symmetric space $G/K$
 naturally associated to $G$ (where $K\leq G$ is a maximal compact subgroup).
 In the case that all characteristic morphisms vanish the cohomology groups
 $H^{n}(G;U(1))$ may be computed as
 \begin{equation*}
  H^{n}(G;\fa/\Gamma)\cong H^{n}_{\Lie}((\fg,K),\fa)\oplus H^{n+1}_{\pi_{1}(BG)}(BG;\Gamma).
 \end{equation*}
 
 In Section \ref{sec:semi_simple_lie_groups} we then show that the
 characteristic homomorphisms $\varepsilon^{n}$ can be computed in terms of the
 compact dual $G_{u}/K$. More precisely, let $f\from G_{u}/K\to BK$ be a
 classifying map for the principal $K$-bundle $G_{u}\to G_{u}/K$ and let
 $j\from \Gamma\to \fa$ denote the inclusion. The main result of Section
 \ref{sec:semi_simple_lie_groups} is then the following
 
 \begin{nntheorem}
  Suppose $G$ is a semi-simple Lie group that acts trivially on $\fa$ and
  suppose $\Gamma\leq \fa$ is discrete. Then there exist isomorphisms
  $ H^{n}(G;\Gamma)\xrightarrow{~\cong~}H^{n}_{\Top}(BK;\Gamma)$ and
  $H^{n}(G;\fa)\xrightarrow{~\cong~}H^{n}_{\Top}(G_{u}/K;\fa)$ such that the
  diagram
  \begin{equation*}
   \xymatrix{
   H^{n}(G;\Gamma)\ar[rr]^{\varepsilon^{n}}\ar[d]^{\cong}&&H^{n}(G;\fa)\ar[d]^{\cong}\\
   H^{n}_{\Top}(BK;\Gamma)\ar[r]^{j_{*}}&H^{n}_{\Top}(BK;\fa)\ar[r]^{f^{*}}&H^{n}_{\Top}(G_{u}/K;\fa)
   }
  \end{equation*}
  commutes.
 \end{nntheorem}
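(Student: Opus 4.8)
The plan is to leverage the long exact sequence \eqref{eqn:intro_2} together with two vanishing/comparison facts that should, at this point, be available: first, that for a semi-simple Lie group $G$ the van Est cohomology $H^n_{\vE}(G;\fa)$ with trivial action is naturally isomorphic to the invariant Lie algebra cohomology $H^n_{\Lie}((\fg,K);\fa)$, which by the classical van Est theorem and the compact-dual construction is isomorphic to the de Rham (hence singular, real) cohomology $H^n_{\Top}(G_u/K;\fa)$; and second, that $H^n_{\pi_1(BG)}(BG;\Gamma)$ for the Lie group $G$ with its maximal compact $K\leq G$ agrees with $H^n_{\Top}(BK;\Gamma)$, using that $BG\simeq BK$ (since $G/K$ is contractible) and that the $\pi_1$-action is trivial here because $\Gamma$ is discrete and $K$ connected. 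So the first step is to assemble these two identifications as the vertical isomorphisms in the diagram, being careful that they are the \emph{same} isomorphisms used to phrase the long exact sequence, i.e. that they are natural in the coefficients.

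The second step is to identify the characteristic morphism $\varepsilon^n\from H^n(G;\Gamma)\to H^n(G;\fa)$ with the composite along the bottom. Here I would use the description of $\varepsilon^n$ from Section \ref{sec:the_relation_to_relative_lie_algebra_cohomology}: a class in $H^n_{\pi_1(BG)}(BG;\Gamma)\cong H^n_{\Top}(BK;\Gamma)$ is first pushed forward along $j\from\Gamma\to\fa$ to a class in $H^n_{\Top}(BK;\fa)$, and then the characteristic morphism is exactly the map that compares this class on $BK$ with its image under van Est. Under the identification $H^n_{\vE}(G;\fa)\cong H^n_{\Top}(G_u/K;\fa)$, restricting a class on $BK$ along the classifying map $f\from G_u/K\to BK$ of the bundle $G_u\to G_u/K$ is precisely the operation $f^*$. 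The content to verify is thus naturality: that the square built from the coefficient inclusion $j$ and from $f^*$ is compatible with the van Est map at the cochain (or at least derived) level. I would do this by writing down an explicit simplicial or cochain-level model — the bar resolution for $H^*(G;-)$, the simplicial de Rham complex of $BG$ realized via $BK$, and the Chevalley–Eilenberg complex — and tracing a cocycle through; the van Est integration map $\int_{\Delta^n}$ is the natural transformation that makes the relevant square commute on the nose.

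The third step is bookkeeping: the long exact sequence \eqref{eqn:intro_2} places $\varepsilon^n$ as the connecting-type morphism $H^n_{\pi_1(BG)}(BG;\Gamma)\to H^n_{\vE}(G;\fa)$, and the claim is just that under the two vertical isomorphisms this becomes $f^*\circ j_*$. Given steps one and two, this is immediate, and one also reads off the asserted isomorphisms $H^n(G;\Gamma)\xrightarrow{\cong}H^n_{\Top}(BK;\Gamma)$ (the term $H^n(G;\Gamma)$ in the sequence for \emph{discrete} $\Gamma$ coincides with $H^n_{\pi_1(BG)}(BG;\Gamma)$ since there is no continuous part) and $H^n(G;\fa)\xrightarrow{\cong}H^n_{\Top}(G_u/K;\fa)$ (for a \emph{contractible} vector-space module $\fa$ the classifying-space term in \eqref{eqn:intro_2} vanishes and $H^n(G;\fa)\cong H^n_{\vE}(G;\fa)$).

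The main obstacle I anticipate is step two — pinning down that the characteristic morphism $\varepsilon^n$, as defined abstractly through the long exact sequence, really is computed by $f^*\circ j_*$ at the level of explicit representatives. The subtlety is that $\varepsilon^n$ is a \emph{connecting} morphism, so its value on a class is obtained by a zig-zag (lift along a surjection of complexes, apply a differential, pull back), and one must show this zig-zag is homotopic to the geometric composite "include coefficients, then restrict along $f$". The cleanest route is probably to exhibit a single bicomplex (or a map of short exact sequences of complexes) in which the simplicial-space model of $BG$, the van Est complex, and the Chevalley–Eilenberg complex all sit compatibly — essentially the comparison already set up in Sections \ref{sec:the_long_exact_sequence} and \ref{sec:the_relation_to_relative_lie_algebra_cohomology} — and then invoke naturality of connecting homomorphisms with respect to the morphism of coefficient sequences induced by $j\from\Gamma\hookrightarrow\fa$. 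Once that functoriality is in place the rest is formal.
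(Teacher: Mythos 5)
The vertical identifications you assemble (steps one and three) are indeed the ones the paper uses, but your step two — which is the entire content of the theorem — rests on two misconceptions. First, $\varepsilon^{n}$ is not a connecting homomorphism computed by a zig-zag: by definition it is simply $j_{*}$, the map induced by the coefficient inclusion $\Gamma\hookrightarrow\fa$ (the paper only remarks that, after reindexing, it is \emph{natural to think of it} as a connecting map). So ``naturality of connecting homomorphisms with respect to the morphism of coefficient sequences'' is not the relevant tool; the question is what the plain coefficient-induced map $j_{*}$ becomes after two highly non-formal identifications.

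Second, and more seriously, the claim that a cochain-level trace through a bicomplex, with the van Est integration map, makes the decisive square commute ``on the nose'' is exactly the hard point and is not a formal statement. The identification $H^{n}(G;\fa)\cong H^{n}_{\Top}(G_{u}/K;\fa)$ passes through the duality from $\fg$ to $\fg_{u}$ (same underlying $\fk$-module, sign-changed bracket on $\fp$), and the map $f$ classifies the bundle $G_{u}\to G_{u}/K$ living entirely on the compact side; there is no map from the $G$-cochain or $BG$ side to $G_{u}/K$ along which one could ``restrict'', so $\varepsilon^{n}=f^{*}\circ j_{*}$ is a theorem, not a definitional unwinding. The paper's argument needs all of the following: reduction to even degree via Hopf's theorem (both occurrences of $j_{*}$ factor through $H^{\op{odd}}_{\Top}(BK;\fa)=0$); the universal Chern--Weil isomorphism $\Hom_{\R}(S^{m}\fk,\fa)^{\fk}\cong H^{2m}_{\Top}(BK;\fa)$, so that only invariant polynomials need to be tracked; the key commutativity of the square comparing $\flat^{2m}\circ (Bi^{*})^{-1}\circ\wt{\op{CW}}^{m}$ with $p^{*}\circ (D^{2m})^{-1}\circ\op{CW}^{m}_{(\fg,\fk)}$, which is obtained from Dupont's simplicial Chern--Weil theory together with the existence of a cocompact lattice (Borel) and Blanc's injectivity of restriction to it — this is where the nontrivial comparison between classifying-space (flat) classes and van Est classes actually happens; and finally the observation that the algebraic Chern--Weil homomorphisms for $(\fg,\fk)$ and $(\fg_{u},\fk)$ literally coincide on cochains (they only involve the projections onto $\fk$ and $\fp$), together with the standard factorization of the Chern--Weil homomorphism of $G_{u}\to G_{u}/K$ through $f^{*}$. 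None of these ingredients appears in your plan, and without them (or an equivalent input such as Dupont's curvature-and-characteristic-classes theorem plus an injectivity statement for the comparison with discrete group cohomology) the central square cannot be verified by a formal diagram chase.
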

 Since $j_{*}$ and $f^{*}$ can be computed explicitly by the Chern-Weil
 homomorphism of $G_{u}\to G_{u}/K$, the preceding theorem gives a very good
 control on the long exact sequence \eqref{eqn:intro_2}. In particular, it
 gives a good control on which classes in $H^{n}(G;\fa)$ are flat.
 
 In Section \ref{sec:examples} we then treat some examples. One of the perhaps
 most interesting consequences is that $\varepsilon^{2q}$ does not vanish on
 the Euler class
 $E_{q}\in H^{2q}(\SL_{2q}(\R);\Z)\cong H^{2q}_{\Top}(B\SO_{2q};\Z)$, and thus
 $\varepsilon^{2q}(E_{q})$ yields a flat characteristic class.
 
 \medskip
 
 This brings us to an analysis of the results obtained in this paper. At first,
 the computational results obtained in Section \ref{sec:examples} and the
 connection to the Chern-Weil homomorphism of compact symmetric spaces is new.
 The flatness of the Euler class is of course not new (cf.\
 \cite{Milnor58On-the-existence-of-a-connection-with-curvature-zero,Dupont78Curvature-and-characteristic-classes}),
 but what is new is the perspective on this phenomena that topological group
 cohomology yields. Moreover, the perspective to the Chern-Weil homomorphism as
 a push-forward in coefficients within the same theory seems to be new. In
 particular, this gives a conceptual interpretation of many seemingly ad-hoc
 relations between the cohomology of classifying spaces and the van Est
 cohomology (see
 \cite{08Guidos-book-of-conjectures,Karlsson04Characteristic-Classes-and-Bounded-Cohomology,Dupont79Bounds-for-characteristic-numbers-of-flat-bundles,Dupont76Simplicial-de-Rham-cohomology-and-characteristic-classes-of-flat-bundles}).
 
 Since the aforementioned flatness of the Euler class and the relation between
 the cohomology of classifying spaces and the van Est cohomology all occur
 naturally in the context of bounded continuous cohomology (see also
 \cite{Monod06An-invitation-to-bounded-cohomology,Monod01Continuous-bounded-cohomology-of-locally-compact-groups}),
 this suggest that there is a close relation between topological group
 cohomology and bounded continuous cohomology. We expect that a further
 analysis of the techniques presented in this paper might also lead to new
 applications and insights there.
\end{tabsection}

\section*{Acknowledgements}

The author thanks the Max Planck Institute for Mathematics in Bonn for
hospitality, where a major part of the research of the present paper was done.
Moreover, we thank Georgios Chatzigiannis for the initial discussions that lead
to the motivating questions for the paper and for proofreading early versions
of it. Thanks go also to Friedrich Wagemann for intensive discussions about the
principal topics of the paper. We also thank Thomas Nikolaus and Jim Stasheff
for some enlightening discussions about the r\^ole of classifying spaces and
various other cohomology theories associated to them and the relation to
topological group cohomology.

\section{A recap of topological group cohomology} %
\label{sec:a_recap_of_topological_group_cohmology}

\begin{tabsection}
 The purpose of this section is to fix notation and to introduce concepts. More
 detailed expositions can be found in
 \cite{WagemannWockel13A-Cocycle-Model-for-Topological-and-Lie-Group-Cohomology,Guichardet80Cohomologie-des-groupes-topologiques-et-des-algebres-de-Lie,Segal70Cohomology-of-topological-groups,HochschildMostow62Cohomology-of-Lie-groups}.
 Throughout this section, $G$ is an arbitrary topological group\footnote{With
 this we mean a group object in the category of compactly generated Hausdorff
 spaces, i.e., we endow products with the compactly generated product
 topology.} and $A$ a topological $G$-module. By a topological $G$-module we
 mean a locally contractible topological abelian group $A$ that is a $G$-module
 such that the action map $G\times A\to A$ is continuous. A short exact
 sequence $A\to B\to C$ of topological $G$-modules is defined to be a sequence
 of topological $G$-modules such that $B$ is a principal $A$-bundle over $C$.
 The sequence $A\to B\xrightarrow{~q~} C$ is said to be topologically trivial
 if the principal bundle is trivial, i.e., if there exists $\sigma\from C\to B$
 continuous such that $q \circ \sigma=\id_{C}$. Moreover, let $G$ act
 continuously from the left on a space $X$ (in case $X=G$ we will always
 consider the action by left multiplication). Then we endow $\Map(X,A)$
 (arbitrary set maps for the moment) with the left action of $G$ given by
 $(g.f)(x):=g.(f(g^{-1}.x))$.\\
 
 Now there are several cohomology groups associated to this setting:
 
 \begin{enumerate}
  \item The \emph{van Est cohomology}
        \begin{equation*}
         H^{n}_{\vE}(X;A):=H^{n}(C^{0}_{\vE}(X,A)^{G}\xrightarrow{~d~} C^{1}_{\vE}(X,A)^{G}\xrightarrow{~d~}\cdots)
        \end{equation*}
        with
        \begin{equation}\label{eqn:AS_differential}
         C^{n}_{\vE}(X,A):=C(X^{n+1},A)\quad\text{ and }\quad  
         df(g_{0},...,g_{n+1}):=\sum_{i=0}^{n+1}(-1)^{i}f(g_{0},...,\wh{g_{i}},...,g_{n+1}).
        \end{equation}
        If $X=G$, then we obtain the van Est cohomology $H^{n}_{\vE}(G;A)$
        (which is called $H^{n}_{\op{glob,c}}(G;A)$ in
        \cite{WagemannWockel13A-Cocycle-Model-for-Topological-and-Lie-Group-Cohomology}\footnote{To
        match up with
        \cite{WagemannWockel13A-Cocycle-Model-for-Topological-and-Lie-Group-Cohomology}
        one has to pass from the homogeneous cochain complex to the
        inhomogeneous one, i.e., identify $\Map(G^{n+1},A)^{G}$ with
        $\Map(G^{n},A)$ via $f\mapsto F$ with
        $F(g_{1},...,g_{n}):=F(1,g_{1},g_{1} g_{2},...,g_{1}\cdots g_{n})$ (see
        \cite[Section I.5]{Brown94Cohomology-of-groups},
        \cite[n\textsuperscript{o}
        I.3.1]{Guichardet80Cohomologie-des-groupes-topologiques-et-des-algebres-de-Lie}
        or \cite[Appendix
        B]{Neeb04Abelian-extensions-of-infinite-dimensional-Lie-groups}).}).
        If, moreover, $G$ is a Lie group, $A$ is a smooth $G$-module, $X$ is a
        manifold and the action is smooth, then we also have the corresponding
        smooth version
        \begin{equation*}
         H^{n}_{\vE,s}(X;A):=H^{n}(C^{0}_{\vE,s}(X;A)^{G}\xrightarrow{~d~}
         C^{1}_{\vE,s}(X,A))^{G}\xrightarrow{~d~}\cdots)
        \end{equation*}
        with $C^{n}_{\vE,s}(X,A):=C^{\infty}(X^{n+1},A)$\footnote{In the smooth
        category we endow products with the usual product smooth structure}.
        Note that if $G$ is a finite-dimensional Lie group and $A=\fa$ is a
        smooth and quasi-complete locally convex $G$-module, then by
        \cite[Theorem 5.1]{HochschildMostow62Cohomology-of-Lie-groups} the
        inclusion $C^{n}_{\vE,s}(G,\fa)\hookrightarrow C^{n}_{\vE}(G,\fa)$
        induces an isomorphism $H^{n}_{\vE,s}(G;\fa)\cong H^{n}_{\vE}(G;\fa)$.
  \item \label{item:def_SM} The \emph{Segal-Mitchison cohomology} (for
        simplicity we only consider the case $X=G$)
        \begin{equation*}
         H^{n}_{\SM}(G;A):=H^{n}(C(G,EA)^{G}\xrightarrow{~d~}C(G,B_{G}A)^{G}\xrightarrow{~d~}\cdots),
        \end{equation*}
        where $B_{G}A:=C(G,EA)/A$ and $EA$ is a chosen model for the universal
        bundle of the topological abelian group $A$ such that $EA\to BA$ admits
        a local section \cite[Appendix
        A]{Segal70Cohomology-of-topological-groups}. If $A$ is contractible,
        then we may assume that $EA=A$ and thus one sees that
        $H^{n}_{\SM}(G;A)\cong H^{n}_{\vE}(G;A)$ in this case \cite[Proposition
        3.1]{Segal70Cohomology-of-topological-groups}. On the other hand, if
        $A=A^{\delta}$ is discrete, then \cite[Proposition
        3.3]{Segal70Cohomology-of-topological-groups} shows that
        $H^{n}_{\SM}(G;A)\cong H^{n}_{\pi_{1}(BG)}(BG;A)$ (where
        $BG:=|BG_{\bullet}|$ is the classifying space of $G$ and
        $H^{n}_{\pi_{1}(BG)}$ denotes the sheaf cohomology of the local system
        of the $\pi_{1}(BG)\cong \pi_{0}(G)$-action on the discrete group $A$).
  \item The \emph{locally continuous cohomology}
        \begin{equation*}
         H^{n}_{\loc}(X;A):=H^{n}(C^{0}_{\loc}(X,A)^{G}\xrightarrow{~d~} C^{1}_{\loc}(X,A)^{G}\xrightarrow{~d~}\cdots ),
        \end{equation*}
        where
        \begin{equation*}
         C^{n}_{\loc}(X,A):=\{f\from X^{n+1}\to A\mid f\text{ is continuous on some neighbourhood of the diagonal }\Delta^{n+1}X \}.
        \end{equation*}
        By abuse of notation we sometimes refer to the elements of
        $C^{n}_{\loc}(X,A)$ as locally continuous maps or cochains. Again, if
        $G=X$, then we obtain the locally continuous cohomology
        $H^{n}_{\loc}(G;A)$. We have a natural morphism
        $H^{n}_{\vE}(X;A)\to H^{n}_{\loc}(X;A)$ induced from the inclusion
        $C^{n}_{\vE}(X,A)\hookrightarrow C^{n}_{\loc}(X,A)$. Note that this is
        an isomorphism if either $X$ is contractible \cite[Theorem
        5.16]{Fuchssteiner11A-Spectral-Sequence-Connecting-Continuous-With-Locally-Continuous-Group-Cohomology}
        or if $X=G$ is metrisable and $A$ is contractible by group
        homomorphisms \cite[Proposition
        3.6]{FuchssteinerWockel11Topological-Group-Cohomology-with-Loop-Contractible-Coefficients}.
        
        If, moreover, $G$ is a Lie group, $A$ is a smooth $G$-module, $X$ is a
        manifold and the action is smooth, then we also have the corresponding
        smooth version
        \begin{equation*}
         H^{n}_{\loc,s}(X;A):=H^{n}(C^{0}_{\loc,s}(X,A)^{G}\xrightarrow{~d~} C^{1}_{\loc,s}(X,A)^{G}\xrightarrow{~d~}\cdots ),
        \end{equation*}
        where
        \begin{equation*}
         C^{n}_{\loc,s}(X,A):=\{f\from X^{n+1}\to A\mid f\text{ is smooth on some neighbourhood of the diagonal } \Delta^{n+1}X\}.
        \end{equation*}
        By abuse of notation we sometimes refer to the elements of
        $C^{n}_{\loc,s}(X,A)$ as locally smooth maps or cochains. Again, if
        $G=X$, then we obtain locally smooth cohomology $H^{n}_{\loc,s}(G;A)$
        of $G$ considered in
        \cite{WagemannWockel13A-Cocycle-Model-for-Topological-and-Lie-Group-Cohomology}.
        If we assume, furthermore, that $G$ is finite-dimensional and $\fa$ is
        quasi-complete, then the inclusion
        $C^{n}_{\loc,s}(G,A)\hookrightarrow C^{n}_{\loc}(G,A)$ is a
        quasi-isomorphism, i.e., induces an isomorphism in cohomology
        $H^{n}_{\loc,s}(G;A)\cong H^{n}_{\loc}(G;A)$ \cite[Proposition
        I.7]{WagemannWockel13A-Cocycle-Model-for-Topological-and-Lie-Group-Cohomology}.
        We will often identify $H^{n}_{\loc,s}(G;A)$ with $H^{n}_{\loc}(G;A)$
        via this identification.
 \end{enumerate}
 
 If $A\to B\to C$ is a short exact sequence of topological $G$-modules, then we
 have long exact sequences
 \begin{equation*}
  \cdots \to   H^{n-1}_{\SM}(G;C)\xrightarrow{\delta^{n-1}}  H^{n}_{\SM}(G;A)\to  H^{n}_{\SM}(G;B)\to   H^{n}_{\SM}(G;C)\xrightarrow{\delta^{n}}   H^{n}_{\SM}(G;A)\to\cdots
 \end{equation*}
 and
 \begin{equation*}
  \cdots \to   H^{n-1}_{\loc}(G;C)\xrightarrow{\delta^{n-1}}  H^{n}_{\loc}(G;A)\to  H^{n}_{\loc}(G;B)\to   H^{n}_{\loc}(G;C)\xrightarrow{\delta^{n}}   H^{n}_{\loc}(G;A)\to\cdots
 \end{equation*}
 (cf.\ \cite[Proposition 2.3]{Segal70Cohomology-of-topological-groups} and
 \cite[Remark
 I.2]{WagemannWockel13A-Cocycle-Model-for-Topological-and-Lie-Group-Cohomology}).
 These long exact sequences are natural with respect to morphisms of short
 exact sequences (cf.\ \cite[Section
 VI]{WagemannWockel13A-Cocycle-Model-for-Topological-and-Lie-Group-Cohomology}).
 Since $H^{n}_{\SM}(G;A)$ and $H^{n}_{\loc}(G;A)$ coincide for (loop)
 contractible $A$ with $H^{n}_{\vE}(G;A)$ (cf.\ \cite[Proposition
 3.1]{Segal70Cohomology-of-topological-groups} and
 \cite{FuchssteinerWockel11Topological-Group-Cohomology-with-Loop-Contractible-Coefficients}),
 this implies that we have isomorphisms of $\delta$-functors (cf.\
 \cite[Section
 VI]{WagemannWockel13A-Cocycle-Model-for-Topological-and-Lie-Group-Cohomology})
 \begin{equation}\label{eqn:isomorphism_btw_SM_and_loc}
  H^{n}_{\SM}(G;A)\cong H^{n}_{\loc}(G;A)
 \end{equation}
 (under the additional assumption that the product topology on $G^{n}$ is
 compactly generated, see \cite[Corollary
 IV.8]{WagemannWockel13A-Cocycle-Model-for-Topological-and-Lie-Group-Cohomology}).
 The same argument shows that the Segal-Mitchison and the locally continuous
 cohomology coincides (under some mild additional assumptions) with many other
 cohomology theories for topological groups, as for instance the simplicial
 group cohomology from
 \cite{Deligne74Theorie-de-Hodge.-III,Brylinski00Differentiable-Cohomology-of-Gauge-Groups}
 (see \cite[Corollary
 IV.7]{WagemannWockel13A-Cocycle-Model-for-Topological-and-Lie-Group-Cohomology}),
 the measurable group cohomology from
 \cite{Moore76Group-extensions-and-cohomology-for-locally-compact-groups.-III}
 (see \cite[Remark
 IV.13]{WagemannWockel13A-Cocycle-Model-for-Topological-and-Lie-Group-Cohomology})
 and the cohomology groups from
 \cite{Flach08Cohomology-of-topological-groups-with-applications-to-the-Weil-group}
 (see \cite[Remark
 IV.12]{WagemannWockel13A-Cocycle-Model-for-Topological-and-Lie-Group-Cohomology}).
 We believe that this is the ``correct'' notion of a cohomology theory for
 topological groups and thus call it the \emph{topological group cohomology}.
 In case that we do not refer to a specific cocycle model we will just denote
 it by $H^{n}(G;A)$.\\
 
 Note that the argument leading to the isomorphism
 $H^{n}_{\SM}(G;A)\cong H^{n}_{\loc}(G;A)$ does \emph{not} show that the
 topological group cohomology is isomorphic to the van Est cohomology, since for
 the van Est cohomology we only have a long exact sequence
 \begin{equation*}
  \cdots \to   H^{n-1}_{\vE}(G;C)\xrightarrow{\delta^{n-1}}  H^{n}_{\vE}(G;A)\to  H^{n}_{\vE}(G;B)\to   H^{n}_{\vE}(G;C)\xrightarrow{\delta^{n}}   H^{n}_{\vE}(G;A)\to\cdots
 \end{equation*}
 if the short exact sequence $A\to B\to C$ is \emph{topologically trivial}.
\end{tabsection}

\begin{remark}
 The functors $H^{n}_{\vE}$, $H^{n}_{\SM}$ and $H^{n}_{\loc}$ are also natural
 in the first argument in the sense that a continuous morphism
 $\varphi\from H\to G$ induces morphisms
 $\varphi^{*}\from H^{n}(G;A)\to H^{n}(H;\varphi^{*}A)$, where $\varphi^{*}A$
 denotes the pull-back module. Indeed, $\varphi$ induces morphisms of cochain
 complexes
 \begin{equation*}
  \varphi^{*}\from C(G^{n+1},A)^{G}\to C(H^{n+1},\varphi^{*}A)^{H}
  \quad \text{ and }\quad
  \varphi^{*}\from C_{\loc}^{n}(G,A)^{G}\to C(H,\varphi^{*}A)^{H},\quad f \mapsto f \circ \varphi,
 \end{equation*}
 which induce morphisms
 $\varphi^{*}\from H^{n}_{\vE}(G;A)\to H^{n}_{\vE}(H;\varphi^{*}A)$ and
 $\varphi^{*}\from H^{n}_{\loc}(G;A)\to H^{n}_{\loc}(H;\varphi^{*}A)$ in
 cohomology.
 
 The morphisms for $H^{n}_{\SM}$ are induced as follows. Recall from
 \cite[Appendix A]{Segal70Cohomology-of-topological-groups} that the module
 structure on $EA$ is induced from the action of the simplicial topological
 group $G_{\bullet}$ (i.e., $G_{n}:=G$) on the abelian simplicial topological
 group $EA_{\bullet}$ (i.e., $EA_{n}:=A^{n+1}$) via the diagonal action of $G$
 on $A^{n+1}$. In the geometric realisation we thus obtain a map
 \begin{equation*}
  G\times EA = |G_{\bullet}|\times |EA_{\bullet}| \cong  |G_{\bullet}\times EA_{\bullet}|\to |EA_{\bullet}|=EA
 \end{equation*}
 defining the module structure. From this one sees that we have
 $\varphi^{*}EA=E \varphi^{*}A$. We now observe that a morphism
 $\psi\from \varphi^{*} A\to C$ induces a morphism
 \begin{equation*}
  E_{\varphi,\psi}\from \varphi^{*}  C(G,EA)\to C(H,EC),\quad
  f \mapsto E(\psi) \circ f \circ \varphi.
 \end{equation*}
 Since $E_{\varphi,\psi}$ preserves the constant maps it induces a morphism
 \begin{equation*}
  B_{\varphi,\psi}\from \varphi^{*} B_{G}A \to B_{H}C
 \end{equation*}
 Inductively we obtain morphisms
 $B_{\varphi,\psi}^{n}\from \varphi^{*} B_{G}^{n}A\to B_{H}^{n}C$ and thus
 morphisms
 \begin{equation*}
  E_{\varphi,B_{\varphi,\psi}^{n}}\from \varphi^{*}  C(G,EB_{G}^{n}A)\to C(H,EB_{H}^{n}C),\quad f\mapsto E(B_{\varphi,\psi}^{n}) \circ f \circ \varphi.
 \end{equation*}
 In particular, if we set $C=\varphi^{*}A$, then we have morphisms of cochain
 complexes
 \begin{equation*}
  \xymatrix{  
  \cdots\ar[r] & C(G,EB_{G}^{n}A)^{G}\ar[r]\ar[d] & C(G,EB_{G}^{n+1}A)^{G} \ar[r]\ar[d] & \cdots\\
  \cdots\ar[r] & C(H,EB_{H}^{n}\varphi^{*} A)^{H}\ar[r] & C(H,EB_{H}^{n+1}\varphi^{*} A)^{H} \ar[r] & \cdots
  }
 \end{equation*}
 that induce morphisms
 $\varphi^{*}\from H^{n}_{\SM}(G;A)\to H^{n}_{\SM}(H;\varphi^{*}A)$ in
 cohomology.
 
 Obviously, if $\alpha\from A\to D$ is a morphism of topological $G$-modules,
 then we get a morphism $\varphi^{*}\alpha\from \varphi^{*}A\to \varphi^{*}D$
 and the diagram
 \begin{equation*}
  \xymatrix{
  H^{n}(G;A)\ar[rr]^{\alpha_{*}} \ar[d]^{\varphi^{*}} && H^{n}(G;D)\ar[d]^{\varphi^{*}}\\
  H^{n}(H;\varphi^{*} A)\ar[rr]^{\alpha_{*}}  && H^{n}(H;\varphi^{*}D)\\
  }
 \end{equation*}
 commutes.
\end{remark}

\begin{proposition}\label{prop:isomorphism_btw_SM_and_loc_is_natural_in_first_argument}
 The isomorphisms $H^{n}_{\SM}(G;A)\cong H^{n}_{\loc}(G;A)$ from
 \eqref{eqn:isomorphism_btw_SM_and_loc} are natural with respect to
 $\varphi^{*}$, i.e., if $\varphi\from H\to G$ is a morphisms of topological
 groups, then the diagram
 \begin{equation}\label{eqn:isomorphism_btw_SM_and_loc_is_natural_in_first_argument}
  \xymatrix{
  H^{n}_{\SM}(G;A)\ar[r]^{\cong}\ar[d]^{\varphi^{*}} & H^{n}_{\loc}(G;A)\ar[d]^{\varphi^{*}}\\
  H^{n}_{\SM}(H;\varphi^{*}A) \ar[r]^{\cong} & H^{n}_{\loc}(H;\varphi^{*}A)
  }
 \end{equation}
 commutes for each $n\in\N_{0}$.
\end{proposition}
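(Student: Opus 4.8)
The plan is to observe that both sides of \eqref{eqn:isomorphism_btw_SM_and_loc_is_natural_in_first_argument} are morphisms of $\delta$-functors out of the \emph{universal} $\delta$-functor $H^{*}_{\SM}(G;-)$, so that it suffices to compare them in degree $0$, where the statement is trivial.

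Fix a morphism $\varphi\from H\to G$ of topological groups. First note that $\varphi^{*}$ carries a short exact sequence $A\to B\to C$ of topological $G$-modules to a short exact sequence $\varphi^{*}A\to\varphi^{*}B\to\varphi^{*}C$ of topological $H$-modules, since the underlying principal bundle is unchanged and only the group action is restricted along $\varphi$. The one point that requires verification is that, for $\bullet\in\{\SM,\loc\}$, the maps $\varphi^{*}\from H^{n}_{\bullet}(G;-)\to H^{n}_{\bullet}(H;\varphi^{*}(-))$ of the preceding Remark commute with the connecting homomorphisms of these short exact sequences, i.e.\ constitute a morphism of $\delta$-functors on the category of topological $G$-modules. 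For $H^{*}_{\loc}$ this is immediate, since $\varphi^{*}$ is induced by the cochain-level map $f\mapsto f\circ\varphi$ and the connecting homomorphism is the usual one associated to a short exact sequence of cochain complexes; for $H^{*}_{\SM}$ one has to chase the maps $E_{\varphi,\psi}$ and $B^{n}_{\varphi,\psi}$ of the Remark through the defining resolution $A\to C(G,EA)\to C(G,EB_{G}A)\to\cdots$ and check compatibility with its differentials. This is really the only non-formal step, and it is the point I expect to require the most care.

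Granting this, recall that $H^{*}_{\SM}(G;-)$, and likewise $H^{*}_{\SM}(H;-)$, is a universal (effaceable) $\delta$-functor by Segal--Mitchison \cite{Segal70Cohomology-of-topological-groups}, and that the isomorphism \eqref{eqn:isomorphism_btw_SM_and_loc} is by construction the unique morphism of $\delta$-functors $H^{*}_{\SM}(G;-)\to H^{*}_{\loc}(G;-)$ restricting to the identity of $H^{0}_{\SM}(G;A)=A^{G}=H^{0}_{\loc}(G;A)$ in degree $0$. Now both composites
\begin{equation*}
 H^{n}_{\SM}(G;A)\xrightarrow{~\cong~}H^{n}_{\loc}(G;A)\xrightarrow{~\varphi^{*}~}H^{n}_{\loc}(H;\varphi^{*}A)
\end{equation*}
and
\begin{equation*}
 H^{n}_{\SM}(G;A)\xrightarrow{~\varphi^{*}~}H^{n}_{\SM}(H;\varphi^{*}A)\xrightarrow{~\cong~}H^{n}_{\loc}(H;\varphi^{*}A)
\end{equation*}
are, by the previous paragraph, morphisms of $\delta$-functors from $H^{*}_{\SM}(G;-)$ to the $\delta$-functor $A\mapsto H^{*}_{\loc}(H;\varphi^{*}A)$ on topological $G$-modules. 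Since $H^{*}_{\SM}(G;-)$ is universal, such a morphism is determined by its degree-$0$ component, so the two composites coincide once they agree in degree $0$. In degree $0$, however, all groups involved are modules of invariants, both isomorphisms are the identity, and both instances of $\varphi^{*}$ are the canonical inclusion $A^{G}\hookrightarrow(\varphi^{*}A)^{H}$; hence the two composites agree, and therefore \eqref{eqn:isomorphism_btw_SM_and_loc_is_natural_in_first_argument} commutes in every degree.
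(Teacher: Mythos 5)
Your proof is correct and follows essentially the same route as the paper: the paper also regards $A\mapsto H^{n}_{\SM}(G;A)$, $A\mapsto H^{n}_{\SM}(H;\varphi^{*}A)$, $A\mapsto H^{n}_{\loc}(G;A)$, $A\mapsto H^{n}_{\loc}(H;\varphi^{*}A)$ as $\delta$-functors on $\cat{G-Mod}$, notes that $\varphi^{*}$ is a morphism of $\delta$-functors, and uses the vanishing $H^{n}_{\SM}(G;E_{G}B_{G}^{n}A)=H^{n}_{\loc}(G;E_{G}B_{G}^{n}A)=0$ (i.e.\ effaceability/universality, via \cite[Theorem VI.2]{WagemannWockel13A-Cocycle-Model-for-Topological-and-Lie-Group-Cohomology}) to reduce to the trivial degree-$0$ check, exactly as you do.
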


\begin{proof}
 We have the following $\delta$-functors (cf.\ \cite[Section
 VI]{WagemannWockel13A-Cocycle-Model-for-Topological-and-Lie-Group-Cohomology})
 \begin{equation*}
  \cat{G-Mod}\to\cat{Ab},\quad
  A\mapsto H^{n}_{\SM}(G;A),\quad A\mapsto H^{n}_{\SM}(H;\varphi^{*}A),\quad A\mapsto H^{n}_{\loc}(G;A),\quad 
  A\mapsto H^{n}_{\loc}(H;\varphi^{*}A).
 \end{equation*}
 Observe that $\varphi^{*}$ constitute morphisms of $\delta$-functors. Since
 \begin{equation*}
  H^{n}_{\SM}(G;E_{G}B_{G}^{n}A)=H^{n}_{\loc}(G;E_{G}B_{G}^{n}A)=0
 \end{equation*}
 it suffices by \cite[Theorem
 VI.2]{WagemannWockel13A-Cocycle-Model-for-Topological-and-Lie-Group-Cohomology}
 to observe that
 \eqref{eqn:isomorphism_btw_SM_and_loc_is_natural_in_first_argument} commutes
 for $n=0$. The latter is trivial.
\end{proof}

\begin{remark}
 One relation that we obtain from the above is in the case that the $G$-action
 is also continuous for the discrete topology $A^{\delta}$ on $A$ (this happens
 for instance if $G$ is locally contractible and $G_{0}$ acts trivially). Then
 we have isomorphisms
 \begin{equation*}
  \zeta^{n}\from H^{n}_{\pi_{1}(BG)}(BG;A^{\delta})\xrightarrow{~\cong~}H^{n}_{SM}(G;A^{\delta})
 \end{equation*}
 (cf.\ Remark in \S 3 of \cite{Segal70Cohomology-of-topological-groups}). If we
 identify $H^{n}_{SM}(G;A^{\delta})$ with $H^{n}_{\loc}(G;A^{\delta})$, then
 the morphism $A^{\delta}\to A$ induces a morphism
 \begin{equation*}
  \flat^{n}\from  H^{n}_{\pi_{1}(BG)}(BG;A)\to H^{n}_{\loc}(G;A),
 \end{equation*}
 which is of course an isomorphism if $A$ already is discrete. On the other
 hand, if $G$ is discrete, then $\flat^{n}$ is the well-known isomorphism
 $H^{n}_{\op{gp}}(G;A)\cong H^{n}_{\pi_{1}(BG)}(BG;A)$
 \cite{Brown94Cohomology-of-groups}. Here $H^{n}_{\op{gp}}(G;A)$ is the group
 cohomology of the abstract group $G$ with coefficients in $A$, which coincides
 (literally at the cochain level) with $H^{n}_{\loc}(G^{\delta};A^{\delta})$.
 From the explicit description of $\zeta^{n}$ it follows that $\zeta^{n}$ and
 $\flat^{n}$ are natural with respect to morphisms of groups and of
 coefficients, i.e., if $\varphi\from H\to G$ is a morphism of topological
 groups and $\alpha\from A\to D$ is a morphism of topological $G$-modules, then
 the diagrams
 \begin{equation*}
  \vcenter{  \xymatrix{
  H^{n}_{\pi_{1}(BG)}(BG;A)\ar[d]^{\alpha_{*}}\ar[r]^(.58){{\flat}^{n}} & H^{n}_{\loc}(G;A)\ar[d]^{\alpha_{*}}\\
  H^{n}_{\pi_{1}(BG)}(BG;D)\ar[r]^(.58){{\flat}^{n}} & H^{n}_{\loc}(G;D)
  }}\quad\text{ and }\quad
  \vcenter{  \xymatrix{
  H^{n}_{\pi_{1}(BG)}(BG;A)\ar[d]^{\varphi^{*}}\ar[r]^(.58){{\flat}^{n}} & H^{n}_{\loc}(G;A)\ar[d]^{\varphi^{*}}\\
  H^{n}_{\pi_{1}(BH)}(BH;\varphi^{*}A)\ar[r]^(.58){{\flat}^{n}} & H^{n}_{\loc}(H;A)
  }}
 \end{equation*}
 commute (and likewise for $\zeta^{n}$). If $G$ is a finite-dimensional Lie
 group and $A=\fa/\Gamma$ for $\fa$ locally convex and quasi-complete, then we
 may interpret $\flat^{n}$ as a natural morphism to $H^{n}_{\loc,s}(G;A)$ via
 the identification $H^{n}_{\loc}(G;A)\cong H^{n}_{\loc,s}(G;A)$.
\end{remark}

We will follow the convention that we denote morphism in cohomology that are
induced by morphisms of groups, spaces or coefficient modules by upper and
lower stars. Morphisms that are induced by manipulations of cochains will be
denoted by the corresponding cohomology index. If we use the upper star as the
index of cohomology groups when referring to the whole cohomology algebra,
instead to a single abelian group in one specific degree (and a plus there
refers to the cohomology in positive degree). For the convenience of the
reader, we collect here the definitions of the cohomology groups and some of
their chain complexes that we will use throughout (in the order in which they
appear in the text):

\medskip

\begin{tabular}{r|r|l}
 $H^{n}_{\vE}(X,A)$ & $C^{n}_{\vE}(X,A)^{G}$ & $C(X^{n+1},A)^{G}$\\[.2em]
 $H^{n}_{\vE,s}(X,A)$ &	$C^{n}_{\vE,s}(X,A)^{G}$ & $C^{\infty}(X^{n+1},A)^{G}$\\[.2em]
 $H^{n}_{\SM}(G,A)$ && see \ref{item:def_SM} above\\[.2em]
 $H^{n}_{\pi_{1}(BG)}(BG;A)$ & & see \ref{item:def_SM} above \\[.2em]
 $H^{n}_{\loc}(X,A)$ &	$C^{n}_{\loc}(X,A)^{G}$ & $\{f\from X^{n+1}\to A\mid f\text{ is cont.\ on some neighbh.\ of }\Delta^{n+1}X\}^{G}$\\[.2em]
 $H^{n}_{\loc,s}(X,A)$ &	$C^{n}_{\loc,s}(X,A)^{G}$ & $\{f\from X^{n+1}\to A\mid f\text{ is smooth on some neighbh.\ of }\Delta^{n+1}X\}^{G}$\\[.2em]
$H^{n}(G;A)$ & & any of $H^{n}_{\SM}(G;A)$, $H^{n}_{\loc}(G;A)$ (or $H^{n}_{\loc,s}(G;A)$)\\[.2em]
$H^{n}_{\Top}(X;A)$ & & cohomology of the (underlying) topological space $X$ with coeff. \\ &&~\hfill in the (abstract) abelian group $A$\\[.2em]
 $H^{n}_{\Lie}((\fg,\fk);\fa)$ &	$C_{\CE}^{n}((\fg,\fk),\fa)$ & $\Hom(\Lambda^{n}\fg/\fk,\fa)^{\fk}$\\[.2em]
 $H^{n}_{\Lie}((\fg,K);\fa)$ &  $C_{\CE}^{n}((\fg,K),\fa)$ & $\Hom(\Lambda^{n}\fg/\fk,\fa)^{K}$\\[.2em]
 $H^{n}_{\loc,s}(G;A)$ &  $\wt{C}_{\loc,s}^{n}(G,A)$ & $\{f\from G^{n}\to A\mid f\text{ is smooth on some id. neighbh.}\}$\\[.2em]
 $H^{n}_{\loc,s}(G;A)$ &  $\wt{C}_{\loc,s}^{0,n}(G,A)$ & $\{f\in \wt{C}_{\loc,s}^{n}(G,A)\mid
  f(g_{1},...,g_{n})=0 \text{ if }g_{i}=1\text{ for some }i\}$\\[.2em]
 $H^{n}_{\loc,s}((G,K);A)$ &	$\wt{C}_{\loc,s}((G,K),A)$ & $\{f\in C^{n}_{\loc,s}(G,A)\mid
  f(g_{1},...,g_{n})=k_{0}.f(k_{0}^{-1}g_{1}k_{1},...,k_{n-1}^{-1}g_{n}k_{n})$\\
 & &~\hfill$\text{ for all }g_{1},...,g_{n}\in G,k_{0},...,k_{n}\in K\}$

\end{tabular}

\section{The long exact sequence and the characteristic morphisms} %
\label{sec:the_long_exact_sequence}

\begin{tabsection}
 In this section we analyse the long exact sequence in topological group
 cohomology for torus (or more generally $K(\Gamma,1)$) coefficients. We will
 try to motivate why it is a good idea to look at this sequence. The general
 assumptions in this section are as in the previous one. We only assume, in
 addition, that the coefficient module is $A=\fa/\Gamma$ for some contractible
 $G$-module $\fa$ and a discrete submodule $\Gamma\leq\fa$. Recall that
 $H^{n}(G;A)$ always refers to the topological group cohomology of $G$ with
 coefficients in $A$, which can be realised by the models $H^{n}_{\SM}(G;A)$ or
 $H^{n}_{\loc}(G;A)$.
\end{tabsection}
 
\begin{remark}\label{rem:long_exact_sequence}
 The exact coefficient sequence $\Gamma\to \fa\to A$ induces a long exact
 sequence
 \begin{equation}\label{eqn:long_exact_sequence_1}
  \cdots\to H^{n-1}(G;A)\to H^{n}(G;\Gamma)\to H^{n}(G;\fa)\to H^{n}(G;A)\to H^{n+1}(G;\Gamma)\to\cdots.
 \end{equation}
 As described in the previous section we have isomorphisms
 \begin{equation*}
  H^{n}(G;\fa)\cong H^{n}_{\vE}(G;\fa)
  \quad\text{ and }\quad  H^{n}(G;\Gamma)\cong H^{n}_{\pi_{1}(BG)}(BG;\Gamma).
 \end{equation*}
 This leads to a long exact sequence
 \begin{equation}\label{eqn:long_exact_sequence_2}
  \cdots\to H^{n-1}(G;A)\to H^{n}_{\pi_{1}(BG)}(BG;\Gamma)\to H^{n}_{\vE}(G;\fa)\to 
  H^{n}(G;A)\to H^{n+1}_{\pi_{1}(BG)}(BG;\Gamma)\to\cdots.
 \end{equation}
\end{remark}

\begin{tabsection}
 From the long exact sequence above, certain morphisms will turn out to be
 particularly important. We give them a distinguished name.
\end{tabsection}

\begin{definition}
 The morphisms $\varepsilon^{n}\from H^{n}(G;\Gamma)\to H^{n}(G;\fa)$, induced
 by the inclusion $\Gamma\hookrightarrow \fa $ will be called
 \emph{characteristic morphisms} in the sequel.
\end{definition}

\begin{tabsection}
 Note that in the theory of flat characteristic classes there is also the
 notion of \emph{characteristic morphism} (see \cite[Section
 2.3]{Morita01Geometry-of-characteristic-classes}). Our characteristic morphism
 is a refinement of the one occurring there that is more sensitive to the
 underlying topological information (see Remark
 \ref{rem:flat_characteristic_classes}). In particular, the characteristic
 morphisms in \cite[Section 2.3]{Morita01Geometry-of-characteristic-classes}
 are likely to be injective \cite[Theorem
 2.22]{Morita01Geometry-of-characteristic-classes}. In contrast to this we note
 that from the vanishing of $H^{n}_{\vE}(G;\fa)\cong H^{n}(G;\fa)$ for compact
 Lie groups and quasi-complete $\fa$
 \cite{HochschildMostow62Cohomology-of-Lie-groups} and the contractibility of
 $BG$ for contractible $G$ we have the following vanishing result for the
 characteristic morphism in the topologically trivial situations:
\end{tabsection}

\begin{lemma}
 The characteristic morphisms vanish if either $G$ is contractible or if $G$ is
 a compact Lie group and $\fa$ is quasi-complete and locally convex.
\end{lemma}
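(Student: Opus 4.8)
The plan is to reduce the statement, in each of the two cases, to the vanishing in positive degrees of one of the two groups linked by $\varepsilon^{n}$. First I would recall from Remark \ref{rem:long_exact_sequence} that, via the isomorphisms $H^{n}(G;\Gamma)\cong H^{n}_{\pi_{1}(BG)}(BG;\Gamma)$ and $H^{n}(G;\fa)\cong H^{n}_{\vE}(G;\fa)$ (the second one being available precisely because $\fa$ is a contractible $G$-module), the characteristic morphism $\varepsilon^{n}$ is identified with the corresponding arrow $H^{n}_{\pi_{1}(BG)}(BG;\Gamma)\to H^{n}_{\vE}(G;\fa)$ of the long exact sequence \eqref{eqn:long_exact_sequence_2}. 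So it suffices to show that for every $n\geq 1$ either the source or the target of this arrow is zero (the assertion being understood in positive degrees, since $\varepsilon^{0}$ is the inclusion $\Gamma^{G}\hookrightarrow\fa^{G}$, which need not vanish).

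If $G$ is contractible, I would argue that $BG$ is weakly contractible: the fibration $G\to EG\to BG$ together with the contractibility of $EG$ gives $\pi_{k}(BG)\cong\pi_{k-1}(G)=0$ for all $k\geq 1$. In particular $\pi_{1}(BG)$ is trivial, so the local coefficient system on $BG$ is constant and $H^{n}_{\pi_{1}(BG)}(BG;\Gamma)\cong H^{n}_{\Top}(BG;\Gamma)=H^{n}_{\Top}(\{*\};\Gamma)=0$ for all $n\geq 1$. Hence the source of $\varepsilon^{n}$ vanishes and $\varepsilon^{n}=0$.

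If $G$ is a compact Lie group and $\fa$ is quasi-complete and locally convex, then I would invoke \cite{HochschildMostow62Cohomology-of-Lie-groups}: there $H^{n}_{\vE}(G;\fa)\cong H^{n}(G;\fa)=0$ for all $n\geq 1$, the point being that averaging cochains over $G$ against the normalised Haar measure produces a contracting homotopy in positive degrees, and that the hypothesis on $\fa$ guarantees the relevant $\fa$-valued integrals exist (and, by the same reference, that the continuous and the smooth van Est models agree). Hence the target of $\varepsilon^{n}$ vanishes and $\varepsilon^{n}=0$.

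I do not expect a genuine obstacle here; the statement is essentially a corollary of the inputs already assembled in Section \ref{sec:a_recap_of_topological_group_cohmology} and in Remark \ref{rem:long_exact_sequence}. The only points needing a little care are bookkeeping: that $\fa$ being a contractible $G$-module is exactly what makes $H^{n}(G;\fa)\cong H^{n}_{\vE}(G;\fa)$ legitimate, and that ``quasi-complete and locally convex'' is precisely the hypothesis under which \cite{HochschildMostow62Cohomology-of-Lie-groups} yields the vanishing. Beyond recording these and splitting into the two cases, nothing further is required.
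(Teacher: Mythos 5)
Your proof is correct and follows essentially the same route as the paper, which deduces the lemma from the contractibility of $BG$ (killing the source $H^{n}_{\pi_{1}(BG)}(BG;\Gamma)$ in positive degrees) in the first case and from the Hochschild--Mostow vanishing of $H^{n}_{\vE}(G;\fa)\cong H^{n}(G;\fa)$ for compact $G$ and quasi-complete locally convex $\fa$ in the second. Your remark that the statement is to be read in positive degrees (since $\varepsilon^{0}$ is the inclusion $\Gamma^{G}\hookrightarrow\fa^{G}$) is a sensible clarification consistent with the paper's intent.
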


\begin{tabsection}
 This suggest that the characteristic morphisms are likely to vanish. We will
 show in the sequel that this is often the case and, if not, is the source of
 interesting geometric structure in terms of flat characteristic classes (cf.\
 Remark \ref{rem:flat_characteristic_classes} and Section \ref{sec:examples}).
 What we will show in the remainder of this section is that it is appropriate
 to think of the characteristic morphisms as some kind of connecting morphisms.
\end{tabsection}

\begin{proposition}\label{prop:van_Est_cohomology_isomorphism_in_coefficients}
 Suppose $G$ is 1-connected. If $q\from \fa\to A=\fa/\Gamma$ denotes the
 quotient morphism, then
 \begin{equation}\label{eqn:quotient_iso}
  q_{*}\from H^{n}_{\vE}(G;\fa)\to H^{n}_{\vE}(G;A)
 \end{equation}
 is an isomorphism for $n\geq 1$.
\end{proposition}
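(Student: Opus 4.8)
The plan is to realise $q_{*}$ as the map induced on cohomology by a short exact sequence of van Est cochain complexes, and then to feed in the vanishing of $H^{n}_{\vE}(G;\Gamma)$ in positive degrees.

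First I would establish that vanishing. Since $G$ is connected and $\Gamma$ carries the discrete (subspace) topology, for each $\gamma\in\Gamma$ the orbit map $g\mapsto g.\gamma$ is a continuous map $G\to\Gamma$, hence constant; so the $G$-action on $\Gamma$ is trivial. Likewise every continuous map $G^{n+1}\to\Gamma$ is constant, as $G^{n+1}$ is connected. Therefore $C^{n}_{\vE}(G,\Gamma)^{G}\cong\Gamma$ for all $n$, with $d$ the simplicial coboundary restricted to constant cochains, so $C^{\bullet}_{\vE}(G,\Gamma)^{G}$ is the cochain complex computing the singular cohomology of a point with coefficients in $\Gamma$. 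Hence $H^{0}_{\vE}(G;\Gamma)\cong\Gamma$ and $H^{n}_{\vE}(G;\Gamma)=0$ for all $n\geq 1$.

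Next I would produce the short exact sequence of complexes
\[
0\longrightarrow C^{\bullet}_{\vE}(G,\Gamma)^{G}\longrightarrow C^{\bullet}_{\vE}(G,\fa)^{G}\xrightarrow{~q_{*}~} C^{\bullet}_{\vE}(G,A)^{G}\longrightarrow 0 .
\]
Here $q_{*}$ is post-composition with $q$; it commutes with the differential \eqref{eqn:AS_differential} because $q$ is a module homomorphism, and its kernel in each degree is exactly $C^{n}_{\vE}(G,\Gamma)^{G}$ since $\ker(q\from\fa\to A)=\Gamma$. The only nontrivial point is surjectivity of $q_{*}$ in each degree. For this I would pass to the inhomogeneous model: restriction of a homogeneous cochain to the slice $\{1\}\times G^{n}\cong G^{n}$ identifies $C^{n}_{\vE}(G,A)^{G}$ with $C(G^{n},A)$, and likewise for $\fa$, compatibly with $q_{*}$. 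Now $q\from\fa\to A$ is a covering map (it is a principal bundle for the discrete group $\Gamma$), and $G^{n}$ is simply connected (and locally path-connected) because $G$ is $1$-connected; so the covering-space lifting criterion provides, for every $\psi\in C(G^{n},A)$, a continuous lift $\tilde\psi\in C(G^{n},\fa)$ with $q\circ\tilde\psi=\psi$. The homogeneous $G$-invariant cochain corresponding to $\tilde\psi$ is then a preimage of the given cochain under $q_{*}$, since slice-restriction is injective on $G$-invariant cochains.

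Finally, the long exact cohomology sequence of this short exact sequence is
\[
\cdots\to H^{n}_{\vE}(G;\Gamma)\to H^{n}_{\vE}(G;\fa)\xrightarrow{~q_{*}~}H^{n}_{\vE}(G;A)\to H^{n+1}_{\vE}(G;\Gamma)\to\cdots ,
\]
and since $H^{m}_{\vE}(G;\Gamma)=0$ for every $m\geq 1$, both outer terms vanish whenever $n\geq 1$, so $q_{*}$ is an isomorphism there (for $n=0$ one only gets surjectivity, consistent with the statement). I expect the main obstacle to be precisely the cochain-level surjectivity of $q_{*}$, i.e.\ producing a \emph{continuous and $G$-equivariant} lift: the reduction to the inhomogeneous model is what makes the simple connectivity of $G$ usable (one needs $G^{n}$, not $G^{n+1}$, to be simply connected), and this is where $1$-connectedness is genuinely used. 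A harmless technical caveat is that the lifting theorem requires $G^{n}$ to be locally path-connected, which holds in the settings relevant here.
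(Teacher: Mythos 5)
Your argument is correct, but it is organised differently from the paper's. The paper proves the statement directly at the level of (inhomogeneous) cocycles: after normalising via the dual Dold--Kan correspondence so that $f(1,\ldots,1)=0$, it lifts a cocycle $f\from G^{n}\to A$ through the covering $q\from \fa\to A$ using 1-connectedness of $G^{n}$, and then uses the \emph{uniqueness} of based lifts to conclude $d\wt{f}=0$; injectivity is handled analogously by lifting a cobounding cochain. You instead lift \emph{all} cochains, obtaining degreewise surjectivity of $q_{*}$ and hence a short exact sequence of complexes $0\to C^{\bullet}_{\vE}(G,\Gamma)^{G}\to C^{\bullet}_{\vE}(G,\fa)^{G}\to C^{\bullet}_{\vE}(G,A)^{G}\to 0$, compute $H^{n}_{\vE}(G;\Gamma)$ ($\Gamma$ for $n=0$, zero for $n\geq 1$, since continuous maps from the connected spaces $G^{n+1}$ to the discrete module $\Gamma$ are constant and the $G$-action on $\Gamma$ is trivial), and conclude by the long exact sequence. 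Both proofs hinge on exactly the same covering-space lifting over the simply connected spaces $G^{n}$; what your version adds is the conceptual point, which the paper only makes negatively at the end of Section \ref{sec:a_recap_of_topological_group_cohmology}, that the van Est long exact coefficient sequence --- in general only available for topologically trivial coefficient sequences --- does exist for $\Gamma\to\fa\to A$ once $G$ is 1-connected, the proposition then being the vanishing of the $\Gamma$-terms in positive degrees. The paper's direct argument is a bit more economical (it needs neither cochain-level surjectivity nor the computation of $H^{*}_{\vE}(G;\Gamma)$) and makes the role of uniqueness of lifts explicit. Your caveat about local path-connectedness of $G^{n}$ in the lifting theorem applies equally to the paper's own lifting step, so it is not a gap relative to the paper.
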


\begin{proof}
 We first show surjectivity. Let $f\from G^{n}\to A$ be continuous and satisfy
 $d f=0$. By the dual Dold-Kan correspondence we may assume without loss of
 generality that $f(1,...,1)=0$. Since $G$ is 1-connected, there exists a
 unique continuous $\wt{f}\from G^{n}\to \fa$ such that $\wt{f}(1,...,1)=0$
 and $q \circ \wt{f}=f$. Thus $q \circ d \wt{f}=d (q \circ \wt{f})=d f=0$ and
 since $d \wt{f}$ is uniquely determined by $q \circ d \wt{f}=0$ and
 $d \wt{f}(1,...,1)=0$ is follows that $d \wt{f}=0$. Thus
 \eqref{eqn:quotient_iso} is surjective.
 
 Injectivity is argued similarly. If $q \circ \wt{f}=d b$ for some continuous
 $b\from G^{n-1}\to A$, then we can lift $b$ to some continuous
 $\wt{b}\from G^{n-1}\to \fa$. Making the appropriate assumptions on the values
 in $(1,...,1)$, one can adjust things so that $d \wt{b}=\wt{f}$ and conclude
 that \eqref{eqn:quotient_iso} is injective.
\end{proof}

\begin{corollary}
 If $G$ is 1-connected, then the natural morphism
 $H^{n}_{\vE}(G;A)\to H^{n}_{\loc}(G;A)$ fits into the long exact sequence
 \begin{equation*}
  \cdots\to H^{n-1}_{\loc}(G;A)\to H^{n}_{\pi_{1}(BG)}(BG;\Gamma)\to H^{n}_{\vE}(G;A)\to 
  H^{n}_{\loc}(G;A)\to H^{n+1}_{\pi_{1}(BG)}(BG;\Gamma)\to\cdots.
 \end{equation*}
 In particular, $H^{n}_{\vE}(G;A)\cong H^{n}_{\loc}(G;A)$ if $G$ is
 contractible (the latter is a specialisation of \cite[Theorem
 5.16]{Fuchssteiner11A-Spectral-Sequence-Connecting-Continuous-With-Locally-Continuous-Group-Cohomology}).
\end{corollary}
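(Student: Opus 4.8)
The plan is to obtain the sequence directly from the long exact sequence \eqref{eqn:long_exact_sequence_2} of Remark \ref{rem:long_exact_sequence} by using Proposition \ref{prop:van_Est_cohomology_isomorphism_in_coefficients} to trade $H^{n}_{\vE}(G;\fa)$ for $H^{n}_{\vE}(G;A)$ in positive degrees. Realising the topological group cohomology throughout by the locally continuous model, \eqref{eqn:long_exact_sequence_2} becomes
\begin{equation*}
 \cdots\to H^{n-1}_{\loc}(G;A)\to H^{n}_{\pi_{1}(BG)}(BG;\Gamma)\to H^{n}_{\vE}(G;\fa)\to H^{n}_{\loc}(G;A)\to H^{n+1}_{\pi_{1}(BG)}(BG;\Gamma)\to\cdots,
\end{equation*}
where the arrow $H^{n}_{\vE}(G;\fa)\to H^{n}_{\loc}(G;A)$ is $q_{*}$ composed with the chosen identification $H^{n}_{\vE}(G;\fa)\cong H^{n}_{\loc}(G;\fa)$. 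Taking this identification to be (compatible with) the comparison morphism $\iota\from H^{n}_{\vE}(G;\fa)\to H^{n}_{\loc}(G;\fa)$ and using that $\iota$ is induced by the inclusion $C^{n}_{\vE}(G,\argument)\hookrightarrow C^{n}_{\loc}(G,\argument)$, on which $q_{*}$ acts by post-composition, this arrow equals $\iota\circ q_{*}$ with $q_{*}\from H^{n}_{\vE}(G;\fa)\to H^{n}_{\vE}(G;A)$ and $\iota\from H^{n}_{\vE}(G;A)\to H^{n}_{\loc}(G;A)$ the natural comparison morphism.

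By Proposition \ref{prop:van_Est_cohomology_isomorphism_in_coefficients} the map $q_{*}\from H^{n}_{\vE}(G;\fa)\to H^{n}_{\vE}(G;A)$ is an isomorphism for $n\geq 1$. Replacing, for each such $n$, the term $H^{n}_{\vE}(G;\fa)$ by $H^{n}_{\vE}(G;A)$, precomposing the incoming arrow with $q_{*}$ and leaving the outgoing arrow as the comparison morphism $\iota$ preserves exactness, since only a term has been altered by an isomorphism. For $n=1$ the substitution is not needed because $H^{1}_{\pi_{1}(BG)}(BG;\Gamma)=0$ ($G$ being connected), and for $n=0$ one has $H^{0}_{\vE}(G;A)=A^{G}=H^{0}_{\loc}(G;A)$ with $\iota$ the identity; this produces the displayed long exact sequence.

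For the final claim, if $G$ is contractible then $BG$ has vanishing homotopy groups $\pi_{k}(BG)\cong\pi_{k-1}(G)$ and the homotopy type of a CW complex, hence is contractible, so $H^{n}_{\pi_{1}(BG)}(BG;\Gamma)=0$ for $n\geq 1$. The exact sequence then collapses to $0\to H^{n}_{\vE}(G;A)\xrightarrow{~\iota~}H^{n}_{\loc}(G;A)\to 0$ for $n\geq 1$, so $\iota$ is an isomorphism, and in degree $0$ it is one trivially.

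The only point needing care is the identification of the outgoing arrow, after the substitution, with the \emph{natural} comparison morphism $H^{n}_{\vE}(G;A)\to H^{n}_{\loc}(G;A)$: this rests on the naturality of $\iota$ in the coefficient variable (clear from its definition via an inclusion of cochain complexes) together with the compatibility of the identification $H^{n}(G;\fa)\cong H^{n}_{\vE}(G;\fa)$ with $\iota$, which follows as in the proof of Proposition \ref{prop:isomorphism_btw_SM_and_loc_is_natural_in_first_argument} from their agreement in degree $0$ and uniqueness of morphisms of $\delta$-functors. Everything else is formal.
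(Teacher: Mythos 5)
Your argument is correct and is exactly the route the paper intends (the corollary is stated without proof as an immediate consequence): substitute $H^{n}_{\vE}(G;\fa)$ by $H^{n}_{\vE}(G;A)$ in the long exact sequence \eqref{eqn:long_exact_sequence_2} via the isomorphism $q_{*}$ of Proposition \ref{prop:van_Est_cohomology_isomorphism_in_coefficients}, identify the outgoing arrow with the natural comparison morphism by commutativity at the cochain level, and use contractibility of $BG$ for the final claim. Your extra care about naturality of the identifications and the low-degree terms only makes explicit what the paper leaves implicit.
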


\begin{tabsection}
 We now work towards an interpretation of the long exact sequence for
 finite-dimensional Lie groups.
\end{tabsection}

\begin{remark}
 Suppose that $K\leq G$ is a closed subgroup. Then the inclusion
 $i\from K\to G$ induces a restriction morphism
 \begin{equation*}
  i^{*}\from H^{n}(G;A)\to H^{n}(K;A)
 \end{equation*}
 (given on $H^{n}_{\loc}$ by restricting cochains to $K$, whence the name). On
 the other hand, $G$ acts on the quotient $G/K$ by left multiplication and we
 set
 \begin{equation*}
  H^{n}_{\vE}((G,K);A):=H^{n}_{\vE}(G/K,A)\quad\text{ and }\quad H^{n}_{\loc}((G,K);A):=H^{n}_{\loc}(G/K;A).
 \end{equation*}
 Note that $H^{n}_{\vE}((G,K);A)$ and $H^{n}_{\loc}((G,K);A)$ are the relative
 versions of the van Est and the locally continuous cohomology (compare to the
 relative Lie algebra cohomology in Section
 \ref{sec:the_relation_to_relative_lie_algebra_cohomology}). Since the quotient
 map $p\from G\to G/K$ is $G$-equivariant it induces morphisms in cohomology
 \begin{equation*}
  p^{*}\from  H^{n}_{\loc}((G,K);A)\to H^{n}_{\loc}(G;A),
 \end{equation*}
 given on the cochain level by $f\mapsto f \circ (p\times\cdots\times p)$.
\end{remark}

\begin{proposition}\label{prop:van_Est_relative_cohomology_isomorphism_in_coefficients}
 Suppose $K\leq G$ is a closed subgroup such that $G/K$ is 1-connected. If
 $q\from \fa\to A=\fa/\Gamma$ denotes the quotient morphism, then
 \begin{equation*}
  q_{*}\from H^{n}_{\vE}((G,K);\fa)\to H^{n}_{\vE}((G,K);A)
 \end{equation*}
 is an isomorphism for $n\geq 1$.
\end{proposition}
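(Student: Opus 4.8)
The plan is to adapt the proof of Proposition~\ref{prop:van_Est_cohomology_isomorphism_in_coefficients} almost verbatim, replacing the group $G$ by the homogeneous $G$-space $G/K$ and working in the homogeneous cochain complexes $C((G/K)^{\bullet+1},\fa)^{G}$ and $C((G/K)^{\bullet+1},A)^{G}$ that compute $H^{\bullet}_{\vE}((G,K);\fa)$ and $H^{\bullet}_{\vE}((G,K);A)$. Two structural inputs from that proof remain available. First, since $G/K$ is 1-connected so is each finite power $(G/K)^{m}$, and hence the covering $q\from\fa\to A$ (a principal bundle with discrete structure group $\Gamma$) admits a continuous lift along any map out of $(G/K)^{m}$, unique once the image of one point is prescribed. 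Second, $(G/K)^{m}$ is connected and $\Gamma$ is discrete, so a continuous $\Gamma$-valued function on it that vanishes at one point vanishes identically; this is the device that turns a lift of a cocycle into a cocycle and a lift of a coboundary relation into a coboundary relation.

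For surjectivity, let $f\in C((G/K)^{n+1},A)^{G}$ with $df=0$. By the dual Dold--Kan correspondence we may assume $f$ lies in the normalized subcomplex, so in particular $f$ vanishes on the diagonal $\Delta=\{(x,\dots,x)\}\se(G/K)^{n+1}$, which is homeomorphic to $G/K$ and hence connected. Let $\wt f\from(G/K)^{n+1}\to\fa$ be the continuous lift of $f$ with $\wt f(eK,\dots,eK)=0$; restricted to $\Delta$ it is a lift of the zero map vanishing at a point, so $\wt f|_{\Delta}=0$. The new point is that $\wt f$ is automatically $G$-invariant: for $g\in G$ the map $x\mapsto g.\wt f(g^{-1}.x)$ is again a continuous lift of $x\mapsto g.f(g^{-1}.x)=f(x)$, and since $\Delta$ is $G$-stable both maps vanish on $\Delta$, so uniqueness of lifts on the connected space $(G/K)^{n+1}$ forces them to agree. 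Finally $q\circ d\wt f=d(q\circ\wt f)=df=0$, and $d\wt f$ vanishes at $(eK,\dots,eK)$ because deleting a coordinate of a constant tuple again gives a constant tuple, where $\wt f$ vanishes; hence $d\wt f=0$, and $\wt f\in C((G/K)^{n+1},\fa)^{G}$ is a cocycle with $q_{*}[\wt f]=[f]$.

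For injectivity one argues analogously: given a cocycle in $C((G/K)^{n+1},\fa)^{G}$ whose $q_{*}$-image is a coboundary, represent it by a normalized cocycle $\wt f$ and write $q\circ\wt f=db$ with $b\in C((G/K)^{n},A)^{G}$ normalized; lift $b$ to a normalized and (by the same argument as above, when $n\geq 2$) $G$-invariant $\fa$-valued primitive $\wt b$, note that $\wt f-d\wt b$ is a continuous $\Gamma$-valued cochain that vanishes on the constant tuple $(eK,\dots,eK)$ --- this is where the hypothesis $n\geq1$ enters, so that this tuple is degenerate and normalized cochains vanish on it --- hence is identically zero, and conclude $\wt f=d\wt b$. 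The degree $n=1$ case, where the primitive lives on $G/K$ itself and carries no degeneracies, requires a small separate argument: there a normalized $1$-cocycle $\wt f$ is a coboundary of the $0$-cochain $x\mapsto\wt f(eK,x)$, and one corrects this cochain by a constant in $\fa^{K}$ to make it $G$-invariant, using that the obstruction is a continuous crossed homomorphism $G\to\fa$ vanishing on $K$.

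The one genuine subtlety, compared with the absolute case, is the $G$-invariance of the lifts. In Proposition~\ref{prop:van_Est_cohomology_isomorphism_in_coefficients} one sidesteps it by passing to the inhomogeneous complex, which carries no invariance condition; since $G/K$ is not a group this is unavailable here, and one must instead exploit the nonempty connected $G$-stable diagonal of $(G/K)^{m}$, on which normalized cochains vanish, to rigidify the lifts. I expect this --- together with the bookkeeping around the normalized subcomplex and the low-degree case --- to be where the work lies; the remaining steps are exactly as in the previous proposition.
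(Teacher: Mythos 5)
Your overall route is the same as the paper's, which disposes of this proposition by transferring the proof of Proposition \ref{prop:van_Est_cohomology_isomorphism_in_coefficients} from $G$ to $G/K$; the issue you isolate --- that no inhomogeneous complex is available for $G/K$, so the $G$-invariance of the lifted cochains has to be checked by hand --- is precisely what such a ``verbatim'' transfer glosses over, and your treatment of it (normalise via dual Dold--Kan, observe that the lift vanishes on the connected $G$-stable diagonal, then invoke uniqueness of lifts along the covering $\fa\to A$) is correct. This settles surjectivity for all $n\geq 1$ and injectivity for all $n\geq 2$ in full detail.

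The gap is in your degree-one patch. Knowing only that the obstruction $c(g):=\wt{f}(eK,gK)$ is a continuous crossed homomorphism $G\to\fa$ vanishing on $K$ does not allow you to kill it by a constant in $\fa^{K}$: such homomorphisms need not be principal at all. What you must use is the coboundary datum: with $b$ the equivariant primitive of $q\circ\wt{f}$, any lift $a_{1}\in\fa$ of $b(eK)\in A^{K}$ makes $g\mapsto c(g)-(g.a_{1}-a_{1})$ a continuous $\Gamma$-valued cocycle, hence zero on $G_{0}$; if $G$ is connected this finishes the argument, and $a_{1}\in\fa^{K}$ then comes for free from $c|_{K}=0$. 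But the hypothesis only makes $G/K$ connected, so $G=G_{0}K$ may be disconnected, and one still has to produce a point of $(a_{1}+\fa^{G_{0}})\cap\fa^{K}$, i.e.\ kill a class in $H^{1}(\pi_{0}(K);\fa^{G_{0}})$; equivalently, the equivariant $A$-valued $0$-cochain $b$ need not lift to an equivariant $\fa$-valued one. This is a genuine obstruction in the stated generality: for $G=\R\times\Z$, $K=\{0\}\times\Z$, $\fa=\R^{2}$ with $(r,n)$ acting by $(s,t)\mapsto(s+(r+n)t,\,t)$ and $\Gamma=\Z\times\{0\}$, the space $G/K\cong\R$ is $1$-connected, yet the invariant cocycle $(x_{0},x_{1})\mapsto(x_{1}-x_{0},0)$ is nontrivial over $\fa$ while over $A$ it is the coboundary of the invariant $0$-cochain $x\mapsto(x\bmod\Z,\,1)$, so $q_{*}$ is not injective on $H^{1}$ there. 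Hence your $n=1$ step cannot be completed as sketched; it does go through whenever $\pi_{0}(K)$ is finite (average the correcting constant over $\pi_{0}(K)$, using that $K_{0}\se G_{0}$ acts trivially on $\fa^{G_{0}}$), which covers the absolute proposition and every situation in which the paper later applies this one ($K$ maximal compact in a Lie group with finitely many components). You should therefore either add such a hypothesis for $n=1$ or restrict the degree-one claim accordingly --- noting that the paper's own one-line proof is silent on exactly this point.
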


\begin{proof}
 Replacing $G$ by $G/K$, the prof of Proposition
 \ref{prop:van_Est_cohomology_isomorphism_in_coefficients} carries over
 verbatim.
\end{proof}

\begin{proposition}\label{prop:relative_local_iso_van_Est}
 Suppose that $G$ is a finite-dimensional Lie group with finitely many
 components, that $K\leq G$ is a maximal compact subgroup and that $\fa$ is a
 quasi-complete locally convex space. Then
 $H^{n}_{\loc}(G;\fa)\cong H^{n}_{\loc}((G,K);A)$ for $n\geq 1$.
\end{proposition}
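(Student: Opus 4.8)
The plan is to pass from the absolute to the relative van Est cohomology, carry out the coefficient change from $\fa$ to $A$ on the relative side (where it is available because $G/K$ is simply connected), and bridge the absolute cohomology of $G$ with the relative cohomology of $(G,K)$ by averaging cochains over the compact group $K$. Throughout I use the classical structure fact that, for a finite-dimensional Lie group $G$ with finitely many components and $K\leq G$ a maximal compact subgroup, the homogeneous space $G/K$ is diffeomorphic to a Euclidean space; in particular it is smoothly contractible and $1$-connected.

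First I would dispose of the coefficient change on the relative side. Since $G/K$ is $1$-connected, Proposition~\ref{prop:van_Est_relative_cohomology_isomorphism_in_coefficients} shows that $q\from\fa\to A$ induces an isomorphism $q_{*}\from H^{n}_{\vE}((G,K);\fa)\to H^{n}_{\vE}((G,K);A)$ for $n\geq 1$. Since $G/K$ is contractible, \cite[Theorem 5.16]{Fuchssteiner11A-Spectral-Sequence-Connecting-Continuous-With-Locally-Continuous-Group-Cohomology} (applied with $X=G/K$) shows that $H^{n}_{\vE}((G,K);B)\to H^{n}_{\loc}((G,K);B)$ is an isomorphism for every coefficient module $B$, in particular for $B=\fa$ and $B=A$. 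Combining these gives $H^{n}_{\loc}((G,K);\fa)\cong H^{n}_{\loc}((G,K);A)$ for $n\geq 1$, so the proposition is reduced to producing an isomorphism $H^{n}_{\loc}(G;\fa)\cong H^{n}_{\loc}((G,K);\fa)$, now valid for all $n$.

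For the remaining isomorphism I would again invoke the two comparison theorems. Since $G$ is metrisable and $\fa$ is contractible by group homomorphisms (the scalar homotopy $h_{t}(v)=tv$, $t\in[0,1]$, is a homotopy through continuous $G$-module homomorphisms from $\id_{\fa}$ to $0$), \cite[Proposition 3.6]{FuchssteinerWockel11Topological-Group-Cohomology-with-Loop-Contractible-Coefficients} gives $H^{n}_{\loc}(G;\fa)\cong H^{n}_{\vE}(G;\fa)$, while $H^{n}_{\loc}((G,K);\fa)\cong H^{n}_{\vE}((G,K);\fa)$ by contractibility of $G/K$. Thus it suffices to show that the projection $p\from G\to G/K$ induces an isomorphism $p^{*}\from H^{n}_{\vE}((G,K);\fa)\to H^{n}_{\vE}(G;\fa)$. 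Here compactness of $K$ enters: fixing normalised Haar measure $dk$ on $K$ and using quasi-completeness of $\fa$ to integrate $\fa$-valued continuous functions over the compact groups $K^{n+1}$, set
\[
 (\op{Av}h)(g_{0},\dots,g_{n}):=\int_{K^{n+1}}h(g_{0}k_{0},\dots,g_{n}k_{n})\,dk_{0}\cdots dk_{n},\qquad h\in C(G^{n+1},\fa)^{G}.
\]
By invariance of Haar measure $\op{Av}h$ is invariant under right translation of each argument by $K$, hence descends to an element of $C((G/K)^{n+1},\fa)$; it stays left $G$-invariant because $G$ acts on $\fa$ by continuous linear maps; and $\op{Av}$ commutes with the differential, since the latter is an alternating sum of face maps each omitting one argument, whose $K$-variable then integrates out to $1$. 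As $p(g_{i}k_{i})=p(g_{i})$ one has $\op{Av}\circ p^{*}=\id$ on the relative complex, so $p^{*}$ is a split monomorphism of complexes and in particular injective in cohomology. For the other direction I would note that $p^{*}$ (as $f\mapsto f\circ(p\times\cdots\times p)$) together with $\op{Av}$ exhibits each $C((G/K)^{n+1},\fa)$ as a $G$-module retract of the induced, hence relatively injective, module $C(G^{n+1},\fa)$; therefore both augmented standard complexes $\fa\to C(G^{\bullet+1},\fa)$ and $\fa\to C((G/K)^{\bullet+1},\fa)$ are relatively injective resolutions of $\fa$ in the sense of \cite{HochschildMostow62Cohomology-of-Lie-groups} (exactness of the latter being witnessed by the usual basepoint contracting homotopy, which needs only that $G/K\neq\emptyset$). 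Since $p^{*}$ is a morphism of such resolutions over $\id_{\fa}$, the comparison theorem for relatively injective resolutions makes it an isomorphism after applying $(-)^{G}$ and passing to cohomology. Chaining all the displayed isomorphisms yields $H^{n}_{\loc}(G;\fa)\cong H^{n}_{\loc}((G,K);A)$ for $n\geq 1$.

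The main obstacle I expect is precisely this last point: justifying that the standard continuous cochain complexes are relatively injective resolutions in the sense needed for the comparison theorem to apply to $H^{n}_{\vE}$, and that $C((G/K)^{n+1},\fa)$ inherits relative injectivity from the $K$-averaging retraction — this is where the quasi-completeness and local convexity of $\fa$ are genuinely used, via \cite{HochschildMostow62Cohomology-of-Lie-groups}. If one prefers to bypass the relative homological algebra, the alternative is to write down an explicit $G$-equivariant simplicial chain homotopy $\id\simeq p^{*}\circ\op{Av}$ on $C^{\bullet}(G^{\bullet+1},\fa)^{G}$ by interpolating the $n+1$ integrations over $K$ one argument at a time; constructing the homotopy operator and checking $ds+sd=\id-p^{*}\op{Av}$ is then the technical heart. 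In either approach, the only non-standard inputs are Proposition~\ref{prop:van_Est_relative_cohomology_isomorphism_in_coefficients} and the two van Est-versus-locally-continuous comparison theorems recorded above.
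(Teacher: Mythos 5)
Your proposal is correct and follows essentially the same route as the paper: both reduce the statement to the zig-zag $H^{n}_{\loc}(G;\fa)\cong H^{n}_{\vE}(G;\fa)\cong H^{n}_{\vE}((G,K);\fa)\cong H^{n}_{\vE}((G,K);A)\cong H^{n}_{\loc}((G,K);A)$, using the van Est/locally-continuous comparison results, contractibility of $G/K$, and Proposition \ref{prop:van_Est_relative_cohomology_isomorphism_in_coefficients}. The only difference is that the step you prove by hand --- that $p^{*}\from H^{n}_{\vE}((G,K);\fa)\to H^{n}_{\vE}(G;\fa)$ is an isomorphism, via Haar averaging over $K$ (using quasi-completeness of $\fa$) and the comparison theorem for strong relatively injective resolutions --- is exactly what the paper disposes of by citing Guichardet, Corollaire III.2.2, and your sketch is the standard proof of that cited result.
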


\begin{proof}
 We consider the following diagram of morphisms of cochain complexes, in which
 the maps are pre-compositions with $p\from G\to G/K$, post-compositions with
 $q\from \fa\to A$ and inclusions of locally continuous maps into continuous
 ones:
 \begin{equation*}
  \vcenter{  \xymatrix{
  & C_{\vE}^{n}(G,\fa)^{G}\ar[dr]^{\alpha_{2}}\\
  C_{\vE}^{n}(G/K,\fa)^{G} \ar[ur]^{\alpha_{1}}\ar[r]^{\alpha_{3}}\ar[dr]^{\alpha_{4}} & C_{\loc}^{n}(G/K,\fa)^{G} \ar[r]^{\beta_{1}} & C_{\loc}^{n}(G,\fa)^{G}\\
  & C^{n}_{\vE}(G/K,A)^{G}\ar[r]^{\alpha_{5}} &C_{\loc}^{n}(G/K,A)^{G}
  }}.
 \end{equation*}
 Now $\alpha_{1}$ is a quasi-isomorphism, i.e., it induces an isomorphism in
 cohomology, by \cite[Corollaire
 III.2.2]{Guichardet80Cohomologie-des-groupes-topologiques-et-des-algebres-de-Lie}.
 Moreover, $\alpha_{2}$ is a quasi-isomorphism by \cite[Proposition
 III.6]{FuchssteinerWockel11Topological-Group-Cohomology-with-Loop-Contractible-Coefficients}.
 The contractibility of $G/K$ also implies that $\alpha_{3}$ and $\alpha_{5}$
 are quasi-isomorphisms by \cite[Theorem
 3.16]{Fuchssteiner11A-Spectral-Sequence-Connecting-Continuous-With-Locally-Continuous-Group-Cohomology}.
 Consequently, $\beta_{1}$ is a quasi-isomorphism. In addition, $\alpha_{4}$
 induces an isomorphism in cohomology if $n\geq 1$ by Proposition
 \ref{prop:van_Est_relative_cohomology_isomorphism_in_coefficients} since $G/K$
 is contractible. This induces the desired isomorphisms
 $H^{n}_{\loc}(G;\fa)\cong H^{n}_{\loc}((G,K);A)$ for $n\geq 1$.
\end{proof}

\begin{remark}\label{rem:identifications_for_log_exact_sequence_restriction_inflation}
 We will denote the isomorphism from the preceding proposition by
 \begin{equation*}
  \psi^{n}\from H^{n}_{\loc}(G;\fa)\xrightarrow{~\cong~} H^{n}_{\loc}((G,K);A).
 \end{equation*}
 The same argument also shows that there is an isomorphism in the locally
 smooth cohomology, which we also denote by
 \begin{equation*}
  \psi^{n}\from H^{n}_{\loc,s}(G;\fa)\xrightarrow{\cong} H^{n}_{\loc,s}((G,K);A).
 \end{equation*}
 Note that $\psi^{n}$ is not implemented by a canonical morphism on the cochain
 level. However, we have the morphism
 $\beta_{1}=p^{*}\from C^{n}_{\loc}(G/K,\fa)^{G}\to C^{n}_{\loc}(G,\fa)^{G}$.
 The preceding proof shows that this also induces an isomorphism
 \begin{equation*}
  p^{*}\from H^{n}_{\loc}(G;\fa)\xrightarrow{\cong} H^{n}_{\loc}((G,K);\fa)
 \end{equation*}
 in cohomology for all $n\in\N_{0}$.
 
 If $i\from K\to G$ denotes the inclusion, then this is a homotopy equivalence,
 and same is true for the induced map of classifying spaces $Bi\from BK\to BG$.
 Thus the induced map in cohomology $Bi^{*}$ is an isomorphism and the
 commuting diagram
 \begin{equation*}
  \xymatrix{ 
  H^{n+1}_{\loc}(G;\Gamma)\ar[d]^{i^{*}}\ar[r]^(.45){\cong}& H^{n+1}_{\pi_{1}(BG)}(BG;\Gamma)\ar[d]^{Bi^{*}}\\
  H^{n+1}_{\loc}(K;\Gamma)\ar[r]^(.45){\cong}& H^{n+1}_{\pi_{1}(BK)}(BK;\Gamma)
  }
 \end{equation*}
 shows hat $i^{*}\from H^{n}_{\loc}(G;\Gamma)\to H^{n}_{\loc}(K,\Gamma)$ is an
 isomorphism.
 
 With respect to these identifications, the characteristic morphisms
 ${\varepsilon}^{n}\from H^{n}_{\loc}(G;\Gamma)\to H^{n}_{\loc}(G;\fa)$ induce
 morphisms
 $  {\tilde{\varepsilon}}^{n}\from H^{n}_{\loc}(K;A)\to H^{n+1}_{\loc}((G,K);A)$
 that make
 \begin{equation*}
  \vcenter{\xymatrix{
  H^{n}_{\loc}(K;A)\ar[rr]^{\tilde{\varepsilon}^{n}}\ar[d]^{\delta^{n}} && H^{n+1}_{\loc}((G,K);A)\\
  H^{n+1}_{\loc}(K;\Gamma)\ar[r]^{(i^{*})^{-1}} & H^{n+1}_{\loc}(G;\Gamma)\ar[r]^{\varepsilon^{n+1}} & H^{n+1}_{\loc}(G;\fa)\ar[u]_{\psi^{n+1}}
  }}
 \end{equation*}
 commute. We shall call the morphisms $\tilde{\varepsilon}^{n}$ also
 \emph{characteristic morphisms.}
\end{remark}

\begin{tabsection}
 The following proposition illustrates that one should think of the
 characteristic morphism as some kind of connecting homomorphism.
\end{tabsection}

\begin{proposition}\label{prop:characteristic_morphism}
 Suppose that $G$ is a finite-dimensional Lie group with finitely many
 components, that $K\leq G$ is a maximal compact subgroup, that $\fa$ is a
 quasi-complete locally convex $G$-module and that $\Gamma\leq \fa$ is a
 discrete submodule. Then the sequence
 \begin{equation}\label{eqn:log_exact_sequence_restriction_inflation}
  H^{1}_{\loc}((G,K);A)\xrightarrow{p^{*}} \cdots\xrightarrow{\tilde{\varepsilon}^{n-1}} H^{n}_{\loc}((G,K);A)\xrightarrow{p^{*}} H^{n}_{\loc}(G;A)\xrightarrow{i^{*}} H^{n}_{\loc}(K;A)\xrightarrow{\tilde{\varepsilon}^{n}} H^{n+1}_{\loc}((G,K);A)\to\cdots
 \end{equation}
 is exact.
\end{proposition}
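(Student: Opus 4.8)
The plan is to identify the asserted sequence \eqref{eqn:log_exact_sequence_restriction_inflation} with (a rewriting of) the long exact coefficient sequence \eqref{eqn:long_exact_sequence_1} for the module $A=\fa/\Gamma$, transported through the isomorphisms $\psi^{n}$, $p^{*}$, $i^{*}$ and $\flat^{n}$ assembled in the previous remarks. Concretely, I would start from the long exact sequence
\[
 \cdots\to H^{n}_{\loc}(G;\Gamma)\xrightarrow{\varepsilon^{n}} H^{n}_{\loc}(G;\fa)\to H^{n}_{\loc}(G;A)\xrightarrow{\delta^{n}} H^{n+1}_{\loc}(G;\Gamma)\xrightarrow{\varepsilon^{n+1}} H^{n+1}_{\loc}(G;\fa)\to\cdots
\]
and break it up using the elementary fact that a long exact sequence $\cdots\to X^{n}\xrightarrow{\alpha} Y^{n}\xrightarrow{\beta} Z^{n}\xrightarrow{\gamma} X^{n+1}\to\cdots$ gives rise, for each $n$, to a short exact sequence $0\to\op{coker}(\alpha^{n})\to Z^{n}\to\ker(\alpha^{n+1})\to 0$ and, more to the point, that splicing these together yields the exact sequence $\cdots\to\op{coker}(\varepsilon^{n-1})\to Z^{n}\to\op{coker}(\varepsilon^{n})\to\cdots$ only after one has a handle on the maps; the cleaner route is to observe directly that \eqref{eqn:log_exact_sequence_restriction_inflation} is obtained from the coefficient sequence by replacing the three terms around each $H^{n}_{\loc}(G;A)$ as follows: $H^{n}_{\loc}(G;\fa)\cong H^{n}_{\loc}((G,K);A)$ via $\psi^{n}$ (Proposition \ref{prop:relative_local_iso_van_Est}), and $H^{n}_{\loc}(G;\Gamma)\cong H^{n}_{\loc}(K;\Gamma)$ via $i^{*}$ together with $\delta^{n}\colon H^{n}_{\loc}(K;A)\to H^{n+1}_{\loc}(K;\Gamma)$ (which is an isomorphism for $n\geq 1$ because $H^{n}_{\loc}(K;\fa)=H^{n}_{\vE}(K;\fa)=0$ for $K$ compact and $\fa$ quasi-complete, by \cite{HochschildMostow62Cohomology-of-Lie-groups}).

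The key steps, in order, are: (1) record that $H^{n}_{\loc}(K;\fa)=0$ for all $n\geq 1$ since $K$ is compact and $\fa$ is quasi-complete locally convex, hence the coefficient long exact sequence for $K$ forces $\delta^{n}\colon H^{n}_{\loc}(K;A)\xrightarrow{\cong} H^{n+1}_{\loc}(K;\Gamma)$ for $n\geq 1$ and, for $n=0$, identifies $H^{1}_{\loc}(K;\Gamma)$ with $\op{coker}(H^{0}_{\loc}(K;\fa)\to H^{0}_{\loc}(K;A))=\op{coker}(\fa^{K}\to A^{K})$; (2) assemble the commuting diagram of Remark \ref{rem:identifications_for_log_exact_sequence_restriction_inflation} defining $\tilde\varepsilon^{n}$ as the composite $\psi^{n+1}\circ\varepsilon^{n+1}\circ(i^{*})^{-1}\circ\delta^{n}$, so that $\tilde\varepsilon^{n}$ literally is $\varepsilon^{n+1}$ up to the chosen isomorphisms; (3) use the naturality of the $\flat$/restriction identifications (Remark following Proposition \ref{prop:isomorphism_btw_SM_and_loc_is_natural_in_first_argument}, and Remark \ref{rem:identifications_for_log_exact_sequence_restriction_inflation}) to check that under these identifications $p^{*}\colon H^{n}_{\loc}((G,K);A)\to H^{n}_{\loc}(G;A)$ corresponds to the map $H^{n}_{\loc}(G;\fa)\to H^{n}_{\loc}(G;A)$ from the coefficient sequence, and $i^{*}\colon H^{n}_{\loc}(G;A)\to H^{n}_{\loc}(K;A)$ composed with $\delta^{n}$ corresponds to $\delta^{n}\colon H^{n}_{\loc}(G;A)\to H^{n+1}_{\loc}(G;\Gamma)$ followed by the isomorphism $i^{*}$ on $\Gamma$-coefficients — here one uses that $i^{*}$ commutes with the connecting homomorphisms $\delta$ (naturality of the long exact coefficient sequence in the group variable); (4) conclude that \eqref{eqn:log_exact_sequence_restriction_inflation} is, term by term and map by map, isomorphic to the exact sequence \eqref{eqn:long_exact_sequence_1} for $n\geq 1$, hence exact. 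The low-degree end $H^{1}_{\loc}((G,K);A)\xrightarrow{p^{*}}\cdots$ needs separate attention: exactness at $H^{1}_{\loc}(G;A)$ and the fact that the sequence starts with $p^{*}$ injective follow from $H^{0}$ and $H^{1}$ bookkeeping, using that $H^{1}_{\loc}(K;\Gamma)=\op{coker}(\fa^{K}\to A^{K})$ receives $\delta^{0}$ surjectively.

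I expect the main obstacle to be step (3): verifying that the several ad hoc isomorphisms ($\psi^{n}$, which is \emph{not} induced by a cochain map, the $\delta$-functor isomorphisms $\flat$/$\zeta$, and $Bi^{*}$) are mutually compatible with the connecting maps $\delta^{n}$ in the way required, i.e.\ that the big diagram built out of the coefficient sequences for $G$ and for $K$ and the relative complexes genuinely commutes. The cleanest way to handle this is the $\delta$-functor argument already used in Proposition \ref{prop:isomorphism_btw_SM_and_loc_is_natural_in_first_argument}: all the relevant functors $A\mapsto H^{n}_{\loc}(G;A)$, $A\mapsto H^{n}_{\loc}((G,K);A)$, $A\mapsto H^{n}_{\loc}(K;A)$ are effaceable $\delta$-functors on $\cat{G-Mod}$ (effaced by the modules $E_{G}B_{G}^{n}A$, which for compact $K$ and quasi-complete coefficients also efface the relative and restricted versions), so any natural transformation between them that is compatible in degree $0$ is automatically compatible with all $\delta^{n}$; this reduces every compatibility check to the trivial degree-$0$ statement. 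Once that machinery is in place, the rest is formal diagram-chasing, and the only genuine input beyond the previous propositions is the vanishing $H^{\geq 1}_{\loc}(K;\fa)=0$.
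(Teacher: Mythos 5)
Your proposal is correct and takes essentially the same route as the paper: its proof consists precisely of the morphism of coefficient long exact sequences induced by $i\from K\to G$, the vanishing $H^{n}_{\loc}(K;\fa)\cong H^{n}_{\vE}(K;\fa)=0$ for $n\geq 1$ (so that $\delta^{n}\from H^{n}_{\loc}(K;A)\to H^{n+1}_{\loc}(K;\Gamma)$ is an isomorphism), and the definition of $\tilde{\varepsilon}^{n}$ from Remark \ref{rem:identifications_for_log_exact_sequence_restriction_inflation}, exactly as you outline. One small remark: your step (3) does not require the effaceability/$\delta$-functor machinery (whose applicability to the relative functor $A\mapsto H^{n}_{\loc}((G,K);A)$ you assert but do not verify); the needed compatibilities follow directly on cochains, since $\psi^{n}$ is a zig-zag of cochain maps in which pre-composition with $p$ commutes with post-composition with $q$ (giving $p^{*}\circ\psi^{n}=q_{*}$), and the square intertwining the connecting maps for $G$ and $K$ commutes by ordinary naturality of $\delta$ under the restriction cochain map.
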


\begin{proof}
 We first observe that
 \begin{equation*}
  \xymatrix{ H^{n}_{\loc}(G;\fa)\ar[r]& H^{n}_{\loc}(G;A) \ar[d]^{i^{*}} \ar[r]^{\delta^{n}} & H^{n+1}_{\loc}(G;\Gamma)\ar[d]^{i^{*}}\ar[r]&H^{n+1}_{\loc}(G;\fa)\\
  H^{n}_{\loc}(K;\fa)\ar[r]&  H^{n}_{\loc}(K;A)\ar[r]^{\delta^{n}}& H^{n+1}_{\loc}(K;\Gamma) \ar[r]& H^{n+1}_{\loc}(K;\fa)}
 \end{equation*}
 commutes and has exact rows. If $n\geq 1$, then we have
 $H^{n}_{\loc}(K;\fa)\cong H^{n}_{\vE}(K;\fa)=0$ by \cite[Corollary
 II.8]{FuchssteinerWockel11Topological-Group-Cohomology-with-Loop-Contractible-Coefficients}
 and \cite[Lemma
 IX.1.10]{BorelWallach00Continuous-cohomology-discrete-subgroups-and-representations-of-reductive-groups}
 and thus $\delta^{n}\from H^{n}_{\loc}(K;A)\to H^{n+1}_{\loc}(K;\Gamma)$ is an
 isomorphism.Thus the exactness of
 \eqref{eqn:log_exact_sequence_restriction_inflation} follows from the
 definition of $\tilde{\varepsilon}^{n}$.
\end{proof}

\section{The relation to relative Lie algebra cohomology} %
\label{sec:the_relation_to_relative_lie_algebra_cohomology}

\begin{tabsection}
 We now discuss the relation of topological group cohomology (in the guise of
 locally smooth group cohomology) to Lie algebra cohomology. Whilst the
 previous sections are results on the topological group cohomology that were
 derived more or less from their definitions, the perspective of Lie algebra
 cohomology will really bring new facets into the game. To this end, we will
 have to use the locally smooth model $H^{n}_{\loc,s}(G;A)$ quite
 intensively.\\
 
 Unless mentioned otherwise, $G$ will throughout this section be a
 finite-dimensional Lie group with finitely many components and $K$ will be a
 maximal compact subgroup. The coefficient module is always of the form
 $A=\fa/\Gamma$, where $\fa$ is a smooth, locally convex and quasi-complete
 $G$-module and $\Gamma$ is a discrete submodule. We will denote the
 corresponding quotient morphisms by $p\from G\to G/K$ and $q\from \fa\to A$
 and injections by $i\from K\to G$ and $j\from \Gamma\to\fa$. Moreover, $\fg$
 denotes the Lie algebra of $G$ and $\fk$ the Lie algebra of $K$. Note that $A$
 is then also a module for $K$ and that $\fa$ is also a module for $\fg$ and
 for $\fk$.\\
 
 We first recall some basic notions. Let $\fh\leq \fg$ be an arbitrary
 subalgebra. The relative Lie algebra cohomology $H^{n}_{\Lie}((\fg,\fh);\fa)$
 is the cohomology of the based and invariant cochains in the
 Chevalley-Eilenberg complex
 $C^{n}_{\CE}(\fg,\fa):=\Hom_{\R}(\Lambda^{n}\fg,\fa)$, i.e.,
 \begin{equation*}
  C_{\CE}^{n}((\fg,\fh),\fa):=\Hom_{\R}(\Lambda^{n}(\fg/\fh),\fa)^{\fh}\cong \{\omega \from \Lambda^{n}\fg\to\fa\mid i_{y}(\omega)=0 \text{ and }\theta_{y}(\omega)=0 \text{ for all } y\in\fh\},
 \end{equation*}
 where $i_{y}(\omega)(x_{1},...,x_{n-1}):=\omega(y,x_{1},...,x_{n-1})$ and
 \begin{equation*}
  \theta_{y}(\omega)(x_{1},...,x_{n}):=\sum_{i=1}^{n}\omega(x_{1},...,[x_{i},y],...,x_{n})+y.\omega(x_{1},...,x_{n})
 \end{equation*}
 with respect to the Chevalley-Eilenberg differential
 \begin{equation*}
  d_{\CE}\omega (x_{0},...,x_{n}):=\sum_{1\leq i\leq n}(-1)^{i}x_{i}.\omega(x_{0},...,\wh{x_{i}},...,x_{n})+\sum_{1\leq i < j \leq n}(-1)^{i+j}\omega([x_{i},x_{j}],x_{0},...,\wh{x_{i}},...,\wh{x_{j}},...,x_{n}).
 \end{equation*}
 (cf.\ \cite[Section
 I.1]{BorelWallach00Continuous-cohomology-discrete-subgroups-and-representations-of-reductive-groups},
 \cite[n\textsuperscript{o}
 II.3]{Guichardet80Cohomologie-des-groupes-topologiques-et-des-algebres-de-Lie}
 or \cite[Chapter
 X]{GreubHalperinVanstone76Connections-curvature-and-cohomology}). If
 $i\from \fh\to\fg$ denotes the inclusion, then we have a sequence of cochain
 complexes
 \begin{equation*}
  C_{\CE}^{n}((\fg,\fh),\fa) \hookrightarrow C_{\CE}^{n}(\fg,\fa) \xrightarrow{i^{*}} C^{n}_{\CE}(\fh,\fa).
 \end{equation*}
 This gives rise to a sequence in cohomology
 \begin{equation}\label{eqn:three_term_sequence_for_relative_lie_algebra_cohomology}
  H^{n}_{\Lie}((\fg,\fh);\fa)\xrightarrow{\kappa^{n}} H^{n}_{\Lie}(\fg;\fa)\xrightarrow{i^{*}}H^{n}_{\Lie}(\fh;\fa)
 \end{equation}
 which is of order two, but which is in general far from being exact. For
 instance, $H^{n}_{\Lie}((\fg,\fh);\fa)\to H^{n}_{\Lie}(\fg;\fa)$ vanishes
 frequently (see Section \eqref{sec:subalgebras_non_cohomologous_to_zero}). The
 sequence \eqref{eqn:three_term_sequence_for_relative_lie_algebra_cohomology} is
 much more a part of the spectral sequence
 \begin{equation*}
  E_{2}^{p,q}=H^{p}_{\Lie}(\fh;\R)\otimes H^{q}_{\Lie}((\fg,\fh);\fa)\Rightarrow H^{p+q}_{\Lie}(\fg;\fa)
 \end{equation*}
 \cite[Chapitre VI]{Koszul50Homologie-et-cohomologie-des-algebres-de-Lie},
 where the morphisms from
 \eqref{eqn:three_term_sequence_for_relative_lie_algebra_cohomology} occur as
 edge homomorphisms.
 
 In case that $\fh=\fk$ and $K$ is not connected there is a subcomplex
 \begin{equation*}
  C_{\CE}^{n}((\fg,K),\fa):=\Hom(\Lambda^{n}\fg/\fk,\fa)^{K}
 \end{equation*}
 of $C_{\CE}^{n}((\fg,\fk),\fa)$, whose cohomology we denote by
 $H^{n}_{\Lie}((\fg,K);\fa)$. We clearly have an induced sequence in cohomology
 \begin{equation*}
  H^{n}_{\Lie}((\fg,K);\fa)\xrightarrow{\kappa^{n}} H^{n}_{\Lie}(\fg;\fa)\xrightarrow{i^{*}}H^{n}_{\Lie}(\fk;\fa)
 \end{equation*}
 and $C_{\CE}^{n}((\fg,K),\fa)= C_{\CE}^{n}((\fg,\fk),\fa)$ if $K$ is
 connected. \\
 
 We now introduce the differentiation homomorphism from locally smooth to Lie
 algebra cohomology.
\end{tabsection}

\begin{remark}(cf.\
 \cite[Section V.2]{Neeb06Towards-a-Lie-theory-of-locally-convex-groups},
 \cite[n\textsuperscript{o}
 III.7.3]{Guichardet80Cohomologie-des-groupes-topologiques-et-des-algebres-de-Lie})
 We want to differentiate in the identity, so we first identify
 $C^{n}_{\loc,s}(G,A)^{G}$ via $f\mapsto F$ with
 $F(g_{1},...,g_{n}):=F(1,g_{1},g_{1} g_{2},...,g_{1}\cdots g_{n})$ with
 \begin{equation*}
  \wt{C}_{\loc,s}^{n}(G,A):=\{f\from G^{n}\to A\mid f\text{ is smooth on some identity neighbourhood}\}.
 \end{equation*}
 (cf.\ \cite[Appendix
 B]{Neeb04Abelian-extensions-of-infinite-dimensional-Lie-groups}). It will be
 convenient to work with normalised cochains, so we set
 \begin{equation*}
  \wt{C}_{\loc,s}^{0,n}(G,A):=\{f\in \wt{C}_{\loc,s}^{n}(G,A)\mid
  f(g_{1},...,g_{n})=0 \text{ if }g_{i}=1\text{ for some }i\}.
 \end{equation*}
 and observe that the inclusion
 $\wt{C}_{\loc,s}^{0,n}(G,A)\hookrightarrow \wt{C}_{\loc,s}^{n}(G,A)$ is a
 quasi-isomorphism by the dual Dold-Kan correspondence.
 
 Now suppose that $M$ is a manifold and that $f\from M^{n}\to A$ is smooth. For
 $v_{n}\in T_{m_{n}}M_{n}$ we then set
 \begin{equation*}
  \partial_{n}({v_{n}})f(m_{n})\from M^{n-1}\to \fa,\quad 
  (m_{1},...,m_{n-1})\mapsto \delta(f)(0_{m_{1}},...,0_{m_{n-1}},v_{k}),
 \end{equation*}
 where $\delta(f):=f^{*}(\omega_{\op{MC}})$ is the left logarithmic derivative
 of $f$ (equivalently the pull-back of the Maurer-Cartan form
 $\omega_{\op{MC}}$ of $A$). With this we now set
 \begin{equation*}
  D^{n}\from \wt{C}_{\loc,s}^{n}(G,A)\to C_{\CE}^{n}(\fg,\fa),\quad D^{n}(f)(v_{1},...,v_{n}):= \sum_{\sigma\in S_{n}} \op{sgn}(\sigma)\partial_{1}(v_{\sigma( 1)})\cdots \partial_{n}(v_{\sigma (n)})f(1,...,1),
 \end{equation*}
 where $M=U$ for $U\se G$ an identity neighbourhood such that
 $\left.f\right|_{U\times \cdots U}$ is smooth and
 $v_{1},...,v_{n}\in\fg=T_{1}U$. This induces for $n\geq 1$ (and if $A=\fa$
 also for $n=0$) a morphism in cohomology
 \begin{equation*}
  D^{n}\from H^{n}_{\loc,s}(G;A)\to H^{n}_{\Lie}(\fg;\fa)
 \end{equation*}
 (see \cite[Theorem
 V.2.6]{Neeb06Towards-a-Lie-theory-of-locally-convex-groups}, \cite[Appendix
 B]{Neeb04Abelian-extensions-of-infinite-dimensional-Lie-groups},
 \cite[n\textsuperscript{o}
 III.7.3]{Guichardet80Cohomologie-des-groupes-topologiques-et-des-algebres-de-Lie}
 or \cite[Appendix I]{EstKorthagen64Non-enlargible-Lie-algebras}). The kernel
 of $D^{n}$ are precisely those cohomology classes which possess
 representatives by cochains in $\wt{C}_{\loc,s}^{n}(G,A)$ that are constant on
 some identity neighbourhood. These are the \emph{flat} classes in the locally
 smooth cohomology. They comprise precisely the image of the morphism
 \begin{equation*}
(A^{\delta}\to A)_{*} \from H^{n}_{\loc,s}(G;A^{\delta})\to H^{n}_{\loc,s}(G;A)
 \end{equation*}
 \cite[Remark
 V.14]{WagemannWockel13A-Cocycle-Model-for-Topological-and-Lie-Group-Cohomology}.
 We thus obtain an exact sequence
 \begin{equation*}
  \xymatrix{
  H^{n}_{\Top}(BG;A^{\delta})\ar[r]^{\flat^{n}} & H^{n}_{\loc,s}(G;A) \ar[r]^{D^{n}} & H^{n}_{\Lie}(\fg;\fa).
  }
 \end{equation*}
\end{remark}

\begin{lemma}
 If we set
 \begin{multline*}
  \wt{C}_{\loc,s}((G,K),A):=\{f\in \wt{C}^{n}_{\loc,s}(G,A)\mid
  f(g_{1},...,g_{n})=k_{0}.f(k_{0}^{-1}g_{1}k_{1},...,k_{n-1}^{-1}g_{n}k_{n})\\\text{ for all }g_{1},...,g_{n}\in G,k_{0},...,k_{n}\in K\},
 \end{multline*}
 then $\alpha\from C^{n}_{\loc,s}(G/K,A)^{G}\to \wt{C}^{n}_{\loc,s}((G,K),A) $,
 $f\mapsto F$ with
 $F(g_{1},...,g_{n}):=f(p(1),p(g_{1}),p(g_{1} g_{2}),...,p(g_{1}\cdots g_{n}))$
 is an isomorphism of cochain complexes. Moreover, $D^{n}$ maps the subcomplex
 \begin{equation*}
  \wt{C}_{\loc,s}^{0,n}((G,K),A):=\wt{C}_{\loc,s}^{n}((G,K),A)\cap \wt{C}_{\loc,s}^{0,n}(G,A)
 \end{equation*}
 to $C^{n}_{\CE}((\fg,K);\fa)$ and induces a morphism
 \begin{equation*}
  D^{n}\from H^{n}_{\loc,s}((G,K);A)\to H^{n}_{\Lie}((\fg,K);\fa)
 \end{equation*}
 in cohomology.
\end{lemma}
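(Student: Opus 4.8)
The statement has three parts: (i) that $\alpha$ is an isomorphism of cochain complexes onto $\wt C^n_{\loc,s}((G,K),A)$; (ii) that $D^n$ carries the normalised subcomplex $\wt C^{0,n}_{\loc,s}((G,K),A)$ into the relative Chevalley–Eilenberg complex $C^n_{\CE}((\fg,K),\fa)$; and (iii) that the resulting map descends to cohomology. I would treat these in order, since (iii) is essentially formal once (i) and (ii) are in place.

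For (i), the plan is to exhibit the inverse map directly. Given $F\in\wt C^n_{\loc,s}((G,K),A)$, define $f$ on $G/K$ by $f(p(g_0),\dots,p(g_n)):=g_0.F(g_0^{-1}g_1,g_1^{-1}g_2,\dots,g_{n-1}^{-1}g_n)$ (the inhomogeneous-to-homogeneous shift that was already used to define $\wt C^n_{\loc,s}(G,A)$); one must check this is well defined, i.e.\ independent of the choice of representatives $g_i$ in their $K$-cosets, and here the covariance condition $F(g_1,\dots,g_n)=k_0.F(k_0^{-1}g_1k_1,\dots,k_{n-1}^{-1}g_nk_n)$ is exactly what is needed. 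Smoothness near the diagonal transfers along $p$ because $p$ admits local smooth sections, and $G$-equivariance of $f$ is immediate from the formula. That $\alpha$ and this inverse are mutually inverse cochain maps is a routine bookkeeping check against the simplicial face maps, so I would state it and not belabour it; the content is entirely in the well-definedness across cosets.

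For (ii), the key point is to show that for $f\in\wt C^{0,n}_{\loc,s}((G,K),A)$ the form $D^n(f)\in C^n_{\CE}(\fg,\fa)$ satisfies $i_y(D^n f)=0$ and $\theta_y(D^n f)=0$ for all $y\in\fk$ — equivalently that it lies in $\Hom(\Lambda^n(\fg/\fk),\fa)^K$. The vanishing $i_y(D^n f)=0$ should follow from differentiating the covariance relation in the $k_i$-variables: plugging $g_1=\exp(ty)$ with $y\in\fk$ and using the right-$K$-invariance built into the definition of $\wt C^n_{\loc,s}((G,K),A)$ forces the first-order term in that direction to vanish, because varying $k_0$ and $k_1$ simultaneously along $y$ shows $F$ is constant to first order along $\fk$ in the first slot. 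The $K$-invariance (hence $\theta_y$-invariance in the infinitesimal picture) comes from the $k_0.(-)$ factor together with conjugation-covariance in the middle slots; concretely one differentiates $F(g_1,\dots,g_n)=k.F(k^{-1}g_1k,\dots,k^{-1}g_nk)$ at $k=e$ along $y\in\fk$ and reads off exactly the relation $\theta_y(D^nf)=0$. This compatibility of $D^n$ with the bi-invariance conditions is the technical heart of the lemma and the step I expect to be the main obstacle, essentially because one has to be careful about the normalisation and about which variables the logarithmic derivative $\delta(f)=f^*(\omega_{\op{MC}})$ is taken in.

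For (iii), since the previous Remark already established that $D^n\from \wt C^n_{\loc,s}(G,A)\to C^n_{\CE}(\fg,\fa)$ is a cochain map inducing $D^n$ on cohomology, and since $\wt C^{0,\bullet}_{\loc,s}((G,K),A)\hookrightarrow \wt C^{\bullet}_{\loc,s}((G,K),A)$ is a quasi-isomorphism by the dual Dold–Kan correspondence (exactly as in the absolute case), one only needs that $D^n$ restricted to the relative normalised complex still commutes with the differentials — but that is inherited from the absolute statement, as the differentials are restrictions — and that its image lands in $C^\bullet_{\CE}((\fg,K),\fa)$, which is (ii). Combining with the isomorphism $\alpha$ of part (i) and the identification $H^n_{\loc,s}((G,K);A)=H^n(C^\bullet_{\loc,s}(G/K,A)^G)$ then yields the asserted map $D^n\from H^n_{\loc,s}((G,K);A)\to H^n_{\Lie}((\fg,K);\fa)$, naturally compatible with the absolute $D^n$ via $\kappa^n$ and $p^*$.
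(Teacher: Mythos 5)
Parts (i) and (iii) of your plan match the paper's proof in substance: the paper also identifies $C^{n}_{\loc,s}(G/K,A)^{G}$ with right-$K$-covariant inhomogeneous cochains (it does this via the $K$-equivariant diffeomorphism $G\cong G/K\times K$ rather than by writing down the inverse map, but that is only a bookkeeping difference), and it also invokes the quasi-isomorphism $\wt{C}_{\loc,s}^{0,\bullet}((G,K),A)\hookrightarrow \wt{C}_{\loc,s}^{\bullet}((G,K),A)$ to pass to cohomology.

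The gap is in part (ii), and it is precisely the $K$-versus-$\fk$ issue. You propose to verify $i_{y}(D^{n}f)=0$ and $\theta_{y}(D^{n}f)=0$ for $y\in\fk$ and assert that this is \emph{equivalent} to membership in $\Hom(\Lambda^{n}\fg/\fk,\fa)^{K}$. That equivalence holds only when $K$ is connected: $\theta_{y}$-invariance for all $y\in\fk$ gives invariance under the identity component $K_{0}$, not under the full group $K$. In the standing hypotheses of this section $G$ merely has finitely many components, so $K$ may be disconnected, and the whole point of the lemma's target being $C^{n}_{\CE}((\fg,K);\fa)$ rather than $C^{n}_{\CE}((\fg,\fk);\fa)$ (the paper introduces the former exactly for disconnected $K$, and the van Est isomorphism of Theorem \ref{thm:vanEstIsomorphism} is stated for it) is that genuine $K$-invariance is required. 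Your differentiation of the covariance relation at $k=e$ loses the component-group information, so as written your argument only lands in $C^{n}_{\CE}((\fg,\fk);\fa)$. The repair is immediate and is what the paper does: you already wrote the group-level identity $f(kg_{1}k^{-1},\dots,kg_{n}k^{-1})=k.f(g_{1},\dots,g_{n})$; since $D^{n}$ is natural with respect to conjugation on $G$ and the $K$-action on $\fa$, this identity directly gives $k.(D^{n}f)=D^{n}f$ for \emph{every} $k\in K$, with no differentiation in $k$. Similarly, for $i_{y}(D^{n}f)=0$ the paper avoids your first-order bookkeeping in $k_{0},k_{1}$: normalisation together with covariance forces $f(k,g_{1},\dots,g_{n-1})=0$ identically for $k\in K$, so all derivatives of $f$ in $\fk$-directions in that slot vanish outright; your "constant to first order" argument is more delicate and, as sketched, is the weakest link even in the connected case.
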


\begin{proof}
 Noting that we have a $K$-equivariant diffeomorphism $G\cong G/K\times K$ it
 is clear that
 \begin{multline*}
  \Map((G/K)^{n+1},A)\to \{f\in\Map(G^{n+1},A)\mid f(g_{0},...,g_{n})=f(g_{0}k_{0},...,g_{n}k_{n})\\\text{ for all }g_{0},...,g_{n}\in G,k_{0},...,k_{n}\in K\},\quad f\mapsto f \circ(p\times\cdots\times p)
 \end{multline*}
 is an isomorphism. A straight-forward calculation then shows that the
 composition with the isomorphism $f\mapsto F$ is $\alpha$ and that
 $\im(\alpha^{n})=\wt{C}^{n}_{\loc,s}((G,K),A)$.
 
 To verify $D^{n}(\wt{C}_{\loc,s}^{0,n}((G,K);A))\se C^{n}_{\CE}((\fg,K),\fa)$
 we observe that
 \begin{equation*}
  f(kg_{1}k^{-1},...,kg_{n}k^{-1})=k.f(g_{1},...,g_{n})\text{ for all }k\in K,g_{1},...,g_{n}\in G
 \end{equation*}
 implies $k.(D^{n}f)=D^{n}f$ for all $k\in K$. Moreover,
 \begin{equation*}
  f(k,g_{1},...,g_{n-1})=k^{-1}.f(1,g_{1},...,g_{n-1})=0 \text{ for all }k\in K,g_{1},...,g_{n-1}\in G
 \end{equation*}
 implies that $D^{n}f(y,x_{1},...,x_{n-1})$ vanishes if $y\in \fk$,
 $x_{1},...,x_{n-1}\in\fg$. To finish the proof we notice that the inclusion
 $\wt{C}_{\loc,s}^{0,n}((G,K);A)\hookrightarrow \wt{C}_{\loc,s}^{n}((G,K);A)$
 induces an isomorphism in cohomology, so $D^{n}$ is uniquely determined by its
 values on the subcomplex $\wt{C}_{\loc,s}^{0,n}((G,K);A)$.
\end{proof}

\begin{tabsection}
 The following isomorphism is sometimes also called the \emph{van Est}
 isomorphism. Note that $H^{n}_{\Lie}((\fg,K);\fa)=H^{n}_{\Lie}((\fg,\fk);\fa)$
 if $K$ is connected.
\end{tabsection}

\begin{theorem}\label{thm:vanEstIsomorphism}
 Under the hypothesis from the beginning of this section the morphism
 \begin{equation*}
  D^{n}\from  H^{n}_{\loc,s}((G,K);A)\to H^{n}_{\Lie}((\fg,K);\fa)
 \end{equation*}
 is an isomorphism.
\end{theorem}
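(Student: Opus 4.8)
The plan is to establish the van Est isomorphism in the relative setting by reducing it to the classical (non-relative) van Est isomorphism through a dimension-shifting/comparison argument, exactly parallel to how one proves the absolute statement $D^n\colon H^n_{\loc,s}(G;\fa)\xrightarrow{\cong} H^n_{\Lie}(\fg;\fa)$ for $G$ with compact $\fa$-coefficients, but now keeping track of the subgroup $K$. First I would recall that since $K$ is a maximal compact subgroup, $G/K$ is diffeomorphic to a vector space $\fp$ (Cartan decomposition $\fg=\fk\oplus\fp$), so $G/K$ is contractible; by the preceding lemma we may compute $H^n_{\loc,s}((G,K);A)$ using the complex $\wt C_{\loc,s}^{0,\bullet}((G,K),A)$ of normalised $K$-biequivariant locally smooth cochains on $G$, and the target $H^n_{\Lie}((\fg,K);\fa)$ is the cohomology of $\Hom(\Lambda^\bullet\fg/\fk,\fa)^K = C_{\CE}^\bullet((\fg,K),\fa)$.

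The key steps, in order, are: (1) Replace $A=\fa/\Gamma$ by $\fa$. By Proposition \ref{prop:van_Est_relative_cohomology_isomorphism_in_coefficients} the quotient $q\colon\fa\to A$ induces $q_*\colon H^n_{\loc,s}((G,K);\fa)\xrightarrow{\cong}H^n_{\loc,s}((G,K);A)$ for $n\ge 1$ (via the $\loc,s$-analogue, using contractibility of $G/K$, just as in Proposition \ref{prop:relative_local_iso_van_Est}), and similarly $q_*$ on the Lie-algebra side $H^n_{\Lie}((\fg,K);\fa)\to H^n_{\Lie}((\fg,K);\fa/\Gamma)$ is an isomorphism for $n\ge 1$ since $\Gamma$ is discrete hence has trivial Lie-algebra cohomology, and $D^n$ is natural in the coefficients. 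So it suffices to treat the case $A=\fa$, where $D^0$ also makes sense and one wants an isomorphism for all $n\ge 0$ (the case $n=0$ being $\fa^K$ on both sides, which is immediate). (2) Run a $\delta$-functor / double-complex argument: both $A\mapsto H^n_{\loc,s}((G,K);\fa)$ and $A\mapsto H^n_{\Lie}((\fg,K);\fa)$ are cohomological $\delta$-functors on quasi-complete locally convex $\fg$-modules that are $K$-smooth, the first by the long exact sequence for topologically trivial sequences (which $\fa\to\fb\to\fc$ automatically are, $\fc$ being a vector space), the second by the usual long exact sequence in Lie algebra cohomology; the $D^n$ commute with the connecting maps by the standard snake-lemma check at the cochain level. (3) Show $D^n$ is an isomorphism by an acyclic-models / effaceability argument: embed $\fa$ into an acyclic module. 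The natural candidate is $\fa\hookrightarrow C^\infty(G,\fa)$ (or $C^\infty(G/K\times K,\fa)$) with $G$ acting by left translation; for such an induced module both relative cohomologies vanish in positive degrees — on the group side because $C^\infty(G,\fa)$ is $\loc,s$-acyclic (it is of the form on which the bar-type resolution is split, cf. the acyclicity used in the absolute van Est theorem), on the Lie-algebra side because $\Hom(\Lambda^\bullet\fg/\fk, C^\infty(G,\fa))^K$ computes $H^\bullet$ of an induced $(\fg,K)$-module which vanishes in positive degrees. Then dimension shifting plus the Five Lemma propagates the $n=0$ isomorphism upward.

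The main obstacle I expect is Step (3): verifying that the chosen induced/coinduced module is simultaneously acyclic for the \emph{relative} locally smooth cohomology $H^\bullet_{\loc,s}((G,K);-)$ \emph{and} for the relative Lie algebra cohomology $H^\bullet_{\Lie}((\fg,K);-)$, with $D^\bullet$ identifying the contracting homotopies compatibly. The relative Lie-algebra acyclicity is standard (Hochschild–Serre / the fact that $C^\infty(G)$ is an injective $(\fg,K)$-module, or Shapiro's lemma), but the corresponding statement on the topological-group side — that $H^n_{\loc,s}((G,K);C^\infty(G,\fa))=0$ for $n\ge 1$, with the homotopy visibly differentiating to the Lie-algebra one — is the delicate point. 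One clean way around it is to instead invoke the already-available absolute van Est isomorphism $D^n\colon H^n_{\loc,s}(\wt G;\fa)\cong H^n_{\Lie}(\wt\fg;\fa)$ for the simply connected cover (or for $G$ itself using Theorem from the cited literature) together with the Hochschild–Serre spectral sequence $H^p_{\Lie}(\fk;\R)\otimes H^q_{\Lie}((\fg,\fk);\fa)\Rightarrow H^{p+q}_{\Lie}(\fg;\fa)$ matched against its group-cohomological analogue coming from the fibration $K\to \ast\to G/K$-type decomposition; comparing the two spectral sequences via $D$, which is an isomorphism on the $E_2$-row $p=0$ by construction and on the abutment by the absolute theorem, forces the isomorphism on the relative terms $H^q_{\Lie}((\fg,K);\fa)$. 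I would present the double-complex/spectral-sequence comparison as the primary route, falling back on the explicit effaceability argument only for the base case.
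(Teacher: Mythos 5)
Your Step (1) reduction (trading the coefficients $A=\fa/\Gamma$ for $\fa$ by contractibility of $G/K$) is sound and is in fact how the paper's own proof begins; note only that there is no need to touch the Lie-algebra side, since $H^{n}_{\Lie}((\fg,K);\argument)$ is only ever taken with coefficients $\fa$ (the expression $H^{n}_{\Lie}((\fg,K);\fa/\Gamma)$ is not defined here). The genuine gap is in your core step. Your primary route rests on an ``absolute van Est isomorphism'' $D^{n}\from H^{n}_{\loc,s}(G;\fa)\to H^{n}_{\Lie}(\fg;\fa)$, and this is false even for $1$-connected groups: for $G=\SU(2)$ and $\fa=\R$ one has $H^{3}_{\loc,s}(\SU(2);\R)\cong H^{3}_{\vE}(\SU(2);\R)=0$ (compactness, quasi-complete vector coefficients, as used in the proof of Proposition \ref{prop:characteristic_morphism}), whereas $H^{3}_{\Lie}(\mf{su}_{2};\R)\cong\R$. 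Indeed Theorem \ref{thm:disc} shows that $D^{n}$ on $H^{n}_{\loc,s}(G;\fa)$ factors through $\kappa^{n}\from H^{n}_{\Lie}((\fg,K);\fa)\to H^{n}_{\Lie}(\fg;\fa)$, which is typically far from surjective (that failure is the very subject of Sections \ref{sec:subalgebras_non_cohomologous_to_zero} and \ref{sec:semi_simple_lie_groups}). The correct absolute statement identifies $H^{n}_{\vE,s}(G;\fa)$ with the \emph{relative} cohomology $H^{n}_{\Lie}((\fg,K);\fa)$, i.e.\ with essentially the assertion you are trying to prove; consequently your spectral-sequence comparison is circular: the $p=0$ row of the group-side sequence against the Hochschild--Serre sequence \emph{is} the map $D^{q}$ in question, the abutments are not isomorphic via $D$, and the fibre directions do not match either ($H^{p}_{\vE,s}(K;\R)=0$ for $p\geq 1$ while $H^{p}_{\Lie}(\fk;\R)\neq 0$), so no Zeeman-type comparison can be run.

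The fallback effaceability route founders exactly on the point you flag yourself. For the coinduced module $C^{\infty}(G,\fa)$ the usual contracting homotopy on the homogeneous complex, $(hf)(x_{0},\dots,x_{n-1})(g):=f(gK,x_{0},\dots,x_{n-1})(g)$, inserts an argument that is decoupled from the diagonal of $(G/K)^{n+1}$, so it does not preserve the condition ``smooth on a neighbourhood of the diagonal''; hence acyclicity of coinduced modules for $H^{n}_{\loc,s}((G,K);\argument)$ is not available by this device, and your dimension shifting cannot start on the locally smooth complex. The remedy is to first trade locally smooth cochains on the contractible space $G/K$ for globally smooth ones (\cite[Section 7]{Fuchssteiner11A-Spectral-Sequence-Connecting-Continuous-With-Locally-Continuous-Group-Cohomology}) and to change coefficients by Proposition \ref{prop:van_Est_relative_cohomology_isomorphism_in_coefficients}; but after these two quasi-isomorphisms the remaining assertion is precisely the classical van Est isomorphism $H^{n}_{\vE,s}(G/K;\fa)\cong H^{n}_{\Lie}((\fg,K);\fa)$, which the paper simply quotes from \cite[Corollaire III.7.2, n\textsuperscript{o} III.7.3]{Guichardet80Cohomologie-des-groupes-topologiques-et-des-algebres-de-Lie}. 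So either cite that result (as the paper does), or carry out your effaceability/double-complex argument on the globally smooth complex $C^{\bullet}_{\vE,s}(G/K,\fa)^{G}$, where coinduced modules are acyclic; as written, neither of your two routes closes the argument.
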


\begin{proof}
 Consider the diagram of morphisms of cochain complexes
 \begin{equation*}
  \xymatrix{
  C_{\vE,s}^{n}(G/K,\fa)^{G}\ar[rr]^{(f\mapsto q \circ f)} \ar[drr]^{D^{n}} && \ar[d]^{D^{n}} C^{n}_{\vE,s}(G/K,A)^{G} \lhook\mkern-7mu \ar[r]
  & C^{n}_{\loc,s}((G,K),A)^{G}\ar[dl]^{D^{n}}\\
  & & C^{n}_{\CE}((\fg,K),\fa)
  }
 \end{equation*}
 which obviously commutes. Now
 $D^{n}\from H^{n}_{\vE,s}(G/K;\fa)\to H^{n}_{\Lie}((\fg,\fk);\fa)$ is an
 isomorphism by \cite[Corollaire III.7.2 and n\textsuperscript{o}
 III.7.3]{Guichardet80Cohomologie-des-groupes-topologiques-et-des-algebres-de-Lie}
 and the morphisms in the top row are quasi-isomorphisms by Proposition
 \ref{prop:van_Est_relative_cohomology_isomorphism_in_coefficients} and
 \cite[Section
 7]{Fuchssteiner11A-Spectral-Sequence-Connecting-Continuous-With-Locally-Continuous-Group-Cohomology}.
 This shows the claim.
\end{proof}

\begin{tabsection}
 It is this isomorphism that will enable us to access the cohomology groups
 $H^{n}(G;A)$ (mostly in the model $H^{n}_{\loc,s}(G;A)$). This is mostly
 because it connects to the well-understood algebraic picture by the following,
 obvious fact.
\end{tabsection}

\begin{Proposition}
 The diagram
 \begin{equation*}
  \xymatrix{
  H^{n}_{\loc,s}((G,K);A)\ar[r]^(.55){p^{*}}\ar[d]^{D^{n}} & H^{n}_{\loc,s}(G;A)\ar[r]^{i^{*}} \ar[d]^{D^{n}} & H^{n}_{\loc,s}(K;A) \ar[d]^{D^{n}} \\
  H^{n}_{\Lie}((\fg,K);\fa)\ar[r]^(.55){\kappa^{n}} & H^{n}_{\Lie}(\fg;\fa) \ar[r]^{i^{*}} & H^{n}_{\Lie}(\fk;\fa)
  }
 \end{equation*}
 commutes. In particular, the composed morphisms
 \begin{equation*}
  \xymatrix{ 
  & & H^{n}_{\loc,s}(G;A)\ar[dr]^{D^{n}} \\
  H^{n-1}_{\loc,s}(K;A)\ar[r]^(.45){\tilde{\varepsilon}^{n}} & H^{n}_{\loc,s}((G,K);A)\ar[ur]^{p^{*}}\ar[dr]^{D^{n}} & & H^{n}_{\Lie}(\fg;\fa) \\
  & & H^{n}_{\Lie}((\fg,K);\fa)\ar[ur]^{\kappa^{n}}
  }
 \end{equation*}
 vanish.
\end{Proposition}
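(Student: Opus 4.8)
The plan is to treat the two assertions separately: first the commutativity of the displayed square, which is a cochain-level statement, and then the vanishing of the two composites out of $H^{n-1}_{\loc,s}(K;A)$, which will follow formally from the commutativity together with Proposition~\ref{prop:characteristic_morphism}.

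For the \textbf{commutativity} I would argue at the level of normalised inhomogeneous cochains, using the explicit models already set up. Consider the right-hand square. Here $i^{*}\colon H^{n}_{\loc,s}(G;A)\to H^{n}_{\loc,s}(K;A)$ is restriction of cochains from $G$ to $K$, and on the Lie algebra side $i^{*}$ is precomposition with $\Lambda^{n}\fk\hookrightarrow\Lambda^{n}\fg$; since $D^{n}$ is computed from iterated directional derivatives at the identity and $\fk=T_{1}K\se T_{1}G=\fg$ while the $\fk$-module structure on $\fa$ is the restriction of the $\fg$-module structure, the derivatives of $f|_{K^{n}}$ at $(1,\dots,1)$ in directions from $\fk$ agree with those of $f$, so $D^{n}(f|_{K^{n}})=(D^{n}f)|_{\Lambda^{n}\fk}$ and the square commutes on the nose. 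For the left-hand square, use the isomorphism $\alpha$ of the Lemma preceding Theorem~\ref{thm:vanEstIsomorphism}: under $\alpha$ and the homogeneous-to-inhomogeneous identification, $p^{*}$ becomes the inclusion $\wt{C}^{n}_{\loc,s}((G,K),A)\hookrightarrow\wt{C}^{n}_{\loc,s}(G,A)$, while $\kappa^{n}$ on the algebraic side is the inclusion $C^{n}_{\CE}((\fg,K),\fa)\hookrightarrow C^{n}_{\CE}(\fg,\fa)$, and by the same Lemma the relative $D^{n}$ is the restriction of the $G$-level $D^{n}$ to the normalised subcomplex. Hence both ways round the square send a cochain $f$ to $D^{n}f$ regarded inside $C^{n}_{\CE}(\fg,\fa)$, so the square commutes (on the normalised subcomplex, which computes the same cohomology).

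For the \textbf{vanishing}, recall from Proposition~\ref{prop:characteristic_morphism}, transported to the locally smooth model via the natural isomorphism $H^{n}_{\loc}\cong H^{n}_{\loc,s}$, that $\tilde{\varepsilon}^{n}\colon H^{n-1}_{\loc,s}(K;A)\to H^{n}_{\loc,s}((G,K);A)$ and $p^{*}\colon H^{n}_{\loc,s}((G,K);A)\to H^{n}_{\loc,s}(G;A)$ are consecutive arrows of an exact sequence, so that $p^{*}\circ\tilde{\varepsilon}^{n}=0$. (Alternatively one can unwind $\tilde{\varepsilon}^{n}=\psi^{n}\circ\varepsilon^{n}\circ(i^{*})^{-1}\circ\delta^{n-1}$ from Remark~\ref{rem:identifications_for_log_exact_sequence_restriction_inflation}; the construction of $\psi^{n}$ in the proof of Proposition~\ref{prop:relative_local_iso_van_Est} yields $p^{*}\circ\psi^{n}=q_{*}$, and then $q_{*}\circ\varepsilon^{n}=q_{*}\circ j_{*}=(q\circ j)_{*}=0$ because $q\circ j\colon\Gamma\to A$ is the zero morphism.) In either case $D^{n}\circ p^{*}\circ\tilde{\varepsilon}^{n}=0$, and by the commutativity of the left-hand square this equals $\kappa^{n}\circ D^{n}\circ\tilde{\varepsilon}^{n}$, which therefore also vanishes; this is exactly the content of the second displayed diagram.

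The step I expect to be the \textbf{main obstacle} is not any single computation but the bookkeeping in the commutativity argument: one must keep straight the several identifications in play — homogeneous versus inhomogeneous cochains, $C^{n}_{\loc,s}(G/K,A)^{G}\cong\wt{C}^{n}_{\loc,s}((G,K),A)$, $H^{n}_{\loc}\cong H^{n}_{\loc,s}$, and the passage to normalised cochains — and check that under all of them $p^{*}$ and $\kappa^{n}$ are indeed the evident inclusions, so that $D^{n}$, which is not a chain map out of the full complex but is well-behaved only on normalised cochains and near the diagonal/identity, is genuinely compatible with them. Once that is arranged, the commutativity is immediate and the vanishing is the purely formal consequence of exactness described above.
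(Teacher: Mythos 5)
Your argument is correct and is exactly the reasoning the paper intends: it states this Proposition as an ``obvious fact'' without proof, the commutativity being the cochain-level compatibility of $D^{n}$ with restriction to $K$ and with the inclusions $\wt{C}^{n}_{\loc,s}((G,K),A)\hookrightarrow\wt{C}^{n}_{\loc,s}(G,A)$ and $C^{n}_{\CE}((\fg,K),\fa)\hookrightarrow C^{n}_{\CE}(\fg,\fa)$, and the vanishing following formally from $p^{*}\circ\tilde{\varepsilon}=0$ in the exact sequence of Proposition \ref{prop:characteristic_morphism}. Your alternative route via $\tilde{\varepsilon}^{n}=\psi\circ\varepsilon\circ(i^{*})^{-1}\circ\delta$ and $q\circ j=0$ matches the discussion the paper gives in the Remark immediately following the Proposition, so nothing further is needed.
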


\begin{Corollary}\label{cor:vanishing_of_characteristic_homomorphism}
 If ${\kappa^{n}}$ is injective, then the characteristic morphisms
 ${\varepsilon}^{n}$ and $\tilde{\varepsilon}^{n}$ vanish.
\end{Corollary}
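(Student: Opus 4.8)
The plan is a short diagram chase that feeds off the preceding Proposition and the van Est isomorphism of Theorem~\ref{thm:vanEstIsomorphism}. The preceding Proposition already records that the composite
\[
 H^{n-1}_{\loc,s}(K;A)\xrightarrow{\tilde{\varepsilon}^{n}} H^{n}_{\loc,s}((G,K);A)\xrightarrow{D^{n}} H^{n}_{\Lie}((\fg,K);\fa)\xrightarrow{\kappa^{n}} H^{n}_{\Lie}(\fg;\fa)
\]
is the zero morphism. Hence, as soon as $\kappa^{n}$ is injective, it may be cancelled on the left and we conclude that $D^{n}\circ\tilde{\varepsilon}^{n}=0$.

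The second step is to invoke Theorem~\ref{thm:vanEstIsomorphism}: the middle map $D^{n}\from H^{n}_{\loc,s}((G,K);A)\to H^{n}_{\Lie}((\fg,K);\fa)$ is an isomorphism, in particular injective, so it too can be cancelled and we are left with $\tilde{\varepsilon}^{n}=0$. This settles the vanishing of the relative characteristic morphisms.

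It then remains to transport this vanishing to the characteristic morphisms $\varepsilon^{n}$ themselves. For this I would use the commuting square of Remark~\ref{rem:identifications_for_log_exact_sequence_restriction_inflation}, which exhibits $\tilde{\varepsilon}^{n}$ as the conjugate of $\varepsilon^{n}$ by the maps $\delta$, $(i^{*})^{-1}$ and $\psi$. In positive degrees all three are isomorphisms: $i^{*}$ because $i\from K\to G$ is a homotopy equivalence, $\psi$ by Proposition~\ref{prop:relative_local_iso_van_Est}, and $\delta$ because $H^{n}_{\loc}(K;\fa)\cong H^{n}_{\vE}(K;\fa)$ vanishes for the compact group $K$ and $n\geq 1$ (as used in the proof of Proposition~\ref{prop:characteristic_morphism}). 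Therefore $\tilde{\varepsilon}^{n}=0$ forces $\varepsilon^{n}=0$, which completes the argument.

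There is no genuine obstacle here: the entire substance of the corollary is already contained in the preceding Proposition and in Theorem~\ref{thm:vanEstIsomorphism} — the latter, being a van Est type theorem, is the one truly nontrivial input. The only thing that requires care is the bookkeeping of degrees, so as to remain in the positive range where $\delta$ and $\psi$ are isomorphisms.
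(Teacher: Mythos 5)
Your argument is correct and is essentially the proof the paper leaves implicit: the Proposition gives $\kappa^{n}\circ D^{n}\circ\tilde{\varepsilon}^{n}=0$, injectivity of $\kappa^{n}$ and the van Est isomorphism of Theorem~\ref{thm:vanEstIsomorphism} kill $\tilde{\varepsilon}^{n}$, and the identifications of Remark~\ref{rem:identifications_for_log_exact_sequence_restriction_inflation} (with $\delta$, $i^{*}$, $\psi$ invertible in the relevant degrees) transport this to $\varepsilon^{n}=0$. Only note that the paper's own indexing shifts ($\tilde{\varepsilon}^{n}$ is tied to $\varepsilon^{n+1}$ in the Remark but written with target degree $n$ in the Proposition), which is exactly the bookkeeping point you already flag.
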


\begin{remark}
 The fact that $D^{n} \circ {p^{*}} \circ  \tilde{\varepsilon}^{n}$ vanishes
 can also be understood by considering the commuting diagram
 \begin{equation*}
  \vcenter{  \xymatrix{
  H^{n}_{\loc,s}(G;\Gamma)\ar[rr]^{j_{*}=\varepsilon^{n}} && H^{n}_{\loc,s}(G;\fa)\ar[rr]^{q_{*}}\ar[d]^{D^{n}} && H^{n}_{\loc,s}(G;A)\ar[d]^{D^{n}}\\
  && H^{n}_{\Lie}(\fg;\fa) \ar@{=}[rr] && H^{n}_{\Lie}(\fg;\fa)
  }}.
 \end{equation*}
 The identifications from Remark
 \ref{rem:identifications_for_log_exact_sequence_restriction_inflation} turn
 $D^{n} \circ {p^{*}} \circ  \tilde{\varepsilon}^{n}$ into
 $D^{n}\circ {\varepsilon}^{n} =D^{n} \circ j_{*}$. But the image of $j_{*}$
 consists, on the cochain level, of maps that vanish on an identity
 neighbourhood, so that all derivatives of these maps vanish in the identity.
 Consequently, $D^{n} \circ j_{*}$ vanishes. However, the fact that
 $D^{n} \circ {p^{*}} \circ  \tilde{\varepsilon}^{n}$ vanishes is not that
 important (let alone obvious), it is the conjunction with the fact that it
 factors as $\kappa^{n}\circ D^{n}\circ \wt{\varepsilon}^{n}$ that will be
 important.
\end{remark}

\begin{tabsection}
 We end this section with the following very convenient relation between the
 locally smooth Lie group cohomology, the abstract group cohomology and the Lie
 algebra cohomology.
\end{tabsection}

\begin{theorem}\label{thm:disc}
 Suppose $G$ is a Lie group with finitely many components and $\fa$ is a
 quasi-complete $G$-module on which $G_{0}$ acts trivially. Let
 $\xi^{n}\from H^{n}_{\loc,s}(G;\fa)\to H^{n}_{\loc,s}(G^{\delta};\fa^{\delta})=H^{n}_{\op{gp}}(G;\fa)$
 be induced by mapping locally smooth cochains to abstract cochains in the bar
 complex. Then the diagram
 \begin{equation}\label{eqn:disc1}
  \vcenter{\xymatrix{
  H^{n}_{\Top}(BG;\fa)\ar[r]^{\zeta^{n}} \ar[dd]^{(G^{\delta}\to G)^{*}} &H^{n}_{\loc,s}(G;\fa^{\delta}) \ar[dd]^{(G^{\delta}\to G)^{*}} \ar[dr]^{(\fa^{\delta}\to\fa)_{*}} & &H^{n}_{\loc,s}((G,K);\fa)\ar[d]^{D^{n}}\ar[dl]_{p^{*}}\\
  &&H^{n}_{\loc,s}(G;\fa)\ar[dl]_{\xi^{n}}\ar[dr]^{D^{n}} & H^{n}_{\Lie}((\fg,K);\fa)\ar[d]^{\kappa^{n}}\\        
  H^{n}_{\Top}(BG^{\delta};\fa)   \ar[r]^{\zeta^{n}}  & H^{n}_{\loc,s}(G^{\delta};\fa^{\delta})   &&H^{n}_{\Lie}(\fg;\fa)
  }}
 \end{equation}
 commutes and the sequence
 \begin{equation}\label{eqn:disc2}
  H^{n}_{\loc,s}(G;\fa^{\delta})\xrightarrow{(\fa^{\delta}\to\fa)_{*}} H^{n}_{\loc,s}(G;\fa)\xrightarrow{D^{n}} H^{n}_{\Lie}(\fg;\fa)
 \end{equation}
 is exact. In particular,
 $D^{n}\from H^{n}_{\loc,s}(G;\fa)\to H^{n}_{\Lie}(\fg;\fa)$ factors through
 $\kappa^{n}\from H^{n}_{\Lie}((\fg,K);\fa)\to H^{n}_{\Lie}(\fg;\fa)$ and
 isomorphisms.
\end{theorem}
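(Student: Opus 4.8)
The statement has three parts --- commutativity of \eqref{eqn:disc1}, exactness of \eqref{eqn:disc2}, and the factorisation of $D^{n}$ --- and essentially all of the required input is already available, so the plan is to cut \eqref{eqn:disc1} into a handful of elementary subdiagrams and then read off the rest. For commutativity I would go square by square. The outer square, comparing the routes $H^{n}_{\Top}(BG;\fa)\to H^{n}_{\loc,s}(G;\fa^{\delta})\to H^{n}_{\loc,s}(G^{\delta};\fa^{\delta})$ and $H^{n}_{\Top}(BG;\fa)\to H^{n}_{\Top}(BG^{\delta};\fa)\to H^{n}_{\loc,s}(G^{\delta};\fa^{\delta})$, is the naturality of $\zeta^{n}$ along the continuous homomorphism $G^{\delta}\to G$ from the Remark on $\zeta^{n}$ and $\flat^{n}$ in Section~\ref{sec:a_recap_of_topological_group_cohmology}; this uses that $\fa^{\delta}$ really is a topological $G$-module, which is exactly where the hypothesis that $G_{0}$ acts trivially on $\fa$ enters, and that on the discrete group $G^{\delta}$ the map $\zeta^{n}$ is the classical isomorphism $H^{n}_{\op{gp}}(G;\fa)\cong H^{n}_{\pi_{1}(BG^{\delta})}(BG^{\delta};\fa)$ --- so $H^{n}_{\Top}(BG;\fa)$ is to be understood as the local-system cohomology $H^{n}_{\pi_{1}(BG)}(BG;\fa^{\delta})$. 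The triangle with vertices $H^{n}_{\loc,s}(G;\fa^{\delta})$, $H^{n}_{\loc,s}(G;\fa)$ and $H^{n}_{\loc,s}(G^{\delta};\fa^{\delta})$, i.e.\ the identity $\xi^{n}\circ(\fa^{\delta}\to\fa)_{*}=(G^{\delta}\to G)^{*}$, is a tautology at the cochain level: pushing a locally smooth $\fa^{\delta}$-valued cochain forward to $\fa$ leaves its underlying function unchanged, and restricting that function to $(G^{\delta})^{n}$ is precisely the pull-back along $G^{\delta}\to G$, using that on $G^{\delta}$ locally smooth cochains are arbitrary cochains and $H^{n}_{\loc,s}(G^{\delta};\fa)=H^{n}_{\op{gp}}(G;\fa)=H^{n}_{\loc,s}(G^{\delta};\fa^{\delta})$. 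Finally, the right-hand square $D^{n}\circ p^{*}=\kappa^{n}\circ D^{n}$ is the left-hand square of the Proposition following Theorem~\ref{thm:vanEstIsomorphism}, taken with coefficient module $\fa$ (i.e.\ $\Gamma=0$).

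For the exactness of \eqref{eqn:disc2} I would invoke the description of the kernel of $D^{n}\from H^{n}_{\loc,s}(G;\fa)\to H^{n}_{\Lie}(\fg;\fa)$ from the Remark introducing $D^{n}$ in Section~\ref{sec:the_relation_to_relative_lie_algebra_cohomology}: the flat classes, those represented by cochains constant on some identity neighbourhood, are on the one hand exactly $\ker D^{n}$ (in particular all left logarithmic derivatives at $1$ vanish, so $D^{n}\circ(\fa^{\delta}\to\fa)_{*}=0$) and on the other hand exactly the image of $(\fa^{\delta}\to\fa)_{*}$. This is applicable here because $G_{0}$ acts trivially on $\fa$, so that $\fa^{\delta}$ is a topological $G$-module and the cited statement --- formulated there for $A=\fa/\Gamma$ --- may be used with $\Gamma=0$. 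Hence \eqref{eqn:disc2} is exact, and, since $\zeta^{n}$ is an isomorphism and $\flat^{n}=(\fa^{\delta}\to\fa)_{*}\circ\zeta^{n}$, this image coincides with the image of $\flat^{n}$, which is what makes the top row of \eqref{eqn:disc1} match up.

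For the factorisation, recall that $p^{*}\from H^{n}_{\loc,s}((G,K);\fa)\to H^{n}_{\loc,s}(G;\fa)$ is an isomorphism for every $n$ by Remark~\ref{rem:identifications_for_log_exact_sequence_restriction_inflation}, and that $D^{n}\from H^{n}_{\loc,s}((G,K);\fa)\to H^{n}_{\Lie}((\fg,K);\fa)$ is an isomorphism by Theorem~\ref{thm:vanEstIsomorphism} (again with $\Gamma=0$). Then $\varphi:=D^{n}\circ(p^{*})^{-1}\from H^{n}_{\loc,s}(G;\fa)\to H^{n}_{\Lie}((\fg,K);\fa)$ is an isomorphism, and the square $D^{n}\circ p^{*}=\kappa^{n}\circ D^{n}$ established above rewrites as $D^{n}=\kappa^{n}\circ\varphi$, which is the asserted factorisation of $D^{n}\from H^{n}_{\loc,s}(G;\fa)\to H^{n}_{\Lie}(\fg;\fa)$ through $\kappa^{n}$ and isomorphisms. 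I expect no new computation to be needed; the main (and, I think, only) obstacle is the careful bookkeeping of which identification is used at each arrow --- in particular reading $H^{n}_{\Top}(BG;\fa)$ as a local-system cohomology so that $\zeta^{n}$ makes sense, and making sure that the flat-classes input is invoked in the case $\Gamma=0$.
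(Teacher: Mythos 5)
Your proposal is correct and follows essentially the same route as the paper's (much terser) proof: the left rectangle by naturality of $\zeta^{n}$ (Proposition \ref{prop:isomorphism_btw_SM_and_loc_is_natural_in_first_argument} and the surrounding remark), the remaining triangle and square by the cochain-level definitions respectively the Proposition following Theorem \ref{thm:vanEstIsomorphism} with $A=\fa$, and exactness of \eqref{eqn:disc2} by the already recorded consequence of \cite[Remark V.14]{WagemannWockel13A-Cocycle-Model-for-Topological-and-Lie-Group-Cohomology}. Your explicit derivation of the factorisation $D^{n}=\kappa^{n}\circ D^{n}\circ (p^{*})^{-1}$ from Theorem \ref{thm:vanEstIsomorphism} and Remark \ref{rem:identifications_for_log_exact_sequence_restriction_inflation} just spells out what the paper leaves implicit in the ``in particular''.
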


\begin{proof}
 The left rectangle commutes by Proposition
 \ref{prop:isomorphism_btw_SM_and_loc_is_natural_in_first_argument} and the
 triangles by the definition of the morphisms on the cochain level. As already
 observed, the sequence \eqref{eqn:disc2} is exact by \cite[Remark
 V.14]{WagemannWockel13A-Cocycle-Model-for-Topological-and-Lie-Group-Cohomology}.
\end{proof}

\begin{corollary}\label{cor:disc1a}
 Suppose the hypothesis of Theorem \ref{thm:disc} hold. If, in addition,
 $\kappa^{n}\from H^{n}_{\Lie}((\fg,K);\fa)\to H^{n}_{\Lie}(\fg;\fa)$ is
 injective, then $H^{n}_{\Top}(BG;\fa)\to H^{n}_{\Top}(BG^{\delta};\fa)$
 vanishes.
\end{corollary}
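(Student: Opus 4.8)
The plan is to read off the statement from the big commutative diagram \eqref{eqn:disc1} in Theorem \ref{thm:disc} by a diagram chase. Fix a class $x\in H^{n}_{\Top}(BG;\fa)$; I must show that $(G^{\delta}\to G)^{*}(x)=0$ in $H^{n}_{\Top}(BG^{\delta};\fa)$. Since the lower horizontal arrow $\zeta^{n}\from H^{n}_{\Top}(BG^{\delta};\fa)\to H^{n}_{\loc,s}(G^{\delta};\fa^{\delta})$ of \eqref{eqn:disc1} is an isomorphism (cf.\ the Remark following Proposition \ref{prop:isomorphism_btw_SM_and_loc_is_natural_in_first_argument}), it is enough to show $\zeta^{n}\bigl((G^{\delta}\to G)^{*}(x)\bigr)=0$, and by the commutativity of the left-hand rectangle of \eqref{eqn:disc1} this element equals $(G^{\delta}\to G)^{*}\bigl(\zeta^{n}(x)\bigr)$.

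Next I would trace $y:=\zeta^{n}(x)\in H^{n}_{\loc,s}(G;\fa^{\delta})$ through the commuting triangle in \eqref{eqn:disc1}: the vertical map $(G^{\delta}\to G)^{*}$ out of $H^{n}_{\loc,s}(G;\fa^{\delta})$ factors as $\xi^{n}\circ(\fa^{\delta}\to\fa)_{*}$. Hence $(G^{\delta}\to G)^{*}(y)=\xi^{n}(z)$ with $z:=(\fa^{\delta}\to\fa)_{*}(y)\in H^{n}_{\loc,s}(G;\fa)$, and it remains only to check that $z=0$.

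This is where the hypothesis enters. Being in the image of $(\fa^{\delta}\to\fa)_{*}$, the class $z$ is killed by $D^{n}$, by the exactness of the sequence \eqref{eqn:disc2}. But the final assertion of Theorem \ref{thm:disc} says that $D^{n}\from H^{n}_{\loc,s}(G;\fa)\to H^{n}_{\Lie}(\fg;\fa)$ factors as $\kappa^{n}$ composed with isomorphisms; so if $\kappa^{n}$ is injective, then $D^{n}$ is injective, and therefore $z=0$. Consequently $(G^{\delta}\to G)^{*}(y)=\xi^{n}(z)=0$, which by the first paragraph gives $(G^{\delta}\to G)^{*}(x)=0$, as desired. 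Since every ingredient invoked here — commutativity of \eqref{eqn:disc1}, exactness of \eqref{eqn:disc2}, and the factorization of $D^{n}$ — is already established in Theorem \ref{thm:disc}, I do not expect any genuine obstacle; the only point requiring attention is keeping the orientations of the many arrows in \eqref{eqn:disc1} straight and noting that ``factors through $\kappa^{n}$ and isomorphisms'' is exactly what promotes injectivity of $\kappa^{n}$ to injectivity of $D^{n}$.
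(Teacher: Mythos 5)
Your diagram chase is correct and is exactly the argument the paper intends: Corollary \ref{cor:disc1a} is stated as an immediate consequence of Theorem \ref{thm:disc}, obtained by combining the commutativity of \eqref{eqn:disc1}, the exactness of \eqref{eqn:disc2}, the factorization $D^{n}=\kappa^{n}\circ D^{n}\circ(p^{*})^{-1}$ through isomorphisms, and the standard isomorphism $\zeta^{n}$ for the discrete group $G^{\delta}$. No gaps; your only added care (injectivity of the bottom $\zeta^{n}$) is justified by the remark on $\flat^{n}$ for discrete groups.
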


\begin{corollary}\label{cor:disc2}
 Suppose the hypothesis of Theorem \ref{thm:disc} hold. If, in addition, $G$
 admits a cocompact lattice, then
 \begin{equation*}
  \im\left(H^{n}_{\Top}(BG;\fa)\to H^{n}_{\Top}(BG^{\delta};\fa)\right)\cong \ker(H^{n}_{\Lie}((\fg,K);\fa)\to H^{n}_{\Lie}(\fg;\fa)).
 \end{equation*}
 This happens for instance if $G$ is semi-simple.
\end{corollary}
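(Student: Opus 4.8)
The plan is to upgrade the one-sided inclusion coming from Corollary \ref{cor:disc1a} to an isomorphism by producing, under the cocompactness hypothesis, enough classes in $H^{n}_{\Top}(BG;\fa)$ to surject onto $\ker\kappa^{n}$. First I would recall what Theorem \ref{thm:disc} already gives: the exact sequence \eqref{eqn:disc2} together with the factorisation of $D^{n}$ through $\kappa^{n}$ and the isomorphism $H^{n}_{\loc,s}((G,K);\fa)\cong H^{n}_{\Lie}((\fg,K);\fa)$ of Theorem \ref{thm:vanEstIsomorphism}. Chasing the commuting diagram \eqref{eqn:disc1}, one sees that the image of $H^{n}_{\Top}(BG;\fa)\to H^{n}_{\Top}(BG^{\delta};\fa)$ corresponds, via $\zeta^{n}$ and the identification $H^{n}_{\loc,s}(G;\fa^{\delta})\cong H^{n}_{\loc,s}(G;\fa)$ on flat classes, exactly to the flat classes in $H^{n}_{\loc,s}(G;\fa)$, i.e.\ to $\ker D^{n}$. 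Since $D^{n}$ factors as $\kappa^{n}\circ(D^{n}\text{ on }(G,K))$ with the second map an isomorphism, $\ker D^{n}$ surjects onto $\ker\kappa^{n}$; injectivity of this surjection is the content that needs the lattice. So the real statement to prove is: if $G$ admits a cocompact lattice, then a flat locally smooth class whose image in $H^{n}_{\Lie}((\fg,K);\fa)$ vanishes is already zero in $H^{n}_{\loc,s}(G;\fa)$ — equivalently, $(\fa^{\delta}\to\fa)_{*}$ restricted to the flat classes is injective onto $\ker D^{n}$ and the ambiguity is killed.

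The key input is the following rigidity statement for cocompact lattices $\Lambda\leq G$: the restriction $H^{n}_{\Top}(BG;\fa)\to H^{n}_{\Top}(B\Lambda;\fa)=H^{n}_{\op{gp}}(\Lambda;\fa)$ is \emph{injective} on the relevant range. This is where I expect the main obstacle to lie, and I would invoke it from the literature on continuous cohomology of semi-simple groups and their lattices: by Borel's density/stability results (the comparison of $H^{*}_{\op{gp}}(\Lambda;\R)$ with $H^{*}_{\Lie}((\fg,K);\R)$ via the van Est map for cocompact $\Lambda$, cf.\ Borel–Wallach), together with the fact that cocompactness makes $B\Lambda$ a closed aspherical manifold modelled on $G/K$, one gets that the composite $H^{n}_{\Top}(BG;\fa)\to H^{n}_{\op{gp}}(\Lambda;\fa)$ detects precisely the flat-class information. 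Concretely: $BG\simeq BK\simeq G_u/K$-type considerations are replaced here by the dual picture, and the point is that $\Lambda\backslash G/K$ is a finite-volume locally symmetric space whose cohomology computes $H^{*}_{\Lie}((\fg,K);\fa)$ in the cocompact case, so no flat class on $BG$ can restrict to zero on $B\Lambda$ unless its Lie-algebra shadow already vanishes — and by Corollary \ref{cor:disc1a} run with $\Lambda$ in place of $G$ (where $\kappa^{n}$ for a discrete group is the identity), such a restriction being zero forces the class to be zero.

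Assembling: let $x\in\ker\kappa^{n}\subseteq H^{n}_{\Lie}((\fg,K);\fa)$. Use Theorem \ref{thm:vanEstIsomorphism} and \eqref{eqn:disc2} to lift $x$ to a flat class $\tilde x\in H^{n}_{\loc,s}(G;\fa)$, i.e.\ $\tilde x\in\im\bigl((\fa^{\delta}\to\fa)_{*}\bigr)=\im(\flat^{n})$, well-defined modulo $\im(D^{n-1})$-type indeterminacy; via $\zeta^{n}$ this yields a class in $H^{n}_{\Top}(BG;\fa)$ mapping to $x$. This establishes the surjection $\im\bigl(H^{n}_{\Top}(BG;\fa)\to H^{n}_{\Top}(BG^{\delta};\fa)\bigr)\twoheadrightarrow\ker\kappa^{n}$ (which in fact already follows formally from Theorem \ref{thm:disc} without any lattice, once one tracks that the flat classes on $BG$ surject onto $\ker D^{n}\twoheadrightarrow\ker\kappa^{n}$). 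For injectivity, suppose a class $c\in H^{n}_{\Top}(BG;\fa)$ restricts to $0$ in $H^{n}_{\Top}(BG^{\delta};\fa)$; pull back along $\Lambda\hookrightarrow G^{\delta}$ to get $0$ in $H^{n}_{\op{gp}}(\Lambda;\fa)$, and by the cocompact-lattice rigidity above conclude $c=0$ as an element of $\im\bigl(H^{n}_{\Top}(BG;\fa)\to H^{n}_{\Top}(BG^{\delta};\fa)\bigr)$; equivalently its Lie-algebra image $\kappa^{n}(D^{n}c)$ — which is $0$ since $c$ came from a flat class — determines it. Combining the two directions gives the stated isomorphism. Finally, for the last sentence, recall that every semi-simple Lie group (with finitely many components) admits a cocompact lattice by Borel's theorem on the existence of uniform lattices, so the hypothesis is automatically satisfied; and when $G_{0}$ acts trivially on $\fa$ the standing hypotheses of Theorem \ref{thm:disc} are met, so the corollary applies.
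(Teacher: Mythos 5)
You have the right skeleton — the surjection onto $\ker\kappa^{n}$ is formal from Theorem \ref{thm:disc}, and the cocompact lattice is only needed to make a comparison map into discrete-group cohomology injective — but the key input you invoke is the wrong statement, and as stated it is false. You claim that for a cocompact lattice $\Lambda\leq G$ the restriction $H^{n}_{\Top}(BG;\fa)\to H^{n}_{\op{gp}}(\Lambda;\fa)$ is injective. This map factors through $H^{n}_{\Top}(BG;\fa)\to H^{n}_{\Top}(BG^{\delta};\fa)$, which by Corollary \ref{cor:disc1a}/\ref{cor:disc1b} \emph{vanishes} in positive degrees whenever $\fk\leq\fg$ is n.c.z.; concretely, for $G=\SL_{p}(\bC)$ (which has cocompact lattices) flat bundles have vanishing real Chern classes, so $H^{4}_{\Top}(BG;\R)\cong H^{4}_{\Top}(B\SU_{p};\R)\neq 0$ maps to zero in $H^{4}_{\op{gp}}(\Lambda;\R)$. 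Moreover, if such a rigidity held it would identify the image with all of $H^{n}_{\Top}(BG;\fa)$, contradicting the corollary itself in the n.c.z.\ case where $\ker\kappa^{n}=0$. Two further steps are off for the same reason: your ``real statement to prove'' (a flat class with vanishing image in $H^{n}_{\Lie}((\fg,K);\fa)$ is zero) is vacuous, since $H^{n}_{\loc,s}(G;\fa)\to H^{n}_{\Lie}((\fg,K);\fa)$ is an isomorphism by Theorem \ref{thm:vanEstIsomorphism} and $p^{*}$ — the nontrivial vanishing to rule out is in $H^{n}_{\op{gp}}(G;\fa)$, not in $H^{n}_{\Lie}((\fg,K);\fa)$; and your injectivity argument (``$c$ restricts to $0$, hence $c=0$ as an element of the image'') is a tautology — what must be shown is that then the flat class $\flat^{n}(c)\in H^{n}_{\loc,s}(G;\fa)$ vanishes, so that the assignment from $\im\bigl(H^{n}_{\Top}(BG;\fa)\to H^{n}_{\Top}(BG^{\delta};\fa)\bigr)$ to $\ker\kappa^{n}$ is well defined and bijective.

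The statement that actually closes the gap, and is what the paper cites, is Blanc's theorem: if $G$ admits a cocompact lattice, then $\xi^{n}\from H^{n}_{\loc,s}(G;\fa)\cong H^{n}_{\vE,s}(G;\fa)\to H^{n}_{\op{gp}}(G;\fa)$ is injective (one may equally use injectivity of the restriction to the lattice). Note the source here is the \emph{continuous} cohomology $H^{n}_{\Lie}((\fg,K);\fa)\cong H^{n}_{\Top}(G_{u}/K;\fa)$, not $H^{n}_{\Top}(BG;\fa)\cong H^{n}_{\Top}(BK;\fa)$ — a genuinely different and much weaker assertion than your rigidity claim. Granting it, the proof is short: by commutativity of \eqref{eqn:disc1} and the isomorphisms $\zeta^{n}$, the image of $H^{n}_{\Top}(BG;\fa)\to H^{n}_{\Top}(BG^{\delta};\fa)$ is $\im\bigl(\xi^{n}\circ(\fa^{\delta}\to\fa)_{*}\bigr)$; injectivity of $\xi^{n}$ identifies this with $\im\bigl((\fa^{\delta}\to\fa)_{*}\bigr)=\ker D^{n}$ by exactness of \eqref{eqn:disc2}; and $\ker D^{n}\cong\ker\kappa^{n}$ because $D^{n}$ factors through $\kappa^{n}$ and isomorphisms. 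The final remark about semi-simple groups via Borel's existence theorem for cocompact lattices is correct and agrees with the paper.
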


\begin{proof}
 If $G$ admits a cocompact lattice, then the $\xi^{n}$ is injective by
 \cite[16\textsuperscript{o}
 Th\'eor\`eme]{Blanc85Cohomologie-differentiable-et-changement-de-groupes.} (if
 we use the isomorphism $H^{n}_{\vE,s}(G;\fa)\to H^{n}_{\loc,s}(G;\fa)$ induced
 by $C^{n}_{\vE,s}(G,\fa)\hookrightarrow C^{n}_{\loc,s}(G,\fa)$ to pull back
 $\xi^{n}$ to $H^{n}_{\vE}(G;\fa)$). Thus $\xi^{n}$ maps
 $\ker (D^{n})=\im((\fa^{\delta}\to\fa)_{*})$ isomorphically onto
 $\im(\xi^{n} \circ (\fa^{\delta}\to\fa)_{*})=\im((G^{\delta}\to G)^{*})$. That
 semi-simple Lie groups admit cocompact lattices is \cite[Theorem
 C]{Borel63Compact-Clifford-Klein-forms-of-symmetric-spaces}.
\end{proof}

\begin{remark}\label{rem:flat_characteristic_classes}
 We now interpret diagram \eqref{eqn:disc1} of Theorem \ref{thm:disc} in terms
 of flat characteristic classes. Recall that a flat characteristic class is an
 element in $H^{n}_{\Top}(BG^{\delta};\R)$ (or also
 $H^{n}_{\Top}(BG^{\delta};\Z)$) \cite[Chapter
 9]{Dupont78Curvature-and-characteristic-classes}. Note also that $\xi^{n}$ is
 called characteristic morphism in the theory of flat characteristic classes
 (if one identifies $H^{n}_{\loc,s}(G,\R)$ with $H^{n}_{\Lie}((\fg,K);\R)$ via
 the van Est isomorphism) \cite[Section
 2.3]{Morita01Geometry-of-characteristic-classes}.
 
 Then the image of
 $ \flat^{n}\from H^{n}_{\Top}(BG;\R^{\delta})\to H^{n}_{\loc,s}(G;\R)$
 consists of those cohomology classes that are represented by locally smooth
 cochains that vanish on some identity neighbourhood. These are precisely the
 \emph{flat} cohomology classes in $H^{n}_{\loc,s}(G;\R)$ in the sense that the
 associated Lie algebra cohomology class vanishes (if $n=2$, then the flat
 classes in $H^{2}_{\loc,s}(G;\R)$ are precisely those classes that are
 represented by a flat principal bundle $\R\to \wh{G}\to G$
 \cite{Neeb02Central-extensions-of-infinite-dimensional-Lie-groups}). From
 \eqref{eqn:disc1} it thus follows that the flat classes in
 $H^{n}_{\loc,s}(G;\R)$ map under $\xi^{n}$ to flat characteristic classes.
 
 The relation to our characteristic morphism $\varepsilon^{n}$ is given by
 the diagram
 \begin{equation*}
  \vcenter{ \xymatrix{
  H^{n}_{\Top}(BG;\Z)\ar[dr]^{\flat^{n}}_{\cong} \ar[rrr]^{j_{*}}\ar[ddd]^{(G^{\delta}\to G)^{*}} &&&
  H^{n}_{\Top}(BG;\R)\ar[dl]_{\flat^{n}} \ar[ddd]^{(G^{\delta}\to G)^{*}}\\
  & H^{n}_{\loc,s}(G;\Z)\ar[d]^{(G^{\delta}\to G)^{*}}\ar[r]^{j_{*}=\varepsilon^{n}} & H^{n}_{\loc,s}(G;\R)\ar[d]^{(G^{\delta}\to G)^{*}}\\
  & H^{n}_{\loc,s}(G^{\delta};\Z)\ar[r]^{j_{*}}& H^{n}_{\loc,s}(G^{\delta};\R)\\
  H^{n}_{\Top}(BG^{\delta};\Z)\ar[ur]^{\flat^{n}}_{\cong}\ar[rrr]^{j_{*}} &&&H^{n}_{\Top}(BG^{\delta};\R)\ar[ul]_{\flat^{n}}^{\cong}
  }},
 \end{equation*}
 which commutes by the naturality of the involved morphisms. Note that
 \begin{equation*}
  (G^{\delta}\to G)^{*}\from H^{n}_{\loc,s}(G;\Z)\cong H^{n}_{\Top}(BG;\Z)\to H^{n}_{\loc,s}(G^{\delta};\Z)\cong H^{n}_{\Top}(BG^{\delta};\Z)
 \end{equation*}
 is injective by \cite[Corollary
 1]{Milnor83On-the-homology-of-Lie-groups-made-discrete}. If we assume,
 moreover, that $G$ is semi-simple, then
 \begin{equation*}
  (G^{\delta}\to G)^{*}\from H^{n}_{\loc,s}(G,\R)\cong H^{n}_{\vE}(G,\R)\to H^{n}_{\loc,s}(G^{\delta},\R)= H^{n}_{\op{gp}}(G;\R)
 \end{equation*}
 is also injective (cf.\ Corollary \ref{cor:disc2}). Thus our characteristic
 morphism \emph{coincides} (on the image of $(G^{\delta}\to G)^{*}$) with
 \begin{equation*}
  j^{*}\from H^{n}(BG^{\delta};\Z)\to H^{n}(BG^{\delta};\R).
 \end{equation*}
\end{remark}

\section{Subalgebras non-cohomologous to zero} %
\label{sec:subalgebras_non_cohomologous_to_zero}

\begin{tabsection}
 In this section we will analyse under which conditions all characteristic
 morphisms vanish. The setting is the same as in Section
 \ref{sec:the_relation_to_relative_lie_algebra_cohomology}.
\end{tabsection}

\begin{definition}(cf.\
 \cite[Section
 X.5]{GreubHalperinVanstone76Connections-curvature-and-cohomology}) Let
 $\fh\leq\fg$ be a subalgebra. We say that $\fh\leq\fg$ is non-cohomologous to
 zero (shortly n.c.z.) for $\fa$ if
 \begin{equation*}
  \kappa^{n}\from H^{n}_{\Lie}((\fg,\fh);\fa)\to H^{n}_{\Lie}(\fg;\fa)
 \end{equation*}
 is injective for all $n\in \N_{0}$. If $\fh\leq\fg$ is n.c.z.\ for $\fa=\R$,
 then we simply say that $\fh\leq\fg$ is n.c.z. More generally, we say that the
 maximal compact subgroup $K\leq G$ s n.c.z.\ for $\fa$ if
 \begin{equation*}
  \kappa^{n}\from H^{n}_{\Lie}((\fg,K);\fa)\to H^{n}_{\Lie}(\fg;\fa)
 \end{equation*}
 is injective for all $n\in\N_{0}$ and shortly that $K\leq G$ is n.c.z.\ if it
 is so for $\fa=\R$.
\end{definition}

Note that for $G$ connected we have that $\fk\leq\fg$ is n.c.z.\ for $\fa$ if
and only if $K\leq G$ is n.c.z.

\begin{proposition}\label{prop:Hloc_as_abelian_group}
 If $K\leq G$ is n.c.z., then all characteristic morphisms
 $\tilde{\varepsilon}^{n}$ vanish and the long exact sequence from Proposition
 \ref{prop:characteristic_morphism} splits for each $n\geq 1$ into short exact
 sequences
 \begin{equation}\label{eqn:Hloc_as_abelian_group_short_exact_seq}
  0\to H^{n}_{\loc}((G,K);A)\xrightarrow{p^{*}} H^{n}_{\loc}(G;A)\xrightarrow{i^{*}} H^{n}_{\loc}(K;A) \to 0
 \end{equation}
 In particular,
 $i^{*}\from H^{n}_{\loc}(G;A)\to H^{n}_{\loc}(K;A)$ is then surjective and
 $p^{*}\from H^{n}_{\loc}((G,K);A)\to H^{n}_{\loc}(G;A)$ is then injective for
 each $n\geq 1$. Moreover, we have in this case
 \begin{equation}\label{eqn:Hloc_as_abelian_group_split_exact_seq}
  H^{n}(G,A)\cong H^{n}_{\Lie}((\fg,K),\fa)\oplus H^{n+1}_{\pi_{1}(BG)}(BG;\Gamma)
 \end{equation}
 as abelian groups.
\end{proposition}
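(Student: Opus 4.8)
The plan is to feed the n.c.z.\ hypothesis into Corollary~\ref{cor:vanishing_of_characteristic_homomorphism} and then extract both assertions from exact sequences that are already available: the sequence of Proposition~\ref{prop:characteristic_morphism} for the short exact sequences \eqref{eqn:Hloc_as_abelian_group_short_exact_seq}, and the coefficient sequence \eqref{eqn:long_exact_sequence_1} for $\Gamma\to\fa\to A$ together with the van Est isomorphism for the splitting \eqref{eqn:Hloc_as_abelian_group_split_exact_seq}.

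First I would observe that the n.c.z.\ hypothesis makes $\kappa^{n}\from H^{n}_{\Lie}((\fg,K);\fa)\to H^{n}_{\Lie}(\fg;\fa)$ injective for every $n$ (for a trivial module this follows from the n.c.z.\ condition over $\R$, since then $C^{\bullet}_{\CE}((\fg,K),\fa)=C^{\bullet}_{\CE}((\fg,K),\R)\otimes_{\R}\fa$ and hence $\kappa^{n}_{\fa}=\kappa^{n}_{\R}\otimes_{\R}\id_{\fa}$, which is injective once $\kappa^{n}_{\R}$ is). Corollary~\ref{cor:vanishing_of_characteristic_homomorphism} then gives that all characteristic morphisms $\varepsilon^{n}$ and $\tilde{\varepsilon}^{n}$ vanish. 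Feeding $\tilde{\varepsilon}^{n-1}=\tilde{\varepsilon}^{n}=0$ into the exact sequence \eqref{eqn:log_exact_sequence_restriction_inflation} of Proposition~\ref{prop:characteristic_morphism} makes $p^{*}$ injective, $i^{*}$ surjective and the three-term piece around $H^{n}_{\loc}(G;A)$ short exact for $n\geq 1$; this is exactly \eqref{eqn:Hloc_as_abelian_group_short_exact_seq}, and the ``in particular'' statements are then immediate.

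Next I would identify the outer terms of \eqref{eqn:Hloc_as_abelian_group_short_exact_seq}. On the left, Theorem~\ref{thm:vanEstIsomorphism} (combined with $H^{n}_{\loc}\cong H^{n}_{\loc,s}$) gives $H^{n}_{\loc}((G,K);A)\cong H^{n}_{\Lie}((\fg,K);\fa)$. On the right, compactness of $K$ gives $H^{m}_{\loc}(K;\fa)=0$ for $m\geq 1$ (as in the proof of Proposition~\ref{prop:characteristic_morphism}), so the coefficient sequence over $K$ yields $H^{n}_{\loc}(K;A)\cong H^{n+1}_{\loc}(K;\Gamma)\cong H^{n+1}_{\pi_{1}(BK)}(BK;\Gamma)$, and the homotopy equivalence $Bi\from BK\to BG$ identifies this with $H^{n+1}_{\pi_{1}(BG)}(BG;\Gamma)$. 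Hence \eqref{eqn:Hloc_as_abelian_group_short_exact_seq} reads
\begin{equation*}
 0\to H^{n}_{\Lie}((\fg,K);\fa)\to H^{n}(G;A)\to H^{n+1}_{\pi_{1}(BG)}(BG;\Gamma)\to 0.
\end{equation*}
The same sequence also drops out of \eqref{eqn:long_exact_sequence_1} directly, its connecting maps being the now-vanishing $\varepsilon^{n}$.

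It remains to split this. The point is that the kernel term is an injective object of $\cat{Ab}$: since $\fa$ is a real locally convex space, $C^{n}_{\CE}((\fg,K),\fa)=\Hom_{\R}(\Lambda^{n}(\fg/\fk),\fa)^{K}$ and hence its subquotient $H^{n}_{\Lie}((\fg,K);\fa)$ is a real vector space, in particular a divisible, hence injective, abelian group. A short exact sequence of abelian groups with injective kernel splits, which gives \eqref{eqn:Hloc_as_abelian_group_split_exact_seq}. The only points demanding real care are the coefficient-module reduction in the second paragraph (so that the $\kappa^{n}$ occurring in Corollary~\ref{cor:vanishing_of_characteristic_homomorphism} is indeed the one relevant to $\fa$) and checking that the chain of identifications of the outer terms is compatible with the maps of \eqref{eqn:Hloc_as_abelian_group_short_exact_seq}; everything else is an assembly of results already established.
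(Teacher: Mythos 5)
Your proposal is correct and follows essentially the same route as the paper: vanishing of the characteristic morphisms via Corollary \ref{cor:vanishing_of_characteristic_homomorphism}, splitting of the sequence from Proposition \ref{prop:characteristic_morphism}, identification of the outer terms by Theorem \ref{thm:vanEstIsomorphism} and Remark \ref{rem:identifications_for_log_exact_sequence_restriction_inflation}, and the abelian-group splitting from divisibility (injectivity) of the real vector space $H^{n}_{\Lie}((\fg,K);\fa)$. Your explicit reduction of the n.c.z.\ hypothesis over $\R$ to injectivity of $\kappa^{n}$ with coefficients in $\fa$ (in the trivial-module case) is a point the paper's proof passes over silently, so that extra care is welcome rather than a deviation.
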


\begin{proof}
 From Corollary \ref{cor:vanishing_of_characteristic_homomorphism} we
 immediately deduce the splitting of the long exact sequence. By Theorem
 \ref{thm:vanEstIsomorphism} we have
 $H^{n}_{\loc}((G,K);A)\cong  H^{n}_{\Lie}((\fg,K),\fa)$ and as in Remark
 \ref{rem:identifications_for_log_exact_sequence_restriction_inflation} we see
 that $H^{n}_{\loc}(K;A)\cong H^{n+1}_{\pi_{1}(BG)}(BG;\Gamma)$. Since
 $H^{n}_{\Lie}((\fg,K),\fa)$ is a divisible abelian group the short exact
 sequence splits.
\end{proof}

\begin{tabsection}
 Determining whether $\fk\leq\fg$ is n.c.z.\ is particularly convenient
 for semi-simple $\fg$ by the Cartan decomposition.
\end{tabsection}

\begin{remark}\label{rem:cartan_decomposition}
 Suppose $\fg$ is semi-simple and $\fg=\fk\oplus\fp$ is a Cartan decomposition
 of $\fg$. Then we have $[\fk,\fk]\se\fk$, $[\fk,\fp]\se \fp$ and
 $[\fp,\fp]\se\fk$, and we denote by $\fg_{u}:=\fk\oplus i\fp$ the Lie algebra
 with the same underlying vector space and bracket defined for $x,y\in\fk$ and
 $v,w\in\fp$ by
 \begin{equation*} [x,y]_{u}:=[x,y], \quad
  [x,v]_{u}:=[x,v], \quad [v,w]_{u}:=-[v,w].
 \end{equation*}
 Then $\fg_{u}$ is a compact real form of the complexification $\fg_{\bC}$ and
 $\fk$ is a subalgebra of $\fg_{u}$. More precisely, $\fg_{u}$ is isomorphic to
 the subalgebra which is the direct sum of $\fk$ and $i\fp$ as subspaces of
 $\fg_{\bC}$ (see \cite[Section
 13.1+2]{HilgertNeeb12Structure-and-geometry-of-Lie-groups} or \cite[Section
 III.7]{Helgason78Differential-geometry-Lie-groups-and-symmetric-spaces} for
 details). Moreover, we have the identity $\fg=\fg_{u}$ as $\fk$-modules. If
 $\fa$ is the trivial $\fg$-module (also considered as tirival
 $\fg_{u}$-module), then this identity induces an isomorphism of $\fk$-modules
 \begin{equation}\label{eqn:isomorphism_of_relative_lie_algebra_cohomology_to_compact_dual}
  \{\omega\from \Lambda^{n}\fg\to\fa\mid i_{y}(\omega)=0 \text{ for all }y\in\fk\}=
  \{\omega\from \Lambda^{n}\fg_{u}\to\fa\mid i_{y}(\omega)=0 \text{ for all }y\in\fk\}
 \end{equation}
 and thus an isomorphism
 $\mu^{n}\from  H^{n}_{\Lie}((\fg,\fk);\fa)\xrightarrow{\cong} H^{n}_{\Lie}((\fg_{u},\fk);\fa)$.
\end{remark}

\begin{lemma}\label{lem:kappa_invective_iff_for_compact_dual}
 Suppose $\fg$ is real semi-simple, $\fg=\fk\oplus\fp$ is a Cartan
 decomposition of $\fg$ and set $\fg_{u}:=\fk\oplus i\fp$. If $\fa$ is the
 trivial $\fg$- and $\fg_{u}$-module, then
 $\kappa^{n}\from H^{n}_{\Lie}((\fg,\fk);\fa)\to H^{n}_{\Lie}(\fg;\fa)$ is
 injective if and only if
 $\kappa^{n}\from H^{n}_{\Lie}((\fg_{u},\fk);\fa)\to H^{n}_{\Lie}(\fg;\fa)$ is
 injective. In particular, $\fk\leq\fg$ is n.c.z.\ for $\fa$ if and only if
 $\fk\leq\fg_{u}$ is n.c.z.\ for $\fa$.
\end{lemma}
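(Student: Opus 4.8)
The plan is to exploit the isomorphism $\mu^{n}\from H^{n}_{\Lie}((\fg,\fk);\fa)\xrightarrow{\cong} H^{n}_{\Lie}((\fg_{u},\fk);\fa)$ from Remark \ref{rem:cartan_decomposition} together with the fact (also recorded there) that $\fg=\fg_{u}$ as $\fk$-modules. The key point is that $\kappa^{n}$ is an \emph{edge homomorphism} in the Hochschild--Serre/Koszul spectral sequence $E_{2}^{p,q}=H^{p}_{\Lie}(\fk;\R)\otimes H^{q}_{\Lie}((\fg,\fk);\fa)\Rightarrow H^{p+q}_{\Lie}(\fg;\fa)$, and the analogous statement holds for $\fg_{u}$. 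Since this spectral sequence is built out of the filtered Chevalley--Eilenberg complex of the pair, and the relevant cochain data $\Hom(\Lambda^{\bullet}\fg/\fk,\fa)^{\fk}=\Hom(\Lambda^{\bullet}\fg_{u}/\fk,\fa)^{\fk}$ agree by the $\fk$-module identification, one expects the two spectral sequences to be closely related, and hence the injectivity of the two edge maps to be equivalent. I would, however, prefer to avoid tracking the full spectral sequence and instead argue more directly as follows.

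First I would set up the commuting square
\begin{equation*}
 \xymatrix{
 H^{n}_{\Lie}((\fg,\fk);\fa)\ar[r]^{\kappa^{n}}\ar[d]^{\mu^{n}}_{\cong} & H^{n}_{\Lie}(\fg;\fa)\\
 H^{n}_{\Lie}((\fg_{u},\fk);\fa)\ar[r]^{\kappa^{n}} & H^{n}_{\Lie}(\fg_{u};\fa)\ar@{--}[u]
 }
\end{equation*}
and observe that it would suffice to produce a (not necessarily iso) morphism $H^{n}_{\Lie}(\fg_{u};\fa)\to H^{n}_{\Lie}(\fg;\fa)$, or the reverse, making the square commute and that is compatible with the edge-map description. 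The cleanest route is to recall the classical fact, due essentially to Cartan and used pervasively in the theory of n.c.z.\ subalgebras (e.g.\ \cite[Section X.5]{GreubHalperinVanstone76Connections-curvature-and-cohomology}), that for a reductive subalgebra $\fh\leq\fg$ with $\fa$ trivial, the injectivity of $\kappa^{n}$ for all $n$ is equivalent to the multiplicativity/freeness statement $H^{\bullet}_{\Lie}(\fg;\fa)\cong H^{\bullet}_{\Lie}(\fh;\fa)\otimes H^{\bullet}_{\Lie}((\fg,\fh);\fa)$, i.e.\ the collapse of the Koszul spectral sequence, and this in turn depends only on the $\fh$-module structure of $\fg$ via $\Lambda^{\bullet}(\fg/\fh)$ and the restriction map $H^{\bullet}_{\Lie}(\fg;\R)\to H^{\bullet}_{\Lie}(\fh;\R)$. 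Since $\fg$ and $\fg_{u}$ are identical as $\fk$-modules, and both are semisimple with the \emph{same} maximal compact $\fk$ (in the $\fg_{u}$ case $\fk\leq\fg_{u}$ is itself maximal compact-type), the decisive datum — surjectivity of $i^{*}\from H^{\bullet}_{\Lie}(\fg;\R)\to H^{\bullet}_{\Lie}(\fk;\R)$ versus $H^{\bullet}_{\Lie}(\fg_{u};\R)\to H^{\bullet}_{\Lie}(\fk;\R)$ — will be what I need to compare.

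Concretely, the steps in order: (1) reduce to $\fa=\R$ by tensoring (the trivial module $\fa$ is a direct sum/product of copies of $\R$ at the level of these finite-type cochain complexes, and $\kappa^{n}$ is $\R$-linear and functorial in $\fa$), so n.c.z.\ for $\fa$ reduces to n.c.z.\ for $\R$ — this gives the "In particular" clause once the main equivalence is proven for $\R$; (2) invoke the characterization that $\fk\leq\fg$ is n.c.z.\ iff $i^{*}\from H^{\bullet}_{\Lie}(\fg;\R)\to H^{\bullet}_{\Lie}(\fk;\R)$ is surjective (standard, via the spectral sequence); (3) recall the classical theorem (Cartan, or see \cite[Section III.7]{Helgason78Differential-geometry-Lie-groups-and-symmetric-spaces}, \cite[Section 13.1+2]{HilgertNeeb12Structure-and-geometry-of-Lie-groups}) that $H^{\bullet}_{\Lie}(\fg;\R)$ is computed by the invariant forms and is isomorphic, as a graded algebra, to $H^{\bullet}_{\Lie}(\fg_{u};\R)$ — indeed $H^{\bullet}_{\Lie}(\fg;\R)\cong H^{\bullet}_{\Lie}(\fg_{\bC};\bC)^{\text{real}}\cong H^{\bullet}(G_{u};\R)$ — and that under this isomorphism the restriction to $\fk$ corresponds on both sides to restriction along the same inclusion $\fk\hookrightarrow$; (4) conclude that $i^{*}$ is surjective for $\fg$ iff it is for $\fg_{u}$, hence $\fk\leq\fg$ n.c.z.\ iff $\fk\leq\fg_{u}$ n.c.z.; (5) run the same argument degreewise (not just in top degree) to get the statement "$\kappa^{n}$ injective for $\fg$ iff for $\fg_{u}$" for each individual $n$, using that the comparison isomorphism of algebras is graded.

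The main obstacle will be step (3)/(4): making precise that the isomorphism $H^{\bullet}_{\Lie}(\fg;\R)\cong H^{\bullet}_{\Lie}(\fg_{u};\R)$ is compatible with restriction to the common subalgebra $\fk$. One must be careful that the "unitary trick" isomorphism passes through the complexification $\fg_{\bC}=(\fg_{u})_{\bC}$ and the averaging/Weyl-unitarian argument, and that $\fk$ sits in both $\fg$ and $\fg_{u}$ as the \emph{same} real subalgebra (not merely an isomorphic one) — which is exactly the content of $\fg_{u}=\fk\oplus i\fp$. Once that compatibility square is in hand, everything else is formal homological algebra about edge homomorphisms, and I expect no further difficulty. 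An alternative, perhaps slicker, finish for step (5) is to note that $\mu^{n}$ identifies the \emph{sources} of the two $\kappa^{n}$'s and that the diagram
\begin{equation*}
 \xymatrix{
 H^{n}_{\Lie}((\fg,\fk);\fa)\ar[r]^{\mu^{n}}_{\cong}\ar[d]^{\kappa^{n}} & H^{n}_{\Lie}((\fg_{u},\fk);\fa)\ar[d]^{\kappa^{n}}\\
 H^{n}_{\Lie}(\fg;\fa)\ar[r] & H^{n}_{\Lie}(\fg_{u};\fa)
 }
\end{equation*}
commutes with the bottom arrow the comparison isomorphism, so that the two vertical maps have the same kernel, giving the claim directly.
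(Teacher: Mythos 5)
The decisive gap is the comparison map between the absolute cohomologies. Both your main route (steps (3)--(4)) and your ``slicker finish'' require an isomorphism $H^{n}_{\Lie}(\fg;\fa)\to H^{n}_{\Lie}(\fg_{u};\fa)$ defined over $\R$ that is compatible with $\mu^{n}$ and the two $\kappa^{n}$'s (equivalently, a graded-algebra isomorphism $H^{\bullet}_{\Lie}(\fg;\R)\cong H^{\bullet}_{\Lie}(\fg_{u};\R)$ compatible with restriction to the common subalgebra $\fk$). No such natural real map exists: $\fg$ and $\fg_{u}$ share the underlying vector space but have different brackets, so the identity is not a morphism of Lie algebras, and the only natural comparison passes through the common complexification $\fg_{\bC}=(\fg_{u})_{\bC}$. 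There, a real-valued cocycle on $\fg$, extended complex-linearly and restricted to $\fg_{u}\se\fg_{\bC}$, in general takes genuinely complex values (since $\fg_{u}$ contains $i\fp$); thus $H^{n}_{\Lie}(\fg;\fa)$ and $H^{n}_{\Lie}(\fg_{u};\fa)$ are two \emph{different} real forms of $H^{n}_{\Lie}(\fg_{\bC};\fa_{\bC})$ and need not correspond under the natural complexified identification, so the bottom arrow of your square is not available as stated. The paper's proof is precisely the repaired version of your slick finish: it complexifies the targets, using the injections $H^{n}_{\Lie}(\fg;\fa)\hookrightarrow H^{n}_{\Lie}(\fg;\fa)\otimes\bC\cong H^{n}_{\Lie}(\fg_{\bC};\fa_{\bC})$ and $H^{n}_{\Lie}(\fg_{u};\fa)\hookrightarrow H^{n}_{\Lie}(\fg_{u};\fa)\otimes\bC\cong H^{n}_{\Lie}((\fg_{u})_{\bC};\fa_{\bC})=H^{n}_{\Lie}(\fg_{\bC};\fa_{\bC})$ together with $\mu^{n}$ on the relative side; since real cohomology injects into its complexification, injectivity of either $\kappa^{n}$ is equivalent to injectivity of the common composite into $H^{n}_{\Lie}(\fg_{\bC};\fa_{\bC})$, and the lemma follows degree by degree from that single commuting diagram --- no n.c.z.\ characterization, unitary trick for the algebra structure, or Koszul spectral sequence is needed.

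Your main route has a second, independent problem: the facts invoked in steps (2)--(4) (n.c.z.\ iff surjectivity of $i^{*}\from H^{\bullet}_{\Lie}(\fg;\R)\to H^{\bullet}_{\Lie}(\fk;\R)$, collapse of the Koszul spectral sequence) are statements about \emph{all} degrees simultaneously, whereas the lemma asserts an equivalence for each fixed $n$. Step (5), ``run the same argument degreewise,'' is not justified: injectivity of $\kappa^{n}$ in a single degree is not detected by surjectivity of $i^{*}$ in single degrees, so at best this route would recover the ``in particular'' clause, not the stated lemma. (The reduction to $\fa=\R$ in step (1) is harmless, since the action is trivial and $\Lambda^{n}\fg$ is finite-dimensional, but it is also unnecessary: the complexification argument works verbatim with coefficients $\fa$ and $\fa_{\bC}:=\fa\otimes_{\R}\bC$.)
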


\begin{proof}
 The diagram
 \begin{equation*}
  \xymatrix{
  H^{n}_{\Lie}((\fg,\fk);\fa)\ar[r]^{\kappa^{n}}\ar[d]^{\cong} & H^{n}_{\Lie}(\fg;\fa)\lhook\mkern-7mu \ar[r]& H^{n}_{\Lie}(\fg;\fa)\otimes \bC \ar[r]^{\cong} & H^{n}_{\Lie}(\fg_{\bC},\fa_{\bC})\ar[d]^{\cong}\\
  H^{n}_{\Lie}((\fg_{u},\fk);\fa)\ar[r]^{\kappa^{n}} & H^{n}_{\Lie}(\fg_{u};\fa)\lhook\mkern-7mu \ar[r]& H^{n}_{\Lie}(\fg_{u};\fa)\otimes \bC \ar[r]^{\mu}_{\cong} & H^{n}_{\Lie}((\fg_{u})_{\bC},\fa_{\bC})\\
  }
 \end{equation*}
 commutes. This shows the claim.
\end{proof}

\begin{remark}\label{rem:compact_dual}
 The big advantage of $H^{n}_{\Lie}((\fg_{u},\fk);\fa)$ over
 $H^{n}_{\Lie}((\fg,\fk);\fa)$ is that $\fg_{u}$ is a compact Lie algebra, and
 thus $H^{n}_{\Lie}((\fg_{u},\fk);\fa)$ can be accessed as the de Rham
 cohomology of a \emph{compact} symmetric space. Let $\wt{G}_{u}$ be the simply
 connected Lie group with Lie algebra $\fg_{u}$. Then the embedding
 $\fk\hookrightarrow \fg_{u}$ induces an embedding
 $\wt{K}\hookrightarrow\wt{G}_{u}$. In particular, $\pi_{1}(K)$ embeds into
 $\wt{G}_{u}$ and we set $G_{u}:=\wt{G}_{u}/\pi_{1}(K)$. From this it is clear
 that $K=\wt{K}/\pi_{1}(K)$ embeds into $G_{u}$ and we will identify $K$ via
 this embedding with a subgroup of $G_{u}$. We call the pair $(G_{u},K)$ the
 \emph{dual pair} of $(G,K)$. Note that the property that $K$ embeds into
 $G_{u}$ determines uniquely the quotient of $\wt{G}_{u}$ that we have to take.
 If $G$ is linear, then another method for obtaining $G_{u}$ is to take a
 maximal compact subgroup of the complexification $G_{\bC}$ that contains $K$.
 However, the complexification exists in the semi-simple case if and only if
 $G$ is linear (cf.\ \cite[Proposition
 16.1.3]{HilgertNeeb12Structure-and-geometry-of-Lie-groups}).
 
 Now there is the canonical morphism
 $\nu\from H^{n}_{\Lie}((\fg_{u},\fk);\fa)\to H^{n}_{\dR}(G_{u}/K;\fa)\cong H^{n}_{\Top}(G_{u}/K;\fa)$
 that maps $\omega$ to the left invariant differential form on $G_{u}/K$ with
 value $\omega$ in $T_{e}(G_{u}/K)\cong \fg_{u}/\fk$. This is an isomorphism,
 for instance by \cite[Proposition
 XI.1.I]{GreubHalperinVanstone76Connections-curvature-and-cohomology} or
 \cite[Theorem 1.28]{FelixOpreaTanre08Algebraic-models-in-geometry}.
\end{remark}

\begin{example}\label{exmp:ncz}
 \begin{enumerate}
  \item \label{exmp:ncz1} If $G$ is compact, then $\fk\leq \fg$ is clearly
        n.c.z.\ for each $\fa$.
  \item \label{exmp:ncz2} Suppose $G$ is complex simple, considered as a simple
        real Lie group and $\fa$ is the trivial module. Then $\fk$ is a compact
        real form of $\fg$ and $\fk\oplus i\fk$ is a Cartan decomposition of
        $\fg$. Consequently, $\fg_{u}=\fk_{1}\oplus\fk_{2}$ with $\fk_{i}:=\fk$
        (we introduced the indices to distinguish the different copies of
        $\fk$). Then $\fk$ embeds as $\fk_{1}$ into $\fg_{u}$ and we have
        \begin{equation*}
         H^{n}_{\Lie}((\fk_{1}\oplus\fk_{2},\fk_{1});\fa)\cong H^{n}( \Hom(\Lambda^{\bullet}\fk_{2},\fa)^{\fk_{1}})\cong H^{n}( \Hom(\Lambda^{\bullet}\fk_{2},\fa))\cong H^{n}_{\Lie}(\fk_{2},\fa),
        \end{equation*}
        which embeds into $H^{n}_{\Lie}(\fk_{1}\oplus\fk_{2},\fa)$ by the
        K\"unneth Theorem.
  \item \label{exmp:ncz3} If $\fa$ is the trivial $\fg$-module, then
        $\fh\leq \fg$ is n.c.z.\ for $\fa$ if and only if
        $H^{*}_{\Lie}((\fg,\fh);\fa)$ is generated by $1$ and elements of odd
        degree \cite[Theorem
        X.10.19]{GreubHalperinVanstone76Connections-curvature-and-cohomology}.
        Moreover, this is the case if and only if
        $H^{*}_{\Lie}((\fg_{u},\fh);\fa)$ is n.c.z., which in turn is
        equivalent to $H^{n}_{\Top}(G_{u}/K;\fa)$ being generated by $1$ and
        elements of odd degree \cite[Theorem
        11.5.VI]{GreubHalperinVanstone76Connections-curvature-and-cohomology}.
 \end{enumerate}
\end{example}

\begin{example}[$G=\SL_{2q+1}$]\label{exmp:n.c.z.}
 Let $G=\SL_{p}$ with $p=2q+1\geq 3$ odd. Then $K=\SO_{p}$ and $G_{u}=\SU_{p}$.
 Then we have by \cite[Theorem
 III.6.7]{MimuraToda91Topology-of-Lie-groups.-I-II} that
 $H^{*}_{\Top}(\SU_{p}/\SO_{p};\R)$ is generated by $1$ and elements of odd
 degree. Thus $\fk\leq\fg$ is n.c.z.\ by Example \ref{exmp:ncz} \ref{exmp:ncz3}
 and we have the description of $H^{n}(\SL_{2q+1}(\R);U(1))$ from
 \eqref{eqn:Hloc_as_abelian_group_split_exact_seq}.
\end{example}

From Corollary \ref{cor:disc1a} we also obtain immediately

\begin{corollary}\label{cor:disc1b}
 Suppose $\fa$ is the trivial $\fg$-module. If $\fk\leq\fg$ is n.c.z.\ for
 $\fa$, then
 $H^{n}_{\Top}(BG;\fa^{\delta})\to H^{n}_{\Top}(BG^{\delta};\fa^{\delta})$
 vanishes.
\end{corollary}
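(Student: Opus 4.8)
The plan is to deduce the statement directly from Corollary \ref{cor:disc1a}; the only thing to check is that its hypotheses are satisfied with $\fa$ as the coefficient module (note that $\fa$ and $\fa^{\delta}$ give the same $H^{n}_{\Top}$, since that functor only sees the underlying abstract abelian group). For the hypotheses of Theorem \ref{thm:disc}: by the standing assumptions of this section $G$ is a finite-dimensional Lie group with finitely many components and $\fa$ is a quasi-complete $G$-module, and since $\fa$ is the trivial $\fg$-module the connected subgroup $G_{0}$ acts trivially on $\fa$ (a smooth representation of a connected Lie group being determined by the associated Lie algebra representation). So Theorem \ref{thm:disc}, and hence Corollary \ref{cor:disc1a}, is available.

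The one point that needs a word is that Corollary \ref{cor:disc1a} asks for the injectivity of $\kappa^{n}\from H^{n}_{\Lie}((\fg,K);\fa)\to H^{n}_{\Lie}(\fg;\fa)$, whereas the hypothesis here is that $\fk\leq\fg$ is n.c.z.\ for $\fa$, i.e.\ $\kappa^{n}\from H^{n}_{\Lie}((\fg,\fk);\fa)\to H^{n}_{\Lie}(\fg;\fa)$ is injective. To bridge this I would use that $K/K_{0}$ is finite and that $C^{\bullet}_{\CE}((\fg,K),\fa)=\big(C^{\bullet}_{\CE}((\fg,\fk),\fa)\big)^{K/K_{0}}$, the $\fk$-invariants being precisely the $K_{0}$-invariants. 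Since taking invariants under a finite group is exact on real vector spaces, this identifies $H^{n}_{\Lie}((\fg,K);\fa)$ with the subspace $H^{n}_{\Lie}((\fg,\fk);\fa)^{K/K_{0}}$ of $H^{n}_{\Lie}((\fg,\fk);\fa)$, and under this identification the map $\kappa^{n}$ for the pair $(\fg,K)$ is simply the restriction of $\kappa^{n}$ for the pair $(\fg,\fk)$; the latter being injective, so is the former. (When $G$ is connected this step is unnecessary, since then the two notions of n.c.z.\ for $\fa$ coincide, as noted after the definition.)

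With both points in place, Corollary \ref{cor:disc1a} applies and gives that $H^{n}_{\Top}(BG;\fa^{\delta})\to H^{n}_{\Top}(BG^{\delta};\fa^{\delta})$ vanishes, which is the assertion. I do not expect any real obstacle here: the only slightly delicate ingredient is the passage between the $(\fg,\fk)$- and $(\fg,K)$-versions of relative Lie algebra cohomology in the previous paragraph, and even that reduces to the exactness of finite-group invariants; everything else is an immediate appeal to Corollary \ref{cor:disc1a}.
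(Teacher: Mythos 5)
Your proposal is correct and follows the paper's own route: the paper obtains this corollary immediately from Corollary \ref{cor:disc1a}, exactly as you do, after checking the hypotheses of Theorem \ref{thm:disc}. Your averaging argument identifying $H^{n}_{\Lie}((\fg,K);\fa)$ with $H^{n}_{\Lie}((\fg,\fk);\fa)^{K/K_{0}}$ correctly supplies the passage from the $(\fg,\fk)$- to the $(\fg,K)$-version of $\kappa^{n}$ that the paper leaves implicit for disconnected $G$.
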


\begin{tabsection}
 Note that Corollary \ref{cor:disc1b}, together with Example \ref{exmp:ncz}
 \ref{exmp:ncz1} and \ref{exmp:ncz2} give the well-known vanishing of
 $H^{n}_{\Top}(BG;\fa^{\delta})\to H^{n}_{\Top}(BG^{\delta};\fa^{\delta})$ in
 case that $G$ is either compact or $G$ is complex and semi-simple with
 finitely many components. The latter is usually proved directly via Chern-Weil
 theory, cf.\ \cite[Section 5.1]{Knudson01Homology-of-linear-groups} or
 \cite{Milnor83On-the-homology-of-Lie-groups-made-discrete} and the relation of
 the Chern-Weil homomorphism to the van Est cohomology
 \cite{Bott73On-the-Chern-Weil-homomorphism-and-the-continuous-cohomology-of-Lie-groups}.
 This is also implicit in here, as the next section shows.
\end{tabsection}

\section{Semi-simple Lie groups} %
\label{sec:semi_simple_lie_groups}

\begin{tabsection}
 In this section we will compute the characteristic homomorphism in terms of
 the Chern-Weil homomorphism of the compact dual of the symmetric space
 naturally associated to the non-compact symmetric space $G/K$. In particular,
 this will enable us to analyse cases in which not all characteristic
 homomorphisms vanish.
 
 The Setting is the same as in Section
 \ref{sec:the_relation_to_relative_lie_algebra_cohomology}, except that we
 assume, in addition, that $\fg$ is semi-simple and the induced $\fg$ module
 structure on $\fa$ is trivial\footnote{It would be desirable to have the
 results of this and the preceding section also for non-trivial coefficients.
 However, the techniques presented in this paper do not simply generalise to
 non-trivial coefficients for the following reasons: $A^{\delta}$ might not be
 a $G$-module any more; $\fa$ is not a $\fg_{u}$-module in a natural way; the
 Weil homomorphism is not well-defined, since the identity
 $W^{1}_{(\fg,\fv)}=d_{\CE}\circ \pi_{\fv}^{*}-\pi_{\fv}^{*}\circ d_{\CE}$ does
 not hold any more.}. Moreover, we choose and fix a Cartan decomposition
 $\fg=\fk\oplus \fp$ of $\fg$. We will use the notation from Remark
 \ref{rem:cartan_decomposition} and Remark \ref{rem:compact_dual}.
\end{tabsection}

\begin{theorem}\label{thm:characteristic_morphims_is_CW}
 Suppose $G$ is a connected finite-dimensional Lie group, that $\fg=L(G)$ is
 semi-simple and that $G$ acts trivially on the quasi-complete locally convex
 space $\fa$. If $f\from G_{u}/K\to BK$ is a classifying map for the principal
 $K$-bundle $G_{u}\to G_{u}/K$, then the diagram
 \begin{equation}\label{eqn:characteristic_morphims_is_CW}
  \vcenter{\xymatrix{
  H^{n}_{\loc,s}(G;\Gamma)\ar[dd]^{j_{*}=\varepsilon^{n}} 
  \ar[rr]^{(\flat^{n})^{-1}} &&
  H^{n}_{\Top}(BG;\Gamma)
  \ar[rr]^{Bi^{*}} &&
  H^{n}_{\Top}(BK;\Gamma) 
  \ar[d]^{j_{*}} &       
  \\
  &&&&   H_{\Top}^{n}(BK;\fa)
  \ar[d]^{f^{*}}
  \\
  H^{n}_{\loc,s}(G;\fa)
  \ar[r]^(.425){(p^{*})^{-1}} &
  H^{n}_{\loc,s}((G,K);\fa)
  \ar[r]^(.55){D^{n}} &
  H^{n}_{\Lie}((\fg,\fk);\fa)
  \ar[r]^{\mu^{n}} &
  H^{n}_{\Lie}((\fg_{u},\fk);\fa)
  \ar[r]^{\nu^{n}} &
  H_{\Top}^{n}(G_{u}/K;\fa)
  }}
 \end{equation}
 commutes and all horizontal morphisms are in fact isomorphisms.
\end{theorem}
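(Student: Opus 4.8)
\emph{Proof proposal.} The statement comprises two claims: that each horizontal arrow is an isomorphism, and that the diagram commutes. The first is bookkeeping. The map $\flat^{n}$ is an isomorphism since $\Gamma$ is discrete and $G=G_{0}$ acts trivially on it (the Remark following Proposition \ref{prop:isomorphism_btw_SM_and_loc_is_natural_in_first_argument}); $Bi^{*}$ is an isomorphism since $i\from K\to G$, and hence $Bi\from BK\to BG$, is a homotopy equivalence (Remark \ref{rem:identifications_for_log_exact_sequence_restriction_inflation}); $p^{*}$, and thus $(p^{*})^{-1}$, is the isomorphism of Remark \ref{rem:identifications_for_log_exact_sequence_restriction_inflation}, valid in every degree; $D^{n}\from H^{n}_{\loc,s}((G,K);\fa)\to H^{n}_{\Lie}((\fg,\fk);\fa)$ is an isomorphism — this is the $\fa$-coefficient case already contained in the proof of Theorem \ref{thm:vanEstIsomorphism}, since $C^{n}_{\vE,s}(G/K,\fa)^{G}\hookrightarrow C^{n}_{\loc,s}((G,K),\fa)^{G}$ is a quasi-isomorphism ($G/K$ being contractible) and $D^{n}$ is a quasi-isomorphism on $C^{n}_{\vE,s}(G/K,\fa)^{G}$ by Guichardet, using that $K$ is connected so $H^{n}_{\Lie}((\fg,K);\fa)=H^{n}_{\Lie}((\fg,\fk);\fa)$; finally $\mu^{n}$ and $\nu^{n}$ are the isomorphisms of Remark \ref{rem:cartan_decomposition} and Remark \ref{rem:compact_dual}.

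For commutativity the plan is to recognise both ways around \eqref{eqn:characteristic_morphims_is_CW} as the Chern--Weil homomorphism of the principal $K$-bundle $G_{u}\to G_{u}/K$. Fix the $K$-equivariant projection $\pi_{\fk}\from\fg\to\fk$ along $\fp$; via the identification $\fg=\fg_{u}$ of Remark \ref{rem:cartan_decomposition} this is also a $K$-equivariant projection $\fg_{u}\to\fk$ along $i\fp$, and — $(\fg,\fk)$ and $(\fg_{u},\fk)$ being symmetric pairs — it serves as a ``connection''. It induces a Weil homomorphism $W_{(\fg,\fk)}\from (S\fk^{*})^{K}\to C^{\bullet}_{\CE}((\fg,\fk);\fa)$ (one checks the identity $W^{1}_{(\fg,\fk)}=d_{\CE}\circ\pi_{\fk}^{*}-\pi_{\fk}^{*}\circ d_{\CE}$ of the footnote, which holds because the $\fg$-action on $\fa$ is trivial), which in degree $2m$ sends $P\in (S^{m}\fk^{*})^{K}$ to the antisymmetrisation of $(v_{1},\dots,v_{2m})\mapsto P\bigl([v_{1},v_{2}],\dots,[v_{2m-1},v_{2m}]\bigr)$ on $\fp$, up to sign and normalisation. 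Since the relative Chevalley--Eilenberg differential vanishes on a symmetric pair, $W_{(\fg,\fk)}$ is automatically a chain map, and $\mu^{n}$ relates $(W_{(\fg,\fk)})_{*}$ to $(W_{(\fg_{u},\fk)})_{*}$ up to normalising signs (coming from $[\cdot,\cdot]_{u}=-[\cdot,\cdot]$ on the symmetric part) that we suppress here, since they are routine and cancel against the corresponding signs on the topological edge.

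The right (topological) edge is then handled by Chern--Weil theory of the compact dual: for the canonical connection on $G_{u}\to G_{u}/K$ the curvature is, up to sign and normalisation, the $\fk$-valued bracket on $i\fp$, so that $\nu^{n}\circ (W_{(\fg_{u},\fk)})_{*}$ applied to an invariant polynomial equals the pullback $f^{*}$ of the corresponding universal class, under the identification $H^{\bullet}_{\Top}(BK;\R)\cong(S\fk^{*})^{K}$ (cf.\ \cite[Chapter XI]{GreubHalperinVanstone76Connections-curvature-and-cohomology}); hence $f^{*}\circ j_{*}=\nu^{n}\circ(W_{(\fg_{u},\fk)})_{*}\circ j_{*}$ on $H^{n}_{\Top}(BK;\Gamma)$. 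Combining this with naturality of $\flat^{n}$ and $j_{*}$ and with the restriction isomorphism $i^{*}\from H^{n}_{\loc,s}(G;\Gamma)\cong H^{n}_{\loc,s}(K;\Gamma)\cong H^{n}_{\Top}(BK;\Gamma)$ of Remark \ref{rem:identifications_for_log_exact_sequence_restriction_inflation}, the top-and-right composite of \eqref{eqn:characteristic_morphims_is_CW} is identified with $\nu^{n}\circ\mu^{n}\circ(W_{(\fg,\fk)})_{*}$ precomposed with that restriction isomorphism and with coefficient push-forward along $j$.

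It remains to identify the left-and-bottom composite $\nu^{n}\circ\mu^{n}\circ D^{n}\circ(p^{*})^{-1}\circ\varepsilon^{n}$ with the same map, and this is the crux. Here I would factor $\varepsilon^{n}=j_{*}=(\fa^{\delta}\to\fa)_{*}\circ(\Gamma\hookrightarrow\fa^{\delta})_{*}$, observe that $(\Gamma\hookrightarrow\fa^{\delta})_{*}$ corresponds under $\flat^{n}$ to $j_{*}\from H^{n}_{\Top}(BG;\Gamma)\to H^{n}_{\Top}(BG;\fa)$, and recall (the discussion preceding Theorem \ref{thm:vanEstIsomorphism}) that the image of $(\fa^{\delta}\to\fa)_{*}$ is exactly the flat classes, so that one is reduced to showing that the composite $H^{n}_{\Top}(BG;\fa^{\delta})\cong H^{n}_{\loc,s}(G;\fa^{\delta})\xrightarrow{(\fa^{\delta}\to\fa)_{*}}H^{n}_{\loc,s}(G;\fa)\xrightarrow{D^{n}\circ(p^{*})^{-1}}H^{n}_{\Lie}((\fg,\fk);\fa)$, transported via $Bi^{*}$ on the source and via $\mu^{n}$ on the target, is $(W_{(\fg_{u},\fk)})_{*}$ on $H^{\bullet}_{\Top}(BK;\fa)$. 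This is a van Est/Bott-type computation: a locally smooth representative built from a singular cocycle on $BG$ that is locally constant near the identity, averaged over $K$ into the relative complex, differentiates under $D^{n}$ precisely to the curvature expression defining $W_{(\fg_{u},\fk)}$, the Cartan isomorphism $\mu^{n}$ being exactly what allows the comparison to be carried out on the compact dual (this is in the spirit of \cite{Bott73On-the-Chern-Weil-homomorphism-and-the-continuous-cohomology-of-Lie-groups}). Establishing this cochain-level identification — equivalently, that the flat-bundle characteristic map factors through the Chern--Weil homomorphism of $G_{u}\to G_{u}/K$ — is the main obstacle; granting it, both composites equal $\nu^{n}\circ\mu^{n}\circ(W_{(\fg,\fk)})_{*}\circ j_{*}$ and \eqref{eqn:characteristic_morphims_is_CW} commutes.
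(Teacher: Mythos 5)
Your reduction of the statement to a single assertion is accurate, but that assertion is precisely the content of the theorem, and you leave it unproven: you yourself call the identification of $D^{n}\circ(p^{*})^{-1}\circ(\fa^{\delta}\to\fa)_{*}$ (transported along $Bi^{*}$ and $\mu^{n}$) with the algebraic Weil homomorphism ``the main obstacle'' and only gesture at ``a van Est/Bott-type computation''. There is no actual argument there: a class in $H^{n}_{\Top}(BG;\fa^{\delta})$ does not come with a preferred locally smooth cocycle on $G$ that one could average over $K$ and differentiate, and the passage from a singular cocycle on $BG$ to a group cochain, then to the relative complex, and finally to $C^{n}_{\CE}((\fg,\fk),\fa)$ is exactly the nontrivial comparison the theorem is about. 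In the paper this step is the commutativity of diagram \eqref{eqn:proof_characteristic_morphims_is_CW_2}, and it is not obtained by a direct cochain computation: one first reduces to even degree $n=2m$ (via the factorisation of $\varepsilon^{n}$ through $H^{n}_{\Top}(BG;\fa)$ and Hopf's theorem, a point you also skip), then inserts the explicit inverse van Est map $I^{2m}$ of Guichardet/Dupont, restricts along a cocompact lattice $D\leq G$ (which exists by \cite{Borel63Compact-Clifford-Klein-forms-of-symmetric-spaces}), uses the injectivity of $B\iota^{*}\from H^{n}_{\vE,s}(G;\fa)\to H^{n}_{\op{gp}}(D;\fa)$ from \cite{Blanc85Cohomologie-differentiable-et-changement-de-groupes.}, and then invokes the precise comparison results of \cite{Dupont76Simplicial-de-Rham-cohomology-and-characteristic-classes-of-flat-bundles} (Corollary 1.3, Proposition 1.5, Lemma 4.6), or alternatively \cite[Theorem 9.12]{Dupont78Curvature-and-characteristic-classes}, to see that over the discrete group $D$ the flat characteristic classes agree with the image of the algebraic Chern--Weil classes under $I^{2m}$. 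Without this (or an equivalent substitute making your appeal to \cite{Bott73On-the-Chern-Weil-homomorphism-and-the-continuous-cohomology-of-Lie-groups} precise), the proof is incomplete.

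The remaining ingredients of your proposal are essentially the paper's: the isomorphism bookkeeping is correct; your $W_{(\fg,\fk)}$ is the paper's $\op{CW}^{m}_{(\fg,\fk)}$, and the compatibility of $\op{CW}^{m}_{(\fg,\fk)}$ with $\op{CW}^{m}_{(\fg_{u},\fk)}$ under $\mu^{n}$ (both depend only on the projections, with no extra signs to ``suppress'', since $\pi_{\fk}([\pi_{\fp}x,\pi_{\fp}y])=\pi_{\fk}([\pi_{i\fp}x,\pi_{i\fp}y]_{u})$ up to the stated sign convention built into the definition) and the identification $\nu^{n}\circ\op{CW}^{m}_{(\fg_{u},\fk)}=f^{*}\circ\wt{\op{CW}}^{m}$ via \cite[Section 8.26]{GreubHalperinVanstone76Connections-curvature-and-cohomology} are exactly the paper's diagrams following \eqref{eqn:proof_characteristic_morphims_is_CW_2}. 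So the structure is right, but the theorem's actual content --- the comparison of the topological side with the differentiated, locally smooth side --- is missing.
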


\begin{tabsection}
 Note that the cohomology of $G_{u}/K$ and the morphisms
 $f^{*}\from H^{n}_{\Top}(BK;\fa)\to H^{n}_{\Top}(G_{u}/K;\fa)$ are well understood, for
 instance for $\fa=\R$ and simple $G$ (see for instance
 \cite{GreubHalperinVanstone76Connections-curvature-and-cohomology,Mimura95Homotopy-theory-of-Lie-groups,FelixOpreaTanre08Algebraic-models-in-geometry}).
 We will list some examples and applications of the theorem in the next
 section.
\end{tabsection}

\begin{proof}
 That all horizontal morphisms are isomorphisms has been argued in the previous
 sections. We will deduce the commutativity of the diagram by establishing a
 sequence of commuting diagrams that will give
 \eqref{eqn:characteristic_morphims_is_CW} in the end. We first consider
 \begin{equation}\label{eqn:proof_characteristic_morphims_is_CW_1}
  \vcenter{  \xymatrix{
  H^{n}_{\Top}(BK;\Gamma)\ar[d]_{j_{*}} \ar[r]^{(Bi^{*})^{-1}} &  H^{n}_{\Top}(BG;\Gamma)\ar[r]^{\flat^{n}}\ar[d]^{j_{*}}  & H^{n}_{\loc,s}(G;\Gamma)\ar[d]^{j_{*}}\\
  H^{n}_{\Top}(BK;\fa)  \ar[r]^{(Bi^{*})^{-1}} & H^{n}_{\Top}(BG;\fa) \ar[r]^{\flat^{n}} & H^{n}_{\loc,s}(G;\fa)
  }},
 \end{equation}
 which commutes by the naturality of $Bi^{*}$ and $\flat^{n}$. From this it
 follows that $j_{*}\from  H^{n}_{\loc,s}(G;\Gamma)\to H^{n}_{\loc,s}(G;\fa)$
 factors through $H^{n}_{\Top}(BG,\fa)$ and thus vanishes (by Hopf's Theorem)
 if $n$ is odd. Since
 $j_{*}\from H^{n}_{\Top}(BK,\Gamma)\to H^{n}_{\Top}(BK,\fa)$ vanishes for $n$
 odd for the same reason it suffices to show the commutativity of
 \eqref{eqn:characteristic_morphims_is_CW} if $n=2m$ is even.\\
 
 We now consider the algebraic Chern-Weil homomorphism
 \begin{equation*}
  \op{CW}^{m}_{(\fg,\fk)}\from \Hom_{\R}(S^{m}\fk;\fa)^{\fk}\to H^{2m}_{\Lie}((\fg,\fk);\fa),
 \end{equation*}
 which is defined as follows (cf.\
 \cite{GreubHalperinVanstone76Connections-curvature-and-cohomology}). Let
 $\fg=\fk\oplus\fp$ be the Cartan decomposition of $\fg$ and let
 $\pi_{\fp}\from \fg\to\fp$ and $\pi_{\fk}\from \fg\to \fk$ be the
 corresponding projections. Note that both are morphisms of $\fk$-modules. Then
 we set
 \begin{equation*}
  \op{CW}^{1}_{(\fg,\fk)}:=\frac{1}{2}\pi_{\fp}^{*}\circ d_{\CE} \circ \pi_{\fk}^{*}, \text{ i.e., }\quad \op{CW}^{1}_{(\fg,\fk)}(\lambda)(x,y):=\frac{1}{2}\lambda([\pi_{\fp}(x),\pi_{\fp}(y)])
 \end{equation*}
 Then $\op{CW}^{1}_{(\fg,\fk)}(\lambda)$ is $\fk$-invariant and clearly
 satisfies $i_{y}(\op{CW}^{1}_{(\fg,\fk)}(\lambda))=0$ for all $y\in \fk$. Thus
 we have
 \begin{equation}\label{eqn:proof_characteristic_morphims_is_CW_5}
  d_{\CE}\circ \op{CW}^{1}_{(\fg,\fk)}=\pi_{\fp}^{*} \circ d_{\CE} \circ \op{CW}^{1}_{(\fg,\fk)}= \pi_{\fp}^{*} \circ d_{\CE} \circ (d_{\CE}\circ \pi_{\fk}^{*}-\pi_{\fk}^{*}\circ d_{\CE})=- \pi_{\fp}^{*} \circ d_{\CE} \circ \pi_{\fk}^{*}\circ d_{\CE} =0
 \end{equation}
 where
 $\op{CW}^{1}_{(\fg,\fk)}=d_{\CE}\circ \pi_{\fk}^{*}-\pi_{\fk}^{*}\circ d_{\CE}$
 follows from
 \begin{equation*}
  \lambda(\pi_{\fk}([\pi_{\fp}(x),\pi_{\fp}(y)]))=
  \lambda(\pi_{\fk}([x-\pi_{\fk}(x),y-\pi_{\fk}(y)]))=\lambda(\pi_{\fk}([x,y]))-\lambda([\pi_{\fk}(x),\pi_{\fk}(y)])
 \end{equation*}
 and the last identity in \eqref{eqn:proof_characteristic_morphims_is_CW_5}
 follows from the fact that $d_{\CE}$ preserves $\im(\pi_{\fk}^{*})$. Thus
 $\op{CW}^{1}_{(\fg,\fk)}(\lambda)$ is closed and hence represents a class in
 $H^{2}_{\Lie}((\fg,\fk);\fa)$. Since $H^{\op{even}}_{\Lie}((\fg,\fk);\fa)$ is
 commutative the case $m=1$ determines a unique morphism of algebras
 $\op{CW}^{*}_{(\fg,\fk)}\from \Hom_{\R} (S^{*}\fk,\fa)^{\fk} \to H^{2*}_{\Lie}((\fg,\fk);\fa)$.\\
 
 The algebraic Chern-Weil homomorphism, together with the universal Chern-Weil
 isomorphism
 \begin{equation*}
  \wt{\op{CW}}^{m}\from \Hom(S^{m}\fk;\fa)^{\fk}\xrightarrow{\cong} H^{2m}_{\Top}(BK;\fa)
 \end{equation*}
 now give rise to a diagram
 \begin{equation}\label{eqn:proof_characteristic_morphims_is_CW_2}
  \vcenter{  \xymatrix{
  H^{2m}_{\Top}(BK;\fa) \ar[r]^{(Bi^{*})^{-1}}& H_{\Top}^{2m}(BG;\fa) \ar[r]^{\flat^{2m}} & H^{2m}_{\loc,s}(G,\fa)& H^{2m}_{\loc,s}((G,K);\fa) \ar[l]_{p^{*}} \\
  \Hom_{\R} (S^{m}\fk,\fa)^{\fk} \ar[rrr]^{\op{CW}^{m}_{(\fg,\fk)}}\ar[u]^{\wt{\op{CW}}^{m}} &&& H^{2m}_{\Lie}((\fg,\fk);\fa)\ar[u]_{(D^{2m})^{-1}}
  }}.
 \end{equation}
 We claim that this diagram commutes as well. To this end, let
 \begin{equation*}
  I^{2m}\from H^{2m}_{\Lie}((\fg,\fk);\fa)\to H^{2m}_{\vE,s}(G/K;\fa)
 \end{equation*}
 be the inverse of the van Est isomorphism, as described explicitly in
 \cite[n\textsuperscript{o}
 III.7.3]{Guichardet80Cohomologie-des-groupes-topologiques-et-des-algebres-de-Lie}
 or in \cite[Proposition
 1.5]{Dupont76Simplicial-de-Rham-cohomology-and-characteristic-classes-of-flat-bundles}.
 This has the property that
 \begin{equation}
  \xymatrix{H^{2m}_{\Lie}((\fg,\fk);\fa)\ar[r]^{I^{2m}} & H^{2m}_{\vE,s}(G/K;\fa)\ar[r]^{\cong}&H^{2m}_{\loc,s}((G,K);\fa)
  \ar@/^{2em}/[ll]_{D^{2m}}
  }
 \end{equation}
 commutes, where the unlabelled isomorphism is induced by the inclusion
 $C^{n}_{\vE,s}(G/K,\fa)\hookrightarrow C^{n}_{\loc,s}((G,K),\fa)$ of chain
 complexes (cf.\ \cite[Section
 7]{Fuchssteiner11A-Spectral-Sequence-Connecting-Continuous-With-Locally-Continuous-Group-Cohomology}).
 Now let $D\leq G$ be a cocompact lattice in $G$ (which exists by \cite[Theorem
 C]{Borel63Compact-Clifford-Klein-forms-of-symmetric-spaces}) and let
 $\iota\from D\to G$ denote the inclusion. Then the restriction
 $B \iota^{*}\from H^{n}_{\vE,s}(G;\fa)\to H^{n}_{\vE}(D;\fa)= H^{n}_{\op{gp}}(D,\fa)$
 is injective by \cite[15\textsuperscript{o}
 Th\'eor\`eme]{Blanc85Cohomologie-differentiable-et-changement-de-groupes.}.
 Thus we have the commuting diagram
 \begin{equation*}
  \vcenter{  \xymatrix{
  H_{\Top}^{2m}(BK;\fa) \ar[r]^{(Bi^{*})^{-1}} & H_{\Top}^{2m}(BG;\fa) \ar[r]^{\flat^{2m}}\ar[d]^{B \iota^{*}} & H^{2m}_{\loc,s}(G,\fa) \ar[d]^{B \iota^{*}} &&H^{2m}_{\loc,s}((G,K);\fa)\ar[ll]_{p^{*}} \\
  & H_{\Top}^{2m}(BD;\fa) \ar[r]^{\flat^{2m}} & H^{2m}_{\op{gp}}(D;\fa)
  & H^{2m}_{\vE,s}(G;\fa)\ar[l]_{B \iota^{*}} & H^{2m}_{\vE,s}(G/K;\fa)\ar[l]_{p^{*}}\ar[u]_{\cong}\\
  \Hom_{\R} (S^{m}\fk,\fa)^{\fk} \ar[rrrr]^{\op{CW}^{m}_{(\fg,\fk)}}\ar[uu]^{\wt{\op{CW}}^{m}} &&&& H^{2m}_{\Lie}((\fg,\fk);\fa)\ar[u]_{I^{2m}}
  }}.
 \end{equation*}
 Since $\flat^{2m}\from H^{2m}_{\Top}(BD;\fa)\to H^{2m}_{\op{gp}}(D;\fa)$ is
 just the isomorphism between the cohomology of the classifying space and the
 bar resolution for the discrete group $D$, the inner diagram commutes by
 \cite[Corollary 1.3, Proposition 1.5 and Lemma
 4.6]{Dupont76Simplicial-de-Rham-cohomology-and-characteristic-classes-of-flat-bundles}\footnote{One
 can also argue without using a cocompact lattice by \cite[Theorem
 9.12]{Dupont78Curvature-and-characteristic-classes}, but then one needs to
 assume that $\xi^{n}$ is injective, which follows in
 \cite[16\textsuperscript{o}
 Th\'eor\`eme]{Blanc85Cohomologie-differentiable-et-changement-de-groupes.}
 from the existence of a cocompact lattice.}. Since
 $B \iota^{*}\from H^{2m}_{\loc,s}(G;\fa)\to H^{2m}_{\op{gp}}(D;\fa)$ is
 injective we thus conclude that the outer diagram, and thus diagram
 \eqref{eqn:proof_characteristic_morphims_is_CW_2}, commutes.\\
 
 We now consider the algebraic Chern-Weil homomorphism
 $\op{CW}^{m}_{(\fg_{u},\fk)}$ for $\fg_{u}$ with respect to the decomposition
 $\fg_{u}=\fk\oplus i\fp$. Note that the underlying vector spaces of $\fg$ and
 $\fg_{u}$ are the \emph{same}, as well as the subspaces $\fp$ and $i\fp$.
 Since the cochains representing $\op{CW}^{m}_{(\fg,\fk)}(\lambda)$ and
 $\op{CW}^{m}_{(\fg_{u},\fk)}(\lambda)$ only depend on the projections onto
 $\fk$ and $i\fp$ (respectively $\fk$ and $\fp$) we conclude that the diagram
 \begin{equation}\label{eqn:proof_characteristic_morphims_is_CW_3}
  \vcenter{  \xymatrix{
  \Hom_{\R} (S^{n}\fk;\fa)^{\fk} \ar@{=}[d]\ar[rr]^{\op{CW}^{n}_{(\fg,\fk)}} && H^{2n}_{\Lie}((\fg,\fk);\fa)\ar[d]^{\mu}_{\cong}\\
  \Hom_{\R} (S^{n}\fk,\fa)^{\fk} \ar[rr]^{{  \op{CW}^{n}_{(\fg_{u},\fk)}}}  && H^{2n}_{\Lie}((\fg_{u},\fk);\fa)
  }}.
 \end{equation}
 commutes.
 
 Since the Chern-Weil homomorphism $\op{CW}_{\pi}$ for the bundle
 $\pi\from G_{u}\to G_{u}/ K$ factors through the universal Chern-Weil
 isomorphism and $f^{*}$ we obtain by \cite[Section
 8.26]{GreubHalperinVanstone76Connections-curvature-and-cohomology} the
 commuting diagram
 \begin{equation}\label{eqn:proof_characteristic_morphims_is_CW_4}
  \vcenter{  \xymatrix{
  \Hom_{\R} (S^{n}\fk,\fa)^{\fk} \ar[rrr]^{{  \op{CW}^{n}_{(\fg_{u},\fk)}}} \ar[d]^{\wt{\op{CW}}^{n}}_{\cong}\ar[drrr]^{\op{CW}_{\pi}^{n}} &&& H^{2n}_{\Lie}((\fg_{u},\fk);\fa)\ar[d]^{\nu}_{\cong}\\
  H^{2n}_{\Top}(BK;\fa)\ar[rrr]^{f^{*}} &&& H^{2n}_{\Top}(G_{u}/K;\fa)
  }}.
 \end{equation}
 If we paste the above diagrams
 \eqref{eqn:proof_characteristic_morphims_is_CW_1},
 \eqref{eqn:proof_characteristic_morphims_is_CW_2},
 \eqref{eqn:proof_characteristic_morphims_is_CW_3} and
 \eqref{eqn:proof_characteristic_morphims_is_CW_4}, then the outer diagram
 yields precisely \eqref{eqn:characteristic_morphims_is_CW}. This finishes the
 proof.
\end{proof}

\begin{remark}\label{rem:injectivity_of_j_star}
 Note that $j_{*}\from H^{n}_{\Top}(BK;\Gamma)\to H^{n}_{\Top}(BK;\fa)$ is very
 well-behaved in this particular case. If $H_{\op{odd}}(BK;\Gamma)$ is finitely
 generated and torsion free, then we get
 \begin{equation*}
  \xymatrix{
  H_{\Top}^{n}(BK;\Gamma)\ar[r]^(.4){\cong}\ar[d] & \Hom(H_{n}(BK);\Gamma)\ar[d]\\
  H_{\Top}^{n}(BK;\fa)\ar[r]^(.4){\cong} & \Hom(H_{n}(BK);\fa)
  }
 \end{equation*}
 from the Universal Coefficient Theorem. Thus $j_{*}$ is injective in this
 case. If, moreover, $\fa$ is separable and $\Gamma$ is countable, then it is
 free \cite[Remark 9.5
 (c)]{Neeb02Central-extensions-of-infinite-dimensional-Lie-groups}, and thus
 $\Hom(H^{n}(BK);\Gamma)$ injects into $\Hom(H^{n}(BK);\fa)$. If $\fa=\R^{n}$
 and $\Gamma$ is a lattice in $\R^{n}$, then so is $\Hom(H^{n}(BK);\Gamma)$ in
 $\Hom(H^{n}(BK);\R^{n})$ and a basis for $\Gamma$ then gives a basis for
 $\Hom(H^{n}(BK);\Gamma)$. All the above assumptions are in particular
 satisfied for $\fa=\R$, $\Gamma=\Z$ and
 $K=\op{U}_{q},\SU_{q}, \Sp_{q}, \SO_{q}$ (see \cite[Theorem
 5.5.10]{Spanier66Algebraic-topology}, \cite[Theorem
 16.17]{Switzer75Algebraic-topology---homotopy-and-homology}, \cite[Corollary
 III.3.11]{MimuraToda91Topology-of-Lie-groups.-I-II} and
 \cite{Brown82The-cohomology-of-Brm-SOn-and-Brm-On-with-integer-coefficients}).
\end{remark}

\section{Examples} %
\label{sec:examples}

\begin{tabsection}
 We will stick in this section to examples of simple linear Lie groups and the
 trivial coefficient modules $\Z$, $\R$ and $U(1)=\R/\Z$. We will calculate the
 characteristic homomorphisms
 $\varepsilon^{n}\from H^{n}_{\loc}(G;\Z)\to H^{n}_{\loc}(G;\R)$ via the
 commuting diagram
 \begin{equation*}
  \xymatrix{
  H^{n}_{\loc}(G;\Z)\ar[rr]^{\varepsilon^{n}=j_{*}}\ar[d]^{\cong} & & H^{n}_{\loc}(G;\R) \ar[d]^{\cong}\\
  H_{\Top}^{n}(BK;\Z)\ar[r]^{j_{*}} & H_{\Top}^{n}(BK;\R)\ar[r]^{f^{*}} & H_{\Top}^{n}(G_{u}/K;\R)
  }
 \end{equation*}
 from Theorem \ref{thm:characteristic_morphims_is_CW} (and identify
 $\varepsilon^{n}$ with $f^{*} \circ j_{*}$ by this). This will then give
 complete information on $H^{n}_{\loc}(G;U(1))$ (as abelian group) via the long
 exact sequence from Section \ref{sec:the_long_exact_sequence}.
 
 To this end we first recall the following facts from \cite[Section
 11.5]{GreubHalperinVanstone76Connections-curvature-and-cohomology} on the
 cohomology of a homogeneous space $G/H$ of a general compact connected Lie
 group $G$ with closed and connected subgroup $H$. Let $\pi\from G\to G/H$
 denote the corresponding principal $H$-bundle, and let $f\from G/H\to BH$ be a
 classifying map for $\pi$. Then there is an isomorphism
 \begin{equation*}
  \Phi\from   H_{\Top}^{*}(G/H;\R)\xrightarrow{\cong}  A_{\pi} \otimes \Lambda \wh{P}_{\pi}
 \end{equation*}
 of graded algebras, where
 $A_{\pi}:=\op{CW}^{*}_{\pi}(\Hom_{\R}(S^{*}\fh,\R)^{\fh})=\im(f^{*}\from H_{\Top}^{*}(BH;\R)\to H_{\Top}^{*}(G/H;\R))$
 is the image of the Chern-Weil homomorphism of $\pi$ and
 $\wh{P}_{\pi}:= P_{G}\cap \im(\pi^{*}\from H_{\Top}^{*}(G/H;\R)\to H_{\Top}^{*}(G;\R))$ for
 $P_{G}\leq H_{\Top}^{*}(G;\R)$ the graded subspace of primitive elements. Moreover,
 $\Phi$ makes the diagram
 \begin{equation*}
  \xymatrix{
  A_{\pi}\ar[r]  &  A_{\pi} \otimes \Lambda \wh{P}_{\pi}\ar[r] & H_{\Top}^{2*}(G;\R)\ar@{=}[d]\\
  \Hom_{\R}(S^{*}\fh,\R)^{\fh} \ar[u]_{\op{CW}_{\pi}^{*}} \ar[d]^{\wt{\op{CW}}^{*}} \ar[r] ^{\op{CW}_{\pi}^{*}} &  H_{\Top}^{2*}(G/H;\R)\ar@{=}[d]\ar[u]_{\Phi} \ar[r]^{\pi^{*}} & H_{\Top}^{2*}(G;\R)\\
  H_{\Top}^{2*}(BH;\R)\ar[r]^{f^{*}} &  H_{\Top}^{2*}(G/H;\R)
  }
 \end{equation*}
 commute.

\end{tabsection}

\begin{example}[$G=SL_{p}(\bC)$]
 Then $K=\SU_{p}$ and $G_{u}=\SU_{p}\times \SU_{p}$. By Example \ref{exmp:ncz}
 \ref{exmp:ncz2} we have that $\fk\leq \fg$ is n.c.z.\ for all $\fa$ and thus
 Proposition \ref{prop:Hloc_as_abelian_group} yields
 \begin{equation*}
  H^{n}(\SL_{p}(\bC),U(1))\cong H^{n}_{\Lie}((\mf{sl}_{p}(\bC),\mf{su}_{p}),\R)\oplus H_{\Top}^{n}(B\SU_{p};\Z)\cong H^{n}_{\Top}(\SU_{p};\R)\oplus H_{\Top}^{n}(B\SU_{p};\Z)
 \end{equation*}
 for each $n\geq 1$ (cf.\ Remarks \ref{rem:cartan_decomposition} and
 \ref{rem:compact_dual}), and the groups on the right are well-known (see for
 instance \cite[Corollary 1.86]{FelixOpreaTanre08Algebraic-models-in-geometry}
 and \cite[Corollary 4D.3]{Hatcher02Algebraic-topology}).
\end{example}

\begin{tabsection}
 We now run through some interesting and illustrative cases in which $G$ is a
 connected non-compact real form of a simple complex Lie group, $K$ is the
 maximal compact of $G$ and $G_{u}$ the maximal compact of the complexification
 $G_{\bC}$ (see \cite[Chapter
 X]{Helgason78Differential-geometry-Lie-groups-and-symmetric-spaces} for
 notation and details).
\end{tabsection}

\begin{example}[$G=\SL_{p}(\R)$]
 Then $K=\SO_{p}$ and $G_{u}=\SU_{p}$. The case $p=2q+1\geq 3$ has been treated
 in Example \ref{exmp:n.c.z.}. If $p=2q\geq 4$ is even, then
 \begin{equation*}
  H_{\Top}^{*}(BK;\R)\cong   \Hom_{\R}(S^{*}\mf{so}_{2q};	\R)^{\mf{so}_{2q}}=\Lambda(P_{1},...,P_{q-1},E_{q})
 \end{equation*}
 is generated by the Pontryagin classes $P_{1},...,P_{q-1}$, where $P_{i}$ is
 of degree $4i$, and the Euler class $E_{q}$ of degree $2q$. Moreover,
 $E_{q}^{2}=P_{q}$ \cite[Theorem 15.9]{MilnorStasheff74Characteristic-classes}.
 We now consider the kernel of $\op{CW}_{\pi}^{*}$. By \cite[Proposition
 10.6.III]{GreubHalperinVanstone76Connections-curvature-and-cohomology} it is
 generated (as an algebra without unit) by the image of
 $i^{*}\from \Hom_{\R}(S^{*}\mf{su}_{2q},\R)\to \Hom_{\R}(S^{*}\mf{so}_{2q},\R)$.
 From \cite[Example
 11.11.4]{GreubHalperinVanstone76Connections-curvature-and-cohomology} we get
 \begin{equation*}
  i^{*}(C_{i})= \begin{cases}
  0 & \text{ if }i \text{ odd}\\
  (-1)^{i/2}P_{i/2} & \text{ if }i \text{ even}
  \end{cases}
 \end{equation*}
 where $C_{2},...,C_{2q}$ denote the Chern classes ($C_{1}$ is missing since we
 consider $\SU_{p}$, not $\op{U}_{p}$).
 Thus we have $A_{\pi}=\R[E_{q}]/(E_{q}^{2})$ and by \cite[Proposition
 10.26.VII]{GreubHalperinVanstone76Connections-curvature-and-cohomology} we
 have that $\ker(i^{*})\cong\wh{P}_{\pi}$ is generated by the suspensions of
 the odd Chern classes $\wt{C}_{3},\wt{C}_{5},...,\wt{C}_{2q-1}$ with
 $\wt{C}_{i}\in H_{\Top}^{2i-1}(\SU_{p};\R)$. Consequently,
 \begin{equation*}
  H_{\Top}^{*}(\SU_{p}/\SO_{p};\R)\cong \big(\R[E_{q}]/(E_{q}^{2})\big) \otimes \Lambda (\wt{C}_{3},\wt{C}_{5},...,\wt{C}_{2q-1}).
 \end{equation*}
 In particular,
 $f^{*}\from H_{\Top}^{2q}(B\SO_{2q};\R)\to H_{\Top}^{2q}(\SU_{2q}/\SO_{2q};\R)$
 does not vanish on the Euler class. By Remark \ref{rem:injectivity_of_j_star},
 $j_{*}\from H_{\Top}^{n}(B\SO_{2q};\Z)\to H_{\Top}^{n}(B\SO_{2q};\R)$ is
 injective. Thus the characteristic homomorphism
 \begin{equation*}
  \varepsilon^{2q}\from H^{2q}(\SL_{2q}(\R);\Z)\to H^{2q}(\SL_{2q}(\R);\R)
 \end{equation*}
 does not vanish on the integral Euler class and
 $0\neq \varepsilon^{2q}(E_{q})\in H^{2q}(\SL_{2q}(\R);\R)$ is flat (cf.\
 Remark \ref{rem:flat_characteristic_classes}).
\end{example}

\begin{example}[$G=\SU^{*}_{2p}$]
 Then $K=\Sp_{p}$ and $G_{u}=\SU_{2p}$. From \cite[Theorem
 III.6.7]{MimuraToda91Topology-of-Lie-groups.-I-II} one sees that
 $H_{\Top}^{*}(G_{u}/K;\R)$ is generated by $1$ and elements of odd degree.
 Thus $\mf{sp}_{p}$ is n.c.z.\ in $\mf{su}_{2p}$ by Example \ref{exmp:ncz}
 \ref{exmp:ncz3}. Consequently, all characteristic homomorphisms
 \begin{equation*}
  \varepsilon^{n}\from H^{n}(\SU^{*}_{2p};\Z)\to H^{n}(\SU^{*}_{2p};\R)
 \end{equation*}
 vanish and we have the description of $H_{\Top}^{n}(\SU^{*}_{2p};U(1))$ from
 \eqref{eqn:Hloc_as_abelian_group_split_exact_seq}.
\end{example}

\begin{example}[$G=\Sp_{p}(\R)$]
 Then $K=\op{U}_{p}$ and $G_{u}=\Sp_{p}$. From \cite[Theorem III.6.9
 (1)]{MimuraToda91Topology-of-Lie-groups.-I-II} one sees that
 $H_{\Top}^{*}(G_{u}/K;\R)$ is evenly graded and that
 \begin{equation*}
  f^{*}\from H_{\Top}^{*}(B\op{U}_{p};\R)\to H_{\Top}^{*}(\Sp_{p}/\op{U}_{p};\R)
 \end{equation*}
 is surjective with kernel generated by the alternating products
 $\{\sum_{i+j=2k}(-1)^{i}C_{i}C_{j}\mid k\geq 1\}$ of the Chern classes. By
 Remark \ref{rem:injectivity_of_j_star},
 $j_{*}\from H_{\Top}^{n}(B\op{U}_{p};\Z)\to H_{\Top}^{n}(B\op{U}_{p};\R)$ is
 injective. Thus the characteristic homomorphism
 \begin{equation*}
  \varepsilon^{n}\from H^{n}(\Sp_{p}(\R);\Z) \to H^{n}(\Sp_{p}(\R);\R)
 \end{equation*}
 has as kernel precisely the integral linear combinations of the alternating
 products $\sum_{i+j=2k}(-1)^{i}C_{i}C_{j}$ of the \emph{integral} Chern
 classes (for $k\geq 1$). In particular,
 $0\neq \varepsilon^{2n}(C_{n})\in H^{2n}(\Sp_{p}(\R);\R)$ is flat (cf.\ Remark
 \ref{rem:flat_characteristic_classes}).
 
 With \cite[Theorem III.6.9 (2+3)]{MimuraToda91Topology-of-Lie-groups.-I-II}, a
 similar argument also applies to $G=\SU_{(p,q)}$ and $G=\Sp_{(p,q)}$.
\end{example}

\begin{remark}
 The results on the non-vanishing of the characteristic morphisms on the Euler
 class or the Chern classes are a refinement of some well-known identities in
 the abstract group cohomology $H^{n}_{\op{gp}}(G;\R)$ for $G=\SL_{p}(\R)$ and
 $G=\Sp_{n}(\R)$ (see
 \cite{Milnor58On-the-existence-of-a-connection-with-curvature-zero} and
 \cite[Section 9]{Dupont78Curvature-and-characteristic-classes}). What follows
 from re results of this paper is that these classes do not only live in
 $H^{n}_{\op{gp}}(G;\R)$, but that they lift to the topological group
 cohomology $H^{n}(G;\R)$.
\end{remark}

 We end this section with establishing the following stability result.

\begin{proposition}
 The natural homomorphisms $\SL_{p}(\bC)\to \SL_{p+1}(\bC)$ and
 $\SL_{p}(\R)\to \SL_{p+1}(\R)$ induce isomorphisms
 \begin{equation*}
  H^{n}(\SL_{p+1}(\bC);U(1))\xrightarrow{~\cong~} H^{n}(\SL_{p}(\bC);U(1))
  \quad\text{ and }\quad  H^{n}(\SL_{p+1}(\R);U(1))\xrightarrow{~\cong~} H^{n}(\SL_{p}(\R);U(1))
 \end{equation*}
 for sufficiently large $p$.
\end{proposition}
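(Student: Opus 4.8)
\section*{Proof proposal}

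The plan is to run the long exact sequence \eqref{eqn:long_exact_sequence_2} (with $\fa=\R$, $\Gamma=\Z$ and $A=U(1)=\R/\Z$) for all the groups simultaneously, to compare these sequences along the standard block inclusions $\iota_{p}\from\SL_{p}\to\SL_{p+1}$, and to reduce the statement to classical homological stability via the five lemma. Write $G^{(p)}$ for $\SL_{p}(\R)$ (resp.\ $\SL_{p}(\bC)$), $K^{(p)}$ for the standard maximal compact $\SO_{p}$ (resp.\ $\SU_{p}$), and recall from Section \ref{sec:examples} that the associated compact dual symmetric space is $\SU_{p}/\SO_{p}$ (resp.\ $(\SU_{p}\times\SU_{p})/\SU_{p}\cong\SU_{p}$). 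Each $G^{(p)}$ is connected with semi-simple Lie algebra, so the results of Sections \ref{sec:the_relation_to_relative_lie_algebra_cohomology}--\ref{sec:semi_simple_lie_groups} apply; moreover, for the block inclusions the maximal compacts and the Cartan decompositions $\fg^{(p)}=\fk^{(p)}\oplus\fp^{(p)}$ are compatible. The identifications of Theorem \ref{thm:characteristic_morphims_is_CW} are built out of natural pieces: the restriction $i^{*}$, the isomorphism $\flat^{n}$ (natural in the group by the Remark following Proposition \ref{prop:isomorphism_btw_SM_and_loc_is_natural_in_first_argument}), the classifying-space map $(Bi)^{*}$, the comparison $p^{*}$, the differentiation $D^{n}$, the isomorphism $\mu^{n}$ of Remark \ref{rem:cartan_decomposition} (natural since the Cartan decompositions are compatible) and $\nu^{n}$ of Remark \ref{rem:compact_dual}. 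Hence $H^{n}(G^{(p)};\Z)\cong H^{n}_{\Top}(BK^{(p)};\Z)$ and $H^{n}(G^{(p)};\R)\cong H^{n}_{\Top}(\SU_{p}/\SO_{p};\R)$ (resp.\ $H^{n}_{\Top}(\SU_{p};\R)$), together with the long exact sequence \eqref{eqn:long_exact_sequence_2}, fit into a commuting ladder of long exact sequences whose vertical maps are $\iota_{p}^{*}$ on one side and, on the other, the pullbacks along $B\SO_{p}\to B\SO_{p+1}$ (resp.\ $B\SU_{p}\to B\SU_{p+1}$) and along $\SU_{p}/\SO_{p}\to\SU_{p+1}/\SO_{p+1}$ (resp.\ $\SU_{p}\to\SU_{p+1}$).

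Now I would invoke homological stability. The maps $\SO_{p}\to\SO_{p+1}$, $\SU_{p}\to\SU_{p+1}$ and $\SU_{p}/\SO_{p}\to\SU_{p+1}/\SO_{p+1}$ all have connectivity growing with $p$, and the same holds for the induced maps of classifying spaces; for $B\SU_{p}$, $\SU_{p}$ and $\SU_{p}/\SO_{p}$ with real coefficients this is immediate from the explicit cohomology recalled in Section \ref{sec:examples}, and for $B\SO_{p}$ with integral coefficients it follows from the connectivity of $\SO_{p}\to\SO_{p+1}$. Hence, for each fixed $n$, all four outer terms of the ladder, namely the $\Z$- and $\R$-cohomologies in degrees $n$ and $n+1$, have isomorphic vertical maps once $p$ is large enough. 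Applying the five lemma then shows that $\iota_{p}^{*}\from H^{n}(\SL_{p+1}(\R);U(1))\to H^{n}(\SL_{p}(\R);U(1))$, and likewise over $\bC$, is an isomorphism for all sufficiently large $p$ (the bound depending on $n$).

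The main obstacle is the naturality bookkeeping in the first step: one must choose maximal compacts, Cartan decompositions and compact duals coherently for all $p$ and check that the chain of isomorphisms identifying $H^{n}(G^{(p)};-)$ with the topological cohomologies, as well as the long exact sequence itself, are genuinely compatible with $\iota_{p}$. Each constituent naturality statement is elementary (functoriality of restriction, of classifying spaces and of the van Est map $D^{n}$, together with naturality of $\flat^{n}$, $\mu^{n}$ and $\nu^{n}$), but assembling them into a single commuting ladder takes some care; once this is in place, the homological-stability input and the five lemma are entirely standard.
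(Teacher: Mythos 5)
Your proposal is correct and follows essentially the same route as the paper: identify the $\Z$- and $\R$-terms of the long exact sequence \eqref{eqn:long_exact_sequence_2} with $H^{n}_{\Top}(BK^{(p)};\Z)$ and $H^{n}_{\Top}(G_{u}^{(p)}/K^{(p)};\R)$, use stability of these (the paper cites the explicit computations of Brown and Mimura--Toda where you invoke growing connectivity of the inclusions, a cosmetic difference), and conclude by the five lemma. The naturality bookkeeping you flag as the main obstacle is indeed the only point the paper leaves implicit.
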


\begin{proof}
 From the descriptions of
 \begin{equation*}
  H^{n}_{\Top}(B \SO_{p};\Z)\cong H^{n}_{\Top}(B \SL_{p}(\R);\Z)\quad\text{ and }\quad   H^{n}_{\Top}(\SU_{p}/\SO_{p};\R)\cong H^{n}_{\vE}(\SL_{p}(\R);\R)
 \end{equation*}
 in
 \cite{Brown82The-cohomology-of-Brm-SOn-and-Brm-On-with-integer-coefficients}
 and \cite[Theorem III.6.7 (2)]{MimuraToda91Topology-of-Lie-groups.-I-II} on
 sees that $\SL_{p}(\R)\to \SL_{p+1}(\R)$ induces an isomorphism for
 sufficiently big $p$. Thus the same holds for $H^{n}(\SL_{p}(\R);U(1))$ by the
 long exact sequence \eqref{eqn:long_exact_sequence_2} from Remark
 \ref{rem:long_exact_sequence} and the Five Lemma. The argument for
 $\SL_{p}(\bC)$ is exactly the same (cf.\ \cite[Corollary III.3.11 and Theorem
 III.5.5]{MimuraToda91Topology-of-Lie-groups.-I-II}).
\end{proof}

\def\polhk#1{\setbox0=\hbox{#1}{\ooalign{\hidewidth
  \lower1.5ex\hbox{`}\hidewidth\crcr\unhbox0}}} \def\cprime{$'$}

\end{document}